\let\pa=\partial
\let\p=\partial
\let\ve=\varepsilon
\let\e=\epsilon
\let\f=\frac
\let\D=\Delta
\let\wt=\widetilde
\def\na{\nabla}
\def\th{\theta}
\def\dive{\mathop{\rm div}\nolimits}
\def\curl{\mathop{\rm curl}\nolimits}
\def\cA{{\mathcal A}}
\def\cB{{\mathcal B}}
\def\cD{{\mathcal D}}
\def\cO{{\mathcal O}}
\def\cM{{\mathcal M}}
\def\dH{\dot{H}}
\def\eqdef{\buildrel\hbox{\footnotesize def}\over =}
\def\eqdefa{\buildrel\hbox{\footnotesize def}\over =}
\def\N{\mathop{\mathbb N\kern 0pt}\nolimits}
\def\Q{\mathop{\mathbb Q\kern 0pt}\nolimits}
\def\R{\mathop{\mathbb R\kern 0pt}\nolimits}
\def\Z{\mathop{\mathbb Z\kern 0pt}\nolimits}
\newcommand{\beq}{\begin{equation}}
\newcommand{\eeq}{\end{equation}}
\newcommand{\ben}{\begin{eqnarray}}
\newcommand{\een}{\end{eqnarray}}
\newcommand{\beno}{\begin{eqnarray*}}
\newcommand{\eeno}{\end{eqnarray*}}
\newcommand{\andf}{\quad\hbox{and}\quad}
\def\ot{\omega^\theta}
\def\wtot{\wt{\omega}^\theta}
\def\ur{u^r}
\def\ut{u^\theta}
\def\uz{u^z}
\def\vr{v^r}
\def\vt{v^\theta}
\def\vz{v^z}
\def\barur{\bar{u}^r}
\def\barut{\bar{u}^\theta}
\def\baruz{\bar{u}^z}
\def\wtur{\wt{u}^r}
\def\wtuz{\wt{u}^z}
\def\dom{\vv\delta}
\def\du{\vv h}
\newtheorem{theorem}{Theorem}[section]
\newtheorem{lemma}{Lemma}[section]
\newtheorem{proposition}{Proposition}[section]
\newtheorem{rmk}{Remark}[section]
\newcommand{\vv}[1]{\boldsymbol{#1}}
\numberwithin{equation}{section}
\begin{document}
\title[Almost axisymmetric Navier-Stokes equations]
{On the existence and structures of almost axisymmetric
solutions to $3$-D Navier-Stokes equations}

\author[Y. Liu]{Yanlin Liu}
\address[Y. Liu]{School of Mathematical Sciences,
Laboratory of Mathematics and Complex Systems,
MOE, Beijing Normal University, 100875 Beijing, China.}
\email{liuyanlin@bnu.edu.cn}

\author[L. Xu]{Li Xu}
\address[L. Xu]{School of Mathematical Sciences, Beihang University, 100191 Beijing, China}
\email{xuliice@buaa.edu.cn}

\date{\today}

\begin{abstract}
In this paper, we consider $3$-D Navier-Stokes equations with almost
axisymmetric initial data, which means that by writing $\vv u_0
=u^r_0\vv e_r+u^\theta_0\vv e_\theta+u^z_0\vv e_z$
in the cylindrical coordinates, then $\pa_\th u^r_0,\,\pa_\th u^\th_0$
and $\pa_\th u^z_0$ are small in some sense
(recall axisymmetric means these three quantities vanish).
Then with additional smallness assumption on $u^\theta_0$,
we prove the global existence of a unique strong solution $\vv u$,
and this solution keeps close to some axisymmetric vector field.
We also establish some refined estimates for the integral average
in $\theta$ variable for $\vv u$.

Moreover, as $u^r_0,\,u^\theta_0$ and $u^z_0$ here depend on $\th$,
it is natural to expand them into Fourier series in $\th$ variable.
And we shall consider one special form of $\vv u_0$,
with some small parameter $\ve$ to measure its swirl part
and oscillating part.
We study the asymptotic expansion of the corresponding solution,
and the influences between different profiles in the asymptotic expansion.
In particular, we give some special symmetric structures that will persist
for all time. These phenomena
reflect some features of the nonlinear terms in Navier-Stokes equations.
\end{abstract}

\maketitle

Keywords: Navier-Stokes equations, axisymmetric, asymptotic expansions.

%%%%%%%%%%%%%%%%%%%%%%%%%%%%%%%%%%%%%%%%%%%%%%%%%%%%%%%%%%%%%%%%%%%%%%%%%%%%%%%%%%%%%%%%%%%%%%%%%%%%%%%%%%%%%%%%
%%%%%%%%%%%%%%%%%%%%%%%%%%%%%%%%%%%%%%%%%%%%%%%%%%%%%%%%%%%%%%%%%%%%%%%%%%%%%%%%%%%%%%%%%%%%%%%%%%%%%%%%%%%%%%%%%

\setcounter{equation}{0}
\section{Introduction}\label{Secintro}
\subsection{The general setting}
The 3-D incompressible Navier-Stokes equations (N-S) reads:
\beq\label{NS}
\left\{
\begin{aligned}
&\p_t\vv u+\vv u\cdot\nabla\vv u-\Delta\vv u+\nabla P=\vv 0,
\qquad(t,x)\in\R^+\times\R^3\\
&\dive\vv u=0,\\
&\vv u|_{t=0} =\vv u_0,
\end{aligned}
\right.
\eeq
where $\vv u$ stands for the velocity field and $P$
the scalar pressure of the fluid.
This system describes the motion of viscous incompressible fluid.
And it is worth mentioning that except the case for initial data with some special structure,
it is still a big open problem whether or not the system \eqref{NS} has a
unique global solution with large initial data.

Now let us write $\vv u$ in the cylindrical coordinates as
\begin{equation}\label{ucylin}
\vv u(t,x)=u^r(t,r,\th,z)\vv e_r+u^\theta(t,r,\th,z)\vv e_\theta+u^z(t,r,\th,z)\vv e_z,
\end{equation}
where $(r,\theta,z)$ denotes the cylindrical coordinates in $\R^3$
so that $x=(r\cos\theta,r\sin\theta,z)$, and
$$\vv e_r=(\cos\theta,\sin\theta,0),\ \vv e_\theta=(-\sin\theta,\cos\theta,0),\
\vv e_z=(0,0,1),\ r=\sqrt{x_1^2+x_2^2}.$$
Notice that in the cylindrical coordinates, there holds
\beq\label{coordinate}
\na=\vv e_r\p_r+\f{1}{r}\vv e_\theta\p_\theta+\vv e_z\p_z,
\quad \D=\p_r^2+\f{\p_r}{r}+\f{\p_\theta^2}{r^2}+\p_z^2.
\eeq
Then we can reformulate \eqref{NS} in the cylindrical coordinates as
\begin{equation}\label{eqtu}
\left\{
\begin{aligned}
& D_t u^r-\bigl(\pa_r^2+\pa_z^2+\f{\pa_r}{r}
+\f{\pa_\th^2}{r^2}-\f1{r^2}\bigr)u^r
-\f{(u^\theta)^2}{r}
+\f{2\pa_\th u^\theta}{r^2}+\pa_r P=0,\\
& D_t u^\theta-\bigl(\pa_r^2+\pa_z^2+\f{\pa_r}{r}
+\f{\pa_\th^2}{r^2}-\f1{r^2}\bigr)u^\theta
+\f{u^r u^\theta}{r}-\f{2\pa_\th u^r}{r^2}
+\f{\pa_\th P}{r}=0,\\
& D_t u^z-\bigl(\pa_r^2+\pa_z^2+\f{\pa_r}{r}
+\f{\pa_\th^2}{r^2}\bigr) u^z+\pa_z P=0,\\
& \pa_r u^r+\frac{u^r}{r}+\pa_z u^z
+\f{\pa_\th u^\theta}{r}=0,\\
& (u^r,u^\theta,u^z)|_{t=0} =(u_0^r,u^\theta_0,u^z_0),
\end{aligned}
\right.
\end{equation}
where $D_t\eqdefa\pa_t +\vv u\cdot\nabla=\pa_t+\bigl(u^r\pa_r+u^\theta\f{\pa_\th}{r}
+u^z\pa_z\bigr)$ denotes the material derivative.

For the special case when all
the components in \eqref{ucylin} do not depend on $\th$, precisely
\beno
\vv u(t,x)=u^r(t,r,z)\vv e_r+u^\theta(t,r,z)\vv e_\theta+u^z(t,r,z)\vv e_z
\eeno
then this $\vv u$ will be called axisymmetric.
It is a celebrated result that for the more special
axisymmetric without swirl case, which means $u^\theta=0$,
Ladyzhenskaya \cite{La} and independently Ukhovskii
and Yudovich \cite{UY} proved the existence of weak solutions
along with the uniqueness and regularities of such solutions.
And later Ne\v{c}as et al. \cite{LMNP} gave a simpler proof.
Their proofs deeply rely on the fact that
$\omega^\theta/r$ satisfies
\begin{equation}\label{otr}
\pa_t \f{\ot}{r}+(u^r\pa_r+u^z\pa_z) \f{\ot}{r}
-(\Delta+\frac 2r\pa_r)\f{\ot}{r}=0,
\end{equation}
so that any $L^p$ norm of $\omega^\theta/r$ is conserved.

However, for the arbitrary axisymmetric initial data
whose swirl part is non-trivial, we do not have the structure as
that in \eqref{otr} any more.
To the best of our knowledge, so far we can only establish
the global existence of strong solutions
when $\ut_0$ is sufficiently small, and this smallness
needs to depend on other components of $\vv u_0$
or $\nabla\vv u_0$, as the strategy is to view
this case as a small perturbation of the no swirl case.
There are numerous works concerning this situation,
here we only list \cite{CL02, Yau1, Yau2, Chen, Lei, LZ, Wei, ZZT2} for example.
\smallskip

Here we do not limit us in considering axisymmetric solutions,
but this geometric symmetry still plays a crucial role.
Let us also introduce $\bar u(t,x)=\bar u^r(t,r,z) e_r+\barut(t,r,z) e_\th
+\bar u^z(t,r,z) e_z$
satisfying the following axisymmetric N-S:
\begin{equation}\label{eqtbaru}
\left\{
\begin{aligned}
& \pa_t\bar u^r+(\bar u^r\pa_r+\bar u^z\pa_z)\bar u^r
-(\pa_r^2+\pa_z^2+\frac 1r\pa_r-\frac{1}{r^2})\bar u^r
-\frac{(\bar u^\theta)^2}{r}+\pa_r\bar P=0,\\
& \pa_t \bar u^\theta+(\bar u^r\pa_r+\bar u^z\pa_z) \bar u^\theta-(\pa_r^2+\pa_z^2
+\f 1r\pa_r-\f{1}{r^2})\bar u^\theta+\f{\bar u^r\bar u^\theta}{r}=0,\\
& \pa_t\bar u^z+(\bar u^r\pa_r+\bar u^z\pa_z)\bar u^z
-(\pa_r^2+\pa_z^2+\frac 1r\pa_r)\bar u^z+\pa_z\bar P=0,\\
& \pa_r\bar u^r+\frac 1r\bar u^r+\pa_z\bar u^z=0,\\
& \bar u^r|_{t=0}=\barur_0=\cM(u_0^r),\
\bar u^\th|_{t=0}=\barut_0=\cM(u_0^\th),\
\bar u^z|_{t=0}=\baruz_0=\cM(u_0^z),
\end{aligned}
\right.
\end{equation}
where $\cM(f)$ is the integral average of $f$ in $\theta$ variable, precisely
\beq\label{projection}
\cM(f)(r,z)\eqdefa\f{1}{2\pi}\int_0^{2\pi}f(r,\th,z)\,d\th.
\eeq
By definition, there holds $\cM(\p_\th f)=0$ for any smooth function $f$. Then one has
$$\dive\bar{\vv u}_0=\pa_r\bar u^r_0+\frac 1r\bar u^r_0+\pa_z\bar u^z_0
=\cM(\pa_r u^r_0+\frac{u^r_0}{r}+\pa_z u^z_0+\f{\pa_\th u^\theta_0}{r})
=\cM(\dive\vv u_0)=0.$$
This guarantees the initial data in \eqref{eqtbaru} is
indeed compatible with the divergence-free condition.

\subsection{Main results.}\label{subsectionmainthm}
 There are three main results in this paper.
The first one concerns the global well-posedness of \eqref{eqtu}
with initial data that are close to some axisymmetric vector fields in some sense.
To state this precisely, let us introduce the following norms:
\begin{equation}\label{defbarH1}
\|f\|_{\dH^1_{\rm axi}}^2\eqdef\|\wt{\na}f\|_{L^2}^2
+\|f/r\|_{L^2}^2,\andf
\|f\|_{H^1_{\rm axi}}^2\eqdef\|f\|_{L^2}^2+\|f\|_{\dH^1_{\rm axi}}^2,
\end{equation}
where $\wt{\na}\eqdef\vv e_r\p_r+\vv e_z\p_z$ is a part of the whole gradient
given by \eqref{coordinate}.
\begin{theorem}\label{thm1}
{\sl Let $\vv u_0\in H^2$ with $\dive\vv u_0=0$ and $\vv{\mathfrak{U}}_0\eqdef
(\pa_\th u^r_0,\pa_\th u^\theta_0,\pa_\th u^z_0)\in H^1_{\rm axi}$.
If there exists some small positive constant $\e$ such that
the following two smallness conditions hold:
\begin{equation}\label{smallThm1.1}
\|u^\th_0\|_{L^2}\|u^\th_0\|_{\dH^1_{\rm axi}}
\exp\bigl(C\|{\vv u}_0\|_{L^2}^6
\|{\vv u}_0\|_{\dH^2}^2\bigr)<\e,
\end{equation}
\begin{equation}\label{smallThm1.2}
\|\vv{\mathfrak{U}}_0\|_{L^2}\|\vv{\mathfrak{U}}_0\|_{\dot{H}^1_{\rm axi}}
\exp\bigl(\exp(C\|{\vv u}_0\|_{L^2}^6\|{\vv u}_0\|_{\dH^2}^2)\bigr)<\e,
\end{equation}
then both \eqref{eqtu} and \eqref{eqtbaru} have unique global strong solutions $\vv u$ and $\bar{\vv u}$ in $C(\R_+;H^1)\cap L^2(\R_+;\dH^1\cap\dH^2)$ such that for any $t>0$,
there holds
\beq\label{error estimate 1}\begin{aligned}
&\|\vv u-\bar{\vv u}\|_{L^\infty_t(L^2)}^2+\|\nabla(\vv u-\bar{\vv u})\|_{L^2_t(L^2)}^2
\lesssim\|\vv{\mathfrak{U}}_0\|_{L^2}^2
\exp\bigl(\exp(C\|{\vv u}_0\|_{L^2}^6\|{\vv u}_0\|_{\dH^2}^2)\bigr)
\eqdef\cA,\\
&\|\vv u-\bar{\vv u}\|_{L^\infty_t(\dH^1)}^2+\|\nabla(\vv u-\bar{\vv u})\|_{L^2_t(\dH^1)}^2
\lesssim\|\vv{\mathfrak{U}}_0\|_{\dot{H}^1_{\rm axi}}^2
\exp\bigl(\exp(C\|{\vv u}_0\|_{L^2}^6\|{\vv u}_0\|_{\dH^2}^2)\bigr)
\eqdef\cB.
\end{aligned}\eeq
Moreover, for $\cM{\vv u}\eqdefa\cM(u^r)\vv e_r
+\cM(u^\th)\vv e_\th+\cM(u^z)\vv e_z$, there holds
\beq\label{error estimate 2}\begin{aligned}
&\|\cM({\vv u}-\bar{\vv u})\|_{L^\infty_t(L^2)}^2
+\|\nabla\cM({\vv u}-\bar{\vv u})\|_{L^2_t(L^2)}^2
\lesssim\e\cA,\\
&\|\cM({\vv u}-\bar{\vv u})\|_{L^\infty_t(\dH^1)}^2
+\|\nabla\cM({\vv u}-\bar{\vv u})\|_{L^2_t(\dH^1)}^2
\lesssim\e\cB.
\end{aligned}\eeq
}\end{theorem}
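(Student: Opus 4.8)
The plan is to treat \eqref{eqtu} as a perturbation of the axisymmetric system \eqref{eqtbaru} in two successive layers. First I would establish global well-posedness and a priori bounds for $\bar{\vv u}$ solving \eqref{eqtbaru}: under the smallness \eqref{smallThm1.1} on $\|u^\th_0\|_{L^2}\|u^\th_0\|_{\dH^1_{\rm axi}}$, the swirl component $\barut$ stays small in the natural energy spaces (propagating $\|\barut\|_{L^2}\|\barut\|_{\dH^1_{\rm axi}}$ along the flow using the second equation of \eqref{eqtbaru}, in which the forcing term $\bar u^r\barut/r$ is linear in $\barut$), so that $\bar{\vv u}$ behaves essentially like an axisymmetric-without-swirl flow, for which global regularity with quantitative bounds controlled by $\exp(C\|\vv u_0\|_{L^2}^6\|\vv u_0\|_{\dH^2}^2)$ is classical (via the conserved structure \eqref{otr} for $\ot/r$ perturbed by the small swirl). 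This fixes the constant appearing in the exponentials in \eqref{error estimate 1}.

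\textbf{The difference estimate.} Next, set $\vv w\eqdef\vv u-\bar{\vv u}$. Since $\cM(\vv u_0-\bar{\vv u}_0)=0$ by construction, the only source driving $\vv w$ is the $\th$-dependence of $\vv u_0$, measured by $\vv{\mathfrak U}_0$. The equation for $\vv w$ is N--S linearized about $\bar{\vv u}$ plus the extra curvature terms in \eqref{eqtu} carrying $\pa_\th$ (namely $2\pa_\th u^\th/r^2$, $-2\pa_\th u^r/r^2$, $\pa_\th P/r$ and the $\pa_\th u^\th/r$ in the divergence constraint). I would run an $L^2$ energy estimate on $\vv w$, and \emph{simultaneously} an $L^2$ energy estimate on $\vv{\mathfrak U}\eqdef(\pa_\th u^r,\pa_\th u^\th,\pa_\th u^z)$ itself — differentiating \eqref{eqtu} in $\th$ — because the troublesome $\pa_\th$-terms in the $\vv w$-equation are controlled precisely by $\|\vv{\mathfrak U}\|$ in $L^2_t(\dH^1_{\rm axi})$, and $\vv{\mathfrak U}$ satisfies a transport-diffusion system whose nonlinear terms close by a Gronwall argument once $\bar{\vv u}$ (equivalently $\vv u$) is known to be globally smooth with the bounds from Step 1. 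The commutator $[\pa_\th,\vv u\cdot\nabla]$ produces terms like $\pa_\th u^r\,\pa_r(\cdot)$ etc.\ that are absorbed using $H^1$-in-space control of $\vv u$ and smallness of $\|\vv{\mathfrak U}_0\|$; this is where the double exponential in \eqref{smallThm1.2} and in $\cA,\cB$ is generated — one exponential from closing the $\bar{\vv u}$/$\vv u$ estimates, a second from the Gronwall in time on the $(\vv w,\vv{\mathfrak U})$ system whose coefficients involve the first exponential. The $\dH^1$-version of \eqref{error estimate 1} comes from the same scheme applied one derivative higher, using $\vv u_0\in H^2$ and $\vv{\mathfrak U}_0\in H^1_{\rm axi}$.

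\textbf{The averaged estimate.} Finally, for \eqref{error estimate 2} I would apply the projection $\cM$ to the equation for $\vv w=\vv u-\bar{\vv u}$. The key point is that $\cM$ annihilates every term of the form $\pa_\th(\,\cdot\,)$, and moreover $\cM(\vv u\cdot\nabla\vv u)-\bar{\vv u}\cdot\nabla\bar{\vv u}$, when expanded, splits into a part linear in $\cM\vv w$ (handled by Gronwall) plus a genuinely \emph{quadratic} remainder of the shape $\cM\bigl((\vv u-\cM\vv u)\cdot\nabla(\vv u-\cM\vv u)\bigr)$ involving only the purely oscillating part $\vv u-\cM\vv u$ — and that oscillating part is itself $O(\|\vv{\mathfrak U}\|)$, hence its square is $O(\e)$ times the basic bound $\cA$ (resp.\ $\cB$). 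Running the energy estimate on $\cM\vv w$ with this quadratically-small forcing and invoking \eqref{error estimate 1} for the oscillating part then yields the gain of the extra factor $\e$. The main obstacle throughout is the coupling in Step 2: one must set up the $(\vv w,\vv{\mathfrak U})$ estimates as a single closed system and carefully track how the a priori bounds on $\vv u$ enter the Gronwall exponent, since a careless estimate would cost a triple exponential rather than the double exponential claimed; getting the Poincaré-type inequality $\|\vv u-\cM\vv u\|_{L^2}\lesssim\|\pa_\th\vv u\|_{L^2}$ (valid because $\cM(\vv u-\cM\vv u)=0$) and its $r$-weighted analogue in the $\dH^1_{\rm axi}$ norm to interface cleanly with the curvature terms is the delicate bookkeeping.
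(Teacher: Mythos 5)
Your overall architecture---two nested perturbations to pass from an axisymmetric-without-swirl flow to $\bar{\vv u}$ and then to $\vv u$, the Poincar\'e-type inequality $\|f-\cM(f)\|_{L^2}\lesssim\|\pa_\th f\|_{L^2}$ to quantify ``almost axisymmetric'', and the observation that averaging leaves only a quadratically small forcing---matches the paper, and your accounting of where the two exponentials come from is correct. The one genuine problem is in your Step 2. The difference $\vv w=\vv u-\bar{\vv u}$, viewed as a vector field, satisfies the clean perturbation system $\pa_t\vv w+\vv u\cdot\na\vv w+\vv w\cdot\na\bar{\vv u}-\D\vv w+\na\Pi=0$ with $\vv w|_{t=0}=\vv u_0-\bar{\vv u}_0$: the terms $2\pa_\th u^\th/r^2$, $-2\pa_\th u^r/r^2$, $\pa_\th P/r$ and $\pa_\th u^\th/r$ that you single out are not extra forcings but simply the cylindrical components of $\D\vv w$, $\na\Pi$ and $\dive\vv w$, and they are absorbed automatically by the standard $L^2$ and $\dH^1$ energy estimates. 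Hence there is no need to propagate $\vv{\mathfrak{U}}(t)$ at positive times or to close a coupled $(\vv w,\vv{\mathfrak{U}})$ system; the quantity $\vv{\mathfrak{U}}_0$ enters exactly once, through $\|\vv u_0-\bar{\vv u}_0\|_{L^2}\lesssim\|\vv{\mathfrak{U}}_0\|_{L^2}$ and $\|\na(\vv u_0-\bar{\vv u}_0)\|_{L^2}\lesssim\|\vv{\mathfrak{U}}_0\|_{\dot H^1_{\rm axi}}$ applied to the initial data. The paper packages both perturbation steps into a single reusable $H^1$-stability proposition for N--S (Proposition \ref{stabH1}) and applies it twice: first around the no-swirl flow $\wt{\vv u}$ with data $\cM(u_0^r)\vv e_r+\cM(u_0^z)\vv e_z$ (globally controlled via $\wtot/r$), with perturbation $\cM(u_0^\th)\vv e_\th$ made small by \eqref{smallThm1.1}, yielding $\bar{\vv u}$ with a single exponential in its $\dH^1$ bound; then around $\bar{\vv u}$, with perturbation small by \eqref{smallThm1.2}, the single exponential in $B$ feeding $\exp(CA^2B^2)$ and producing the double exponential of \eqref{error estimate 1}. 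Your coupled-system route can probably be forced through, but it is motivated by terms that are not actually there, and it is precisely where the extra exponential you worry about would creep in.

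For \eqref{error estimate 2} your idea is the right one, with one simplification available: the paper splits around $\bar{\vv u}$ rather than around $\cM\vv u$, using $\cM(fg)-\bar f\bar g=\cM\bigl((f-\bar f)(g-\bar g)\bigr)+\bar g\,\cM(f-\bar f)+\bar f\,\cM(g-\bar g)$ with $\vv V=\vv u-\bar{\vv u}$. The quadratic remainder is then bounded by $\|\vv V\|_{L^2}\|\na\vv V\|_{L^2}^3$ and estimated directly from \eqref{error estimate 1}, so no Poincar\'e inequality at positive times (and again no control of $\vv{\mathfrak{U}}(t)$) is needed; the linear terms in $\vv v=\cM\vv u-\bar{\vv u}$, which has zero initial data, are Gronwalled against $\|\na\bar{\vv u}\|_{L^2}^4$, and the extra factor $\e$ drops out of \eqref{smallThm1.2}. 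If you insist on bounding the oscillating part by $\|\vv{\mathfrak{U}}(t)\|$ you re-import the unnecessary machinery of your Step 2; bounding it by $\|\vv V\|$ instead makes the whole argument self-contained.
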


\begin{rmk}\label{rmk1.1}
{\rm (i)} We mention that $\vv u_0$ is axisymmetric
when $\vv{\mathfrak{U}}_0$ vanishes, and it is well-known that
in this situation, the corresponding solution
$\vv u$ persists axial symmetry for all time.

In view of this, the smallness condition \eqref{smallThm1.2}
actually tells us that $\vv u_0$ is close to some
axisymmetric vector field. That is what we mean ``almost axisymmetric".
Moreover, it is reasonable to expect that this solution $\vv u$
keeps close to some axisymmetric vector field for all time.

To verify this feature quantitatively, we mention that
one natural way to axisymmetrize a solution $\vv u$ to N-S,
is to axisymmetrize the initial data, and then solve N-S
with this axisymmetrized initial data, in this way we get $\bar{\vv u}$.
Then in view of the smallness condition \eqref{smallThm1.2},
interpolating between the two estimates
in \eqref{error estimate 1} gives
$$\|\vv u-\bar{\vv u}\|_{L^\infty_t(\dH^{\f12})}^2
+\|\nabla(\vv u-\bar{\vv u})\|_{L^2_t(\dH^{\f12})}^2
\lesssim\|\vv{\mathfrak{U}}_0\|_{L^2}\|\vv{\mathfrak{U}}_0\|_{\dot{H}^1_{\rm axi}}
\exp\bigl(\exp(C\|{\vv u}_0\|_{L^2}^6\|{\vv u}_0\|_{\dH^2}^2)\bigr)
<\e,$$
which means that $\vv u$ is indeed close to the axisymmetric vector field $\bar{\vv u}$.

{\rm (ii)} Furthermore, \eqref{error estimate 2} shows that after taking average in $\th$,
$\vv u-\bar{\vv u}$ will become much smaller.
Hence there must be some cancellations in this process,
precisely the positive part of $\vv u-\bar{\vv u}$ almost balance its negative part.
This provides us more details on how $\vv u$ approaches $\bar{\vv u}$.
One can see this more clearly by expanding $\vv u$ into Fourier series
in $\th$ variable, see Remark \ref{rmk1.3} below.
\end{rmk}
\smallskip

With Theorem \ref{thm1} at hand, now let us turn to study the asymptotic expansion
of solutions to N-S with almost axisymmetric initial data.
As we know, a regular enough function can be expanded
into Fourier series in $\th$ variable. And by virtue of the assumptions
of Theorem \ref{thm1}, here we only consider initial data $\vv u(0,x)$
in the following special form
\footnote{In this part, we use $\vv a_0,\,\vv a_k$ and $\vv b_k$ to denote the
profiles of the initial data, while the subscript $0$ is used to denote the
$0$-th Fourier coefficients, so that there would be no confusion.}:
\begin{equation}\label{initialFourier}
\left\{
\begin{aligned}
& \ur(0,x)=a^r_{0}(r,z)+\ve\sum_{k=1}^\infty
\Bigl(a^r_{k}(r,z)\cos k\th+b^r_{k}(r,z)\sin k\th\Bigr),\\
& \ut(0,x)=\ve a^\th_{0}(r,z)+\ve\sum_{k=1}^\infty
\Bigl(a^\th_{k}(r,z)\cos k\th+b^\th_{k}(r,z)\sin k\th\Bigr),\\
& \uz(0,x)=a^z_{0}(r,z)+\ve\sum_{k=1}^\infty
\Bigl(a^z_{k}(r,z)\cos k\th+b^z_{k}(r,z)\sin k\th\Bigr),
\end{aligned}
\right.
\end{equation}
where $\ve>0$ is some small positive constant to be determined later,
and the profiles satisfy
\begin{equation}\begin{split}\label{1.14}
\sum_{j=0}^2\Bigl(\sum_{i=0}^{2-j}\bigl\|r^{-i}\wt{\na}^j
(a^r_{0},a^\th_{0})\bigr\|_{L^2}^2
+\|\wt{\na}^j a_{0}^z\|_{L^2}^2
+\sum_{k=1}^\infty\sum_{i=0}^{2-j} k^{2\max\{i,1\}}
\bigl\|r^{-i}\wt{\na}^j(\vv a_{k},\vv b_{k})\bigr\|_{L^2}^2\Bigr)<\infty,
\end{split}\end{equation}
and
\begin{equation}\label{1.15}
\pa_r a^r_{0}+\f{a^r_{0}}{r}+\pa_z a^z_{0}=0,
\end{equation}
\begin{equation}\label{1.16}
\pa_r a^r_{k}+\f{a^r_{k}}{r}+\pa_z a^z_{k}
+\f{k b^\th_{k}}{r}=0,\quad\pa_r b^r_{k}
+\f{b^r_{k}}{r}+\pa_z b^z_{k}-\f{k a^\th_{k}}{r}=0,\quad\forall\ k\in\N.
\end{equation}

It is not difficult to verify that the constraints \eqref{1.15},~\eqref{1.16}
meet the divergence-free condition $\dive\vv u(0)=0$.
And thanks to Parseval's identity, \eqref{1.14} guarantees that
$\vv u(0)\in H^2$ and $\vv{\mathfrak{U}}(0)\in H^1_{\rm axi}$. Precisely, we have
$$\|\vv{\mathfrak{U}}(0)\|_{L^2}^2\sim\ve^2\sum_{k=1}^\infty k^2
\bigl\|(\vv a_{k},\vv b_{k})\bigr\|_{L^2}^2,\quad
\|\vv{\mathfrak{U}}(0)\|_{\dot{H}^1_{\rm axi}}^2
\sim\ve^2\sum_{k=1}^\infty\sum_{i+j=1}
k^2\bigl\|r^{-i}\wt{\na}^j(\vv a_{k},\vv b_{k})\bigr\|_{L^2}^2,$$
and the other norms can be derived similarly.
In particular, this implies that the smallness conditions in Theorem \ref{thm1}
can be satisfied provided $\ve$ is sufficiently small, and this smallness needs to rely
on the norms of profiles appearing in \eqref{1.14}.

Then Theorem \ref{thm1} guarantees the existence of a unique global strong
solution to N-S with initial data \eqref{initialFourier}.
As we know, a strong solution to N-S would become analytic
for any positive time, thus can be expanded
into Fourier series in $\th$ variable in the following form:
\begin{equation}\begin{split}\label{1.17}
u^\lozenge(t,x)=\sum_{j=0}^\infty\ve^j u^\lozenge_{(j),0}(t,r,z)
+\sum_{j=0}^\infty\sum_{k=1}^\infty
\ve^j\Bigl(u^\lozenge_{(j),k}(t,r,z)\cos k\th
+ v^\lozenge_{(j),k}\sin k\th\Bigr).
\end{split}\end{equation}
where $\lozenge$ can be $r,\,\th$ or $z$,
and the profiles do not rely on $\ve$.

Unlike the Euclidean coordinates,
$\vv e_r$ and $\vv e_\th$ are not constant vectors.
As a result, the convergence of $\vv u$ in Sobolev spaces is
in general not equivalent to the convergence of each component of $\vv u$.
In view of this, it is optimal to verify the validity of the expansion \eqref{1.17}
in $L^\infty(\R^+;L^2\cap L^\infty)$ sense. And
our first result concerning the asymptotic expansion states as follows:

\begin{theorem}\label{thm2}
{\sl Let $\vv u(0,x)$ be given by \eqref{initialFourier} satisfying \eqref{1.14}-\eqref{1.16}.
 Then there exists some $\ve_0>0$ such that for any $\ve\in(0,\ve_0)$,
 \eqref{NS} has a unique global solution $\vv u\in C(\R_+;H^1)\cap
 L^2(\R_+;\dH^1\cap \dH^2)$. Moreover, this solution can be expanded
 as
\begin{equation}\label{1.18}
\left\{
\begin{aligned}
& \ur(t,x)=\ur_{(0),0}(t,r,z)+\ve\sum_{k=1}^\infty
\Bigl(\ur_{(1),k}(t,r,z)\cos k\th+\vr_{(1),k}(t,r,z)\sin k\th\Bigr)+\cO(\ve^2),\\
& \ut(t,x)=\ve\ut_{(1),0}(t,r,z)+\ve\sum_{k=1}^\infty
\Bigl(\ut_{(1),k}(t,r,z)\cos k\th+\vt_{(1),k}(t,r,z)\sin k\th\Bigr)+\cO(\ve^2),\\
& \uz(t,x)=\uz_{(0),0}(t,r,z)+\ve\sum_{k=1}^\infty
\Bigl(\uz_{(1),k}(t,r,z)\cos k\th+\vz_{(1),k}(t,r,z)\sin k\th\Bigr)+\cO(\ve^2)
\end{aligned}
\right.
\end{equation}
in $L^\infty(\R^+;L^2\cap L^\infty)$ sense.
}\end{theorem}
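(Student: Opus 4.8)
The global existence and uniqueness of $\vv u\in C(\R_+;H^1)\cap L^2(\R_+;\dH^1\cap\dH^2)$ follows directly from Theorem \ref{thm1}: as observed after \eqref{1.16}, Parseval's identity turns the bound \eqref{1.14} into the smallness conditions \eqref{smallThm1.1}--\eqref{smallThm1.2} as soon as $\ve$ is small enough. The substance of the theorem is the expansion \eqref{1.18}, which I would prove by the standard scheme of first constructing the profiles and then estimating the remainder. To construct the profiles, insert the formal series \eqref{1.17} for $(\ur,\ut,\uz)$, together with an analogous expansion of the pressure $P$, into the cylindrical system \eqref{eqtu}; expanding every quadratic term and identifying the coefficients of $\ve^j$, $\ve^j\cos k\th$ and $\ve^j\sin k\th$ produces the hierarchy of profile equations.

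At order $\ve^0$ one finds, using $\ut(0)=\cO(\ve)$ and that the only surviving order-$\ve^0$ Fourier mode is the zeroth one, that $u^\th_{(0),0}\equiv0$ and that $(\ur_{(0),0},\uz_{(0),0})$ solves the axisymmetric Navier--Stokes system without swirl with data $(a^r_0,a^z_0)$ (which is divergence free by \eqref{1.15}); this has a unique global smooth solution by Ladyzhenskaya \cite{La} and Ukhovskii--Yudovich \cite{UY}, with all the integrability and Sobolev bounds needed below. At order $\ve^1$, the zeroth Fourier mode reduces to a single linear transport--diffusion equation for $\ut_{(1),0}$ driven by $(\ur_{(0),0},\uz_{(0),0})$ with data $a^\th_0$, while $\ur_{(1),0}=\uz_{(1),0}\equiv0$; and for each $k\ge1$ the six profiles $(\ur_{(1),k},\ut_{(1),k},\uz_{(1),k},\vr_{(1),k},\vt_{(1),k},\vz_{(1),k})$ and the $k$-th pressure coefficient solve a \emph{linear} coupled transport--diffusion--pressure system whose coefficients involve only $(\ur_{(0),0},\uz_{(0),0})$ and whose data $(\vv a_k,\vv b_k)$ satisfies \eqref{1.16} --- and, decisively, there is no coupling between distinct $k$'s at this order, since a mode-$0$ times mode-$k$ interaction feeds only mode $k$. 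Because the background $(\ur_{(0),0},\uz_{(0),0})$ is a fixed global solution with good decay, each of these linear systems is globally well posed and obeys energy estimates $\|\cdot\|_{L^\infty_t(L^2)}^2+\|\na\cdot\|_{L^2_t(L^2)}^2\lesssim\|\text{data}\|_{L^2}^2$ and their $\dH^1_{\rm axi}$ and $\dH^2$ analogues, uniformly in $t$ and with constants polynomial in $k$ for the mode-$k$ system; pairing this polynomial $k$-growth against the weights $k^{2\max\{i,1\}}$ in \eqref{1.14} via Cauchy--Schwarz makes the series $\sum_{k\ge1}(\,\cdot\,\cos k\th+\,\cdot\,\sin k\th)$ converge in $L^\infty(\R_+;L^2\cap L^\infty)$, where for the $L^\infty$ part one combines the embedding $H^2\hookrightarrow L^\infty$ for the $(r,z)$-profiles with $\sum_k k^{-2}<\infty$.

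For the remainder, write $\vv u_{\rm app}=\vv u_{(0)}+\ve\,\vv u_{(1)}$ for the truncation of \eqref{1.17} keeping exactly the orders displayed in \eqref{1.18} (rewritten in Cartesian form), and set $\vv w\eqdef\vv u-\vv u_{\rm app}$, so that $\vv w|_{t=0}=\cO(\ve^2)$ in $H^1$. Since the profiles were chosen to cancel all terms of order $\ve^0$ and $\ve^1$, $\vv w$ satisfies $\pa_t\vv w+\vv u\cdot\na\vv w+\vv w\cdot\na\vv u_{\rm app}-\D\vv w+\na P_w=-\ve^2\,\vv u_{(1)}\cdot\na\vv u_{(1)}$ with $\dive\vv w=0$, and the source is $\cO(\ve^2)$ in $L^2_t(L^2)$ by the profile bounds of the previous step. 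Carrying out the $L^2$ energy estimate and then the $\dH^1$ estimate --- absorbing the dangerous term $\int\vv w\cdot\na\vv u_{\rm app}\cdot\vv w$ into the parabolic dissipation thanks to the smallness of the swirling and oscillating parts of $\vv u_{\rm app}$ in scale-critical norms, exactly as in the proof of \eqref{error estimate 1} --- yields $\|\vv w\|_{L^\infty_t(L^2\cap\dH^1)}+\|\na\vv w\|_{L^2_t(L^2\cap\dH^1)}\lesssim\ve^2$ uniformly in $t$; a parabolic-smoothing/maximal-regularity bootstrap then upgrades this to $\vv w=\cO(\ve^2)$ in $L^\infty(\R_+;L^2\cap L^\infty)$, which is precisely \eqref{1.18}.

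The main obstacle is the \emph{global-in-time} uniformity of both the linear profile estimates and the remainder estimate: a naive Gronwall argument would produce constants that grow with $t$, so one must instead exploit the full parabolic dissipation together with the smallness of the swirling and oscillating parts throughout, in the spirit of Theorem \ref{thm1}, and also carry the $\dH^2$-level bounds required for the $L^\infty$ conclusion. A secondary, bookkeeping difficulty is tracking the polynomial-in-$k$ behaviour of the constants in the mode-$k$ linear systems so that the Fourier series in $\th$ can be resummed in $L^\infty$; this is precisely what the weighted norm in \eqref{1.14} is designed to absorb.
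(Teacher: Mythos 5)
Your proposal is correct in outline and, for the existence part and the remainder estimate, essentially coincides with the paper's argument: global solvability comes from Theorem \ref{thm1}, the leading term is the axisymmetric-without-swirl solution with data $(a^r_0,a^z_0)$, and the remainder $\vv u-\vv u_{(0)}-\ve\vv u_{(1)}$ is estimated through the same perturbed system with source $\ve^2\vv u_{(1)}\cdot\na\vv u_{(1)}$, reusing the stability machinery of Proposition \ref{stabH1}. Where you genuinely diverge is the treatment of the $\ve^1$-order term. You decompose $\vv u_{(1)}$ into its Fourier modes in $\th$, solve a decoupled linear system for each $k$, track polynomial-in-$k$ constants, and resum the series using the weights in \eqref{1.14}. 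The paper instead deliberately returns to Euclidean coordinates and treats $\vv u_{(1)}$ as a single divergence-free vector field solving the linearization \eqref{eqtuve1} around $\vv u_{(0)}$; one $H^1$ energy estimate plus an $L^4$ vorticity estimate then supply every bound needed, with Parseval's identity converting \eqref{1.14} into the $H^1$ norm of the data, and the $L^\infty$ control of both $\vv u_{(1)}$ and the remainder coming from the interpolation $\|f\|_{L^\infty}\lesssim\|\na f\|_{L^2}^{1/3}\|\na f\|_{L^4}^{2/3}$ rather than a maximal-regularity bootstrap. The paper's route buys complete freedom from $k$-bookkeeping (mode-$k$ computations are deferred to the proof of Theorem \ref{thm3}); your route buys explicit information about each profile, but you would then have to actually establish the claimed $k$-uniform energy estimates --- the $2k\vt_{(1),k}/r^2$ coupling and the $\f{k}{r}P_{(1),k}$ pressure term must be absorbed by the $\f{1+k^2}{r^2}$ dissipation and the divergence constraint, exactly as in \eqref{5.4} --- together with higher-order mode-wise bounds for the $L^\infty$ resummation, which is noticeably more work than the paper's two global estimates. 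Two smaller points: the vanishing of $\ur_{(1),0}$ and $\uz_{(1),0}$ is not automatic from the formal hierarchy, since the zeroth-mode $(r,z)$-part satisfies the linear system \eqref{eq for ur uz 1 0} with zero data and one must still run the energy/Gronwall argument to conclude it stays zero; and the remainder's initial datum is exactly zero by construction (the profiles reproduce \eqref{initialFourier} exactly), which is what makes the $\cO(\ve^2)$ bound come purely from the forcing term.
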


\begin{rmk}
We can see that up to $\ve$ order, this expansion \eqref{1.18} has the same form as the
initial state \eqref{initialFourier}. In particular, the coefficients
$\ur_{(1),0}$ and $\uz_{(1),0}$ vanish for all time.
\end{rmk}

The following result concerns the odevity in this asymptotic expansion.
We mention that this persistence of odevity deeply reflects some nonlinear
structures of N-S.

\begin{theorem}\label{thm3}
{\sl Under the assumptions of Theorem \ref{thm2},
let us consider the special case of \eqref{initialFourier}
that $\ur(0,x)$ and $\uz(0,x)$ are even in $\th$,
while $\ut(0,x)$ is odd in $\th$, precisely:
\begin{equation}\label{inF}
\left\{
\begin{aligned}
& \ur(0,x)=a^r_{0}(r,z)+\ve\sum_{k=1}^\infty
a^r_{k}(r,z)\cos k\th,\\
& \ut(0,x)=\ve\sum_{k=1}^\infty
b^\th_{k}(r,z)\sin k\th,\\
& \uz(0,x)=a^z_{0}(r,z)+\ve\sum_{k=1}^\infty
a^z_{k}(r,z)\cos k\th.
\end{aligned}
\right.
\end{equation}
Then this odevity will persist for all time. Precisely,
the following expansion
\begin{equation}\label{oddexpan}
\left\{
\begin{aligned}
\ur(t,x)=& \Bigl(\ur_{(0),0}+\sum_{j=2}^\infty\ve^j u^r_{(j),0}\Bigr)(t,r,z)
+\sum_{j=1}^\infty\sum_{k=1}^\infty\ve^j \ur_{(j),k}(t,r,z)\cos k\th,\\
\ut(t,x)=&\sum_{j=1}^\infty\sum_{k=1}^\infty\ve^j \vt_{(j),k}(t,r,z)\sin k\th,\\
\uz(t,x)=& \Bigl(\uz_{(0),0}+\sum_{j=2}^\infty\ve^j u^z_{(j),0}\Bigr)(t,r,z)
+\sum_{j=1}^\infty\sum_{k=1}^\infty\ve^j \uz_{(j),k}(t,r,z)\cos k\th
\end{aligned}
\right.
\end{equation}
holds in $L^\infty(\R^+;L^2\cap L^\infty)$ sense.
}\end{theorem}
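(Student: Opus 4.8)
The plan is to exploit the fact that the expansion~\eqref{1.18} already established in Theorem~\ref{thm2}, together with the global existence and analyticity granted there, reduces everything to tracking the parity of the Fourier profiles $u^\lozenge_{(j),k}$, $v^\lozenge_{(j),k}$ under the symmetry $\th\mapsto-\th$. Introduce the reflection operator $\cR$ acting on a vector field written in cylindrical coordinates by $(\cR\vv u)^r(r,\th,z)=u^r(r,-\th,z)$, $(\cR\vv u)^\th(r,\th,z)=-u^\th(r,-\th,z)$, $(\cR\vv u)^z(r,\th,z)=u^z(r,-\th,z)$. The first step is to check that $\cR$ is a symmetry of the Navier--Stokes system~\eqref{eqtu}: this is a direct inspection of the equations, using that $\pa_\th$ picks up a sign under $\th\mapsto-\th$ and that every nonlinear and linear term in~\eqref{eqtu} is compatible with the prescribed sign pattern $(+,-,+)$ on $(u^r,u^\th,u^z)$ and $(+,-,+)$ on $(\pa_r,\f{\pa_\th}{r},\pa_z)P$ (equivalently, $\cR$ corresponds to the Euclidean reflection $(x_1,x_2,x_3)\mapsto(x_1,-x_2,x_3)$, which manifestly preserves \eqref{NS}). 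Since the initial data~\eqref{inF} satisfies $\cR\vv u_0=\vv u_0$, uniqueness of the global strong solution from Theorem~\ref{thm1} forces $\cR\vv u(t)=\vv u(t)$ for all $t>0$; that is, $u^r$ and $u^z$ are even in $\th$ and $u^\th$ is odd in $\th$ for all time.

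The second step is to transfer this parity statement into the asymptotic expansion~\eqref{1.17}--\eqref{1.18}. Because the expansion is unique (the profiles $u^\lozenge_{(j),k},v^\lozenge_{(j),k}$ are determined as $\ve$-derivatives of the $L^2\cap L^\infty$-valued map $\ve\mapsto\vv u$, or equivalently by reading off Fourier coefficients), applying $\cR$ term by term and matching shows that all $\sin k\th$ coefficients of $u^r$ and $u^z$ and all $\cos k\th$ coefficients of $u^\th$ must vanish identically, which is precisely the form~\eqref{oddexpan}. Here one also invokes the already-known vanishing of $u^r_{(1),0}$ and $u^z_{(1),0}$ from the remark after Theorem~\ref{thm2}, together with the fact that $u^\th$ carries an overall $\ve$ (from~\eqref{inF}, where $a^\th_0=0$ and the swirl part starts at order $\ve$) so no $\ve^0$ swirl profile appears; combined with oddness this gives $u^\th=\sum_{j\ge1}\sum_{k\ge1}\ve^j v^\th_{(j),k}\sin k\th$ as claimed.

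A complementary, more self-contained route—useful if one wants to avoid quoting the uniqueness of the full expansion—is to derive the evolution equations satisfied by the profiles by plugging~\eqref{1.17} into~\eqref{eqtu} and collecting powers of $\ve$ and Fourier modes, obtaining for each $(j,k)$ a linear parabolic system for $(u^\lozenge_{(j),k},v^\lozenge_{(j),k})$ forced by finitely many lower-order profiles through the quadratic nonlinearity; one then proves by induction on $j$ that the forcing terms respect the parity pattern, so that the zero-data components stay zero. The bookkeeping here is that a product of two profiles with Fourier modes $k_1,k_2$ produces modes $k_1\pm k_2$, and the sign bookkeeping of $\cos/\sin$ combined with the $(+,-,+)$ pattern on the components closes the induction; this is exactly the ``nonlinear structure'' alluded to before the theorem. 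I expect the main obstacle to be purely organizational rather than analytic: one must be careful that the reflection symmetry is stated at the level of the PDE with enough regularity that uniqueness in Theorem~\ref{thm1} genuinely applies to $\cR\vv u$ (which it does, since $\cR$ is an isometry on every $H^s$ and on the spaces in~\eqref{error estimate 1}), and that the term-by-term identification of Fourier--Taylor coefficients is justified by the $L^\infty(\R^+;L^2\cap L^\infty)$ convergence already proved in Theorem~\ref{thm2}. No new estimates beyond those of Theorems~\ref{thm1} and~\ref{thm2} are needed.
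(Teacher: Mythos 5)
Your proposal is correct, and your primary route is genuinely different from the paper's. The paper proves Theorem~\ref{thm3} by induction on the order $j$ of the expansion: it shows that the quadratic forcing $\sum\vv u_{(i)}\cdot\nabla\vv u_{(n-i)}$ preserves the $(\cos,\sin,\cos)$ pattern, so the wrong-parity components $(\vr_{(n),k},\ut_{(n),k},\vz_{(n),k})$ and $\ut_{(n),0}$ satisfy self-contained linear parabolic systems with zero data and zero forcing, and then kills them by an energy estimate (with some care needed for the cross term $4k\,r^{-2}\ut_{(n),k}\vr_{(n),k}$, absorbed by the $(1+k^2)r^{-2}$ damping); the vanishing of $\vv u_{(1),0}$ is obtained from the refined estimate \eqref{error estimate 2}. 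This is exactly your ``complementary, more self-contained route.'' Your main argument --- that \eqref{eqtu} is invariant under the reflection $(x_1,x_2,x_3)\mapsto(x_1,-x_2,x_3)$, that the data \eqref{inF} is $\cR$-invariant, and that strong uniqueness forces $\cR\vv u(t)=\vv u(t)$, after which the parity is transferred to the profiles by uniqueness of the Taylor coefficients in $\ve$ (justified by the remainder bounds $\|\vv R_{(n)}\|\leq C_0\ve^{n+1}$ of Remark~\ref{rmk4.1}) --- is shorter, avoids all the mode-by-mode energy estimates, and gives the stronger pointwise statement that the full solution itself has the claimed parity, not merely its expansion. You are right that it must be supplemented by the facts already proved in Theorem~\ref{thm2} (namely $u^\th_{(0)}=0$, $u^r_{(0),k}=u^z_{(0),k}=0$ for $k\geq1$, and $u^r_{(1),0}=u^z_{(1),0}=0$), since parity alone does not force the $k=0$, $\cos$-type coefficients of $u^r,u^z$ to vanish at order $\ve$. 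What the paper's longer route buys is the explicit profile systems \eqref{deteu1}, \eqref{deteu12}, \eqref{5.10} and the formulas for $F^r_{(n),0},F^z_{(n),0}$, which underlie the heuristic $\cM(\vv u-\bar{\vv u})\sim|\vv u-\bar{\vv u}|^2$ discussed in Remark~\ref{rmk1.3}; your symmetry argument does not produce this structural information.
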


\begin{rmk}\label{rmk1.3}
One can see from the proof in Section \ref{sec5} that,
 $\ur_{(0),0}\vv e_r+\uz_{(0),0}\vv e_z$ here
actually satisfies axisymmetric N-S with initial data
$a_0^r\vv e_r+a_0^z\vv e_z=\cM(\vv u(0,x))$.
Thus $\ur_{(0),0}\vv e_r+\uz_{(0),0}\vv e_z$ corresponds to the
$\bar{\vv u}$ in Theorem \ref{thm1}, and we can obtain
from the expression \eqref{oddexpan} that
$$\vv u-\bar{\vv u}=\sum_{j=2}^\infty\ve^j\vv u_{(j),0}
+\sum_{j=1}^\infty\sum_{k=1}^\infty\ve^j \Bigl(\vv u_{(j),k}\cos k\th
+\vv v_{(j),k}\sin k\th\Bigr)=\cO(\ve),$$
and
$$\cM(\vv u-\bar{\vv u})=\sum_{j=2}^\infty\ve^j\vv u_{(j),0}
=\cO(\ve^2),$$
which indicates that $\cM(\vv u-\bar{\vv u})$ is in general much smaller
than $\vv u-\bar{\vv u}$, just as what we have mentioned at the end of
Remark \ref{rmk1.1}.
Here we can see that the reason is the cancellations
of the oscillating terms, which is the main terms in $\vv u-\bar{\vv u}$.

Roughly speaking, here we have
$\cM(\vv u-\bar{\vv u})\sim|\vv u-\bar{\vv u}|^2.$
To see this more clearly, we refer the readers to Section \ref{sec5}
to find the derivations of the profiles in \eqref{oddexpan}.
\end{rmk}

Let us end this section with some notations which will be used throughout this paper.

{\bf Notations.}
We shall use $C$ to denote an universal constant which may change from line to line.
The notation $a\lesssim b$ means $a\leq Cb$,
and $f\sim g$ means both $a\lesssim b$ and  $b\lesssim a$ hold.
For a Banach space B, we shall use the shorthand $L^p_T(B)$ for $\bigl\|\|\cdot\|_B\bigr\|_{L^p(0,T;dt)}$.
We use $H^s$ (resp. $\dH^s$) to denote
inhomogeneous (resp. homogeneous) $L^2$ based Sobolev spaces.

\setcounter{equation}{0}
\section{Preliminary}

\subsection{Poincar\'e-type inequality}
\begin{lemma}\label{lemM}
{\sl For any $p\in[1,\infty]$ and $f\in L^p$,
the operator $\cM$ defined in \eqref{projection} satisfies
\begin{equation}\label{MLp}
\|\cM(f)\|_{L^p}\leq\|f\|_{L^p},\andf
\|f-\cM(f)\|_{L^p}\leq2\pi\|\pa_\th f\|_{L^p}.
\end{equation}
}\end{lemma}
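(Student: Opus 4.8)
The statement to prove is Lemma \ref{lemM}, the Poincar\'e-type bounds \eqref{MLp} for the angular average operator $\cM$. The first inequality $\|\cM(f)\|_{L^p}\leq\|f\|_{L^p}$ is a straightforward consequence of the fact that $\cM$ is an averaging operator. I would write, for fixed $(r,z)$,
\[
|\cM(f)(r,z)|=\Bigl|\f1{2\pi}\int_0^{2\pi}f(r,\th,z)\,d\th\Bigr|\leq\f1{2\pi}\int_0^{2\pi}|f(r,\th,z)|\,d\th,
\]
and then apply Jensen's inequality (or H\"older with exponents $p$ and $p'$ on the normalized measure $d\th/2\pi$) to get $|\cM(f)(r,z)|^p\leq\f1{2\pi}\int_0^{2\pi}|f(r,\th,z)|^p\,d\th$; integrating in $r\,dr\,dz$ and using Fubini gives $\|\cM(f)\|_{L^p}^p\leq\|f\|_{L^p}^p$. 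The $p=\infty$ case is immediate from the pointwise bound by $\sup_\th|f|$.

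\textbf{The second inequality.} For $f-\cM(f)$, the key observation is that $\cM(f-\cM(f))=0$, i.e. $g\eqdef f-\cM(f)$ has zero angular average for each $(r,z)$, so we may exploit the fundamental theorem of calculus in $\th$. Concretely, since $\int_0^{2\pi}g(r,\th,z)\,d\th=0$, for any $\th$ there exists (by the mean value / intermediate value property applied to the antiderivative, or simply because a continuous periodic mean-zero function vanishes somewhere) a point $\th_*=\th_*(r,z)$ with $g(r,\th_*,z)=0$; then
\[
g(r,\th,z)=\int_{\th_*}^{\th}\pa_\th g(r,s,z)\,ds=\int_{\th_*}^{\th}\pa_\th f(r,s,z)\,ds,
\]
using $\pa_\th\cM(f)=0$. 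Hence $|g(r,\th,z)|\leq\int_0^{2\pi}|\pa_\th f(r,s,z)|\,ds$, and then the same Jensen/H\"older step as above (now without the $1/2\pi$ normalization, which produces the factor $2\pi$) gives $\|g\|_{L^p}\leq 2\pi\|\pa_\th f\|_{L^p}$. To make the argument clean and avoid the existence-of-a-zero subtlety when $f$ is merely $L^p$, I would instead argue by density: first prove the inequality for smooth $f$ (where $\th_*$ exists by continuity), then note both sides are continuous in the $L^p$ topology on $f$ (the right-hand side interpreted via $\pa_\th f$ as a distribution, or simply restrict attention to $f$ with $\pa_\th f\in L^p$ and approximate), so the bound passes to the limit.

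\textbf{Main obstacle.} There is no real analytic difficulty here; the only thing requiring a little care is the regularity bookkeeping in the second estimate — making sure the pointwise representation $g(r,\th,z)=\int_{\th_*}^\th\pa_\th f\,ds$ is legitimate and that the choice of $\th_*$ can be made measurably in $(r,z)$ (or bypassed via density / via the cleaner bound $|g(r,\th,z)|=\bigl|\f1{2\pi}\int_0^{2\pi}\int_{s}^{\th}\pa_\th f(r,\sigma,z)\,d\sigma\,ds\bigr|\leq\int_0^{2\pi}|\pa_\th f(r,\sigma,z)|\,d\sigma$, which averages over the base point $s$ and sidesteps choosing $\th_*$ altogether). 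I would in fact present this last averaged form as the main line of the proof, since it is the shortest route to the factor $2\pi$ and needs no appeal to existence of a zero.
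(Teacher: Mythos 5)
Your proposal is correct and, in its final form (the averaged representation $f-\cM(f)=\f1{2\pi}\int_0^{2\pi}\int_{s}^{\th}\pa_\th f\,d\sigma\,ds$ followed by H\"older/Jensen on the normalized angular measure), coincides with the paper's own proof; the paper carries the factor $|\th-\th'|^{p-1}$ through the H\"older step and momentarily obtains the slightly sharper constant $\bigl(\tfrac{2}{p+1}\bigr)^{1/p}2\pi$ before discarding it, but the argument is the same. Your instinct to avoid the choose-a-zero-$\th_*$ route is also what the paper does.
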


\begin{proof}
Let us first consider the case when $p\in[1,\infty)$.
By applying H\"{o}lder's inequality, we get
\begin{align*}
\|\cM(f)\|_{L^p}^p&=2\pi\int_{\R^+\times\R}\Bigl|\f{1}{2\pi}
\int_0^{2\pi}f(r,\th,z)\,d\th\Bigr|^p\,rdrdz\\
&\leq\int_{\R^+\times\R}\bigl(\int_0^{2\pi}|f(r,\th,z)|^p\,d\th\bigr)\,rdrdz
=\|f\|_{L^p}^p.
\end{align*}
To prove the second inequality of \eqref{MLp}, we first write
\begin{align*}
|f(r,\th,z)-\cM(f)(r,z)|^p&=\Bigl|\f{1}{2\pi}
\int_0^{2\pi}\bigl(f(r,\th,z)-f(r,\th',z)\bigr)\,d\th'\Bigr|^p\\
&=\Bigl|\f{1}{2\pi}
\int_0^{2\pi}\bigl(\int_{\th'}^\th(\pa_\th f)(r,\th'',z)\,d\th''\bigr)\,d\th'\Bigr|^p\\
&\leq\f{1}{2\pi}\int_0^{2\pi}\Bigl|\int_{\th'}^\th
\bigl|(\pa_\th f)(r,\th'',z)\bigr|^p\,d\th''\Bigr|
|\th-\th'|^{p-1}\,d\th'\\
&\leq\f{1}{2\pi}\int_0^{2\pi}\bigl|(\pa_\th f)(r,\th'',z)\bigr|^p\,d\th''
\int_0^{2\pi}|\th-\th'|^{p-1}\,d\th'.
\end{align*}
As a result, we can obtain
\begin{align*}
\|f(r,\th,z)-\cM(f)(r,z)\|_{L^p}^p&=\f{1}{2\pi}
\int_{\R^+\times\R}\Bigl(\int_0^{2\pi}\bigl|(\pa_\th f)(r,\th'',z)\bigr|^p
\,d\th''\Bigr)\,rdrdz\\
&\qquad\qquad\qquad\qquad
\times\int_0^{2\pi}\int_0^{2\pi}|\th-\th'|^{p-1}\,d\th'd\th\\
&=\f{2}{p+1}(2\pi)^p\|\pa_\th f\|_{L^p}^p\leq(2\pi)^p\|\pa_\th f\|_{L^p}^p.
\end{align*}
Clearly this gives the second inequality of \eqref{MLp} for $p\in[1,\infty)$.

As for the case when $p=\infty$, we have the following formulas:
$$|\cM(f)(x)|=\Bigl|\f{1}{2\pi}\int_0^{2\pi}f(r,\th,z)\,d\th\Bigr|\leq\|f\|_{L^\infty},$$
and
$$|f(x)-\cM(f)(x)|=\Bigl|\f{1}{2\pi}
\int_0^{2\pi}\bigl(\int_{\th'}^\th(\pa_\th f)(r,\th'',z)\,d\th''\bigr)\,d\th'\Bigr|
\leq2\pi\|\pa_\th f\|_{L^\infty},$$
which hold for any $x\in\R^3$. This completes the proof of this lemma.
\end{proof}

\subsection{Properties for axisymmetric functions.}
Let us first recall the well-known Biot-Savart law,
which asserts that any divergence-free velocity field $\vv u$
can be uniquely determined by its vorticity $\curl\vv u$. Moreover, for any
$s> 0$ and any $1<p<\infty$, there holds
\begin{equation}\label{Biot}
\|\na\vv u\|_{\dH^s}\sim\|\curl\vv u\|_{\dH^s},\quad
\|\nabla\vv u\|_{L^p}\sim\|\curl\vv u\|_{L^p}.
\end{equation}
For the special case when the velocity field is axisymmetric without swirl,
namely $$\wt{\vv u}(x)=\wt{u}^r(r,z)\vv e_r+\wt{u}^z(r,z)\vv e_z,$$
then we can use $\wtot=\p_z\wt u^r-\p_r\wt u^z$ to represent $\widetilde{\vv u}$.
Moreover, there holds:

\begin{lemma}\label{lem2.3}
{\sl If in addition $\wt{\vv u}\in H^2$, then we have
the following estimates:
\begin{equation}\label{lem2.3.1}
\|\na\wt{\vv u}\|_{L^2}
\sim\|\wt{\na}\wt{u}^r\|_{L^2}+\|\wt{\na}\wt{u}^z\|_{L^2}
+\bigl\|\f{\wt{u}^r}{r}\bigr\|_{L^2}\sim\|{\wt\omega}^\theta\|_{L^2},
\end{equation}
\begin{equation}\label{lem2.3.2}
\|\na^2\wt{\vv u}\|_{L^2}^2\sim\|\wt{\na}\wtot\|_{L^2}^2
+\bigl\|\f{\wtot}{r}\bigr\|_{L^2}^2.
\end{equation}
}\end{lemma}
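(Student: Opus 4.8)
The plan is to prove both equivalences by exploiting the explicit form of the curl and the Laplacian in cylindrical coordinates for axisymmetric swirl-free fields, together with the Biot--Savart estimates \eqref{Biot} and a few integration-by-parts identities. For the first line \eqref{lem2.3.1}, I would start from $\curl\wt{\vv u}=\wtot\,\vv e_\th$ with $\wtot=\pa_z\wt u^r-\pa_r\wt u^z$; since $\wt{\vv u}$ is divergence-free, \eqref{Biot} with $s=0$ (equivalently the $L^2$ case) gives $\|\na\wt{\vv u}\|_{L^2}\sim\|\curl\wt{\vv u}\|_{L^2}=\|\wtot\|_{L^2}$, which is the right-hand equivalence. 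For the middle quantity, I would compute $\na\wt{\vv u}$ componentwise in the moving frame $(\vv e_r,\vv e_\th,\vv e_z)$: using \eqref{coordinate} and $\pa_r\vv e_r=0$, $\pa_\th\vv e_r=\vv e_\th$, $\pa_\th\vv e_\th=-\vv e_r$, one finds the nonzero entries of $\na\wt{\vv u}$ are exactly $\pa_r\wt u^r,\ \pa_z\wt u^r,\ \pa_r\wt u^z,\ \pa_z\wt u^z$ and $\wt u^r/r$ (the last coming from the $\pa_\th$-derivative of $\wt u^r\vv e_r$). Hence $\|\na\wt{\vv u}\|_{L^2}^2=\|\wt\na\wt u^r\|_{L^2}^2+\|\wt\na\wt u^z\|_{L^2}^2+\|\wt u^r/r\|_{L^2}^2$, which gives the left-hand equivalence, in fact with constant $1$.

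For \eqref{lem2.3.2} I would treat $\wtot$ as the ``stream-function vorticity'' and reconstruct $\na^2\wt{\vv u}$ from it. One route: since $\wtot\,\vv e_\th=\curl\wt{\vv u}$ and $\wt{\vv u}$ is divergence free, applying \eqref{Biot} with $s=1$ gives $\|\na^2\wt{\vv u}\|_{L^2}\sim\|\na\wt{\vv u}\|_{\dH^1}\sim\|\curl\wt{\vv u}\|_{\dH^1}=\|\na(\wtot\vv e_\th)\|_{L^2}$. Then, exactly as in the previous paragraph but now applied to the swirl-only field $\wtot\,\vv e_\th$, the gradient in the moving frame has nonzero entries $\pa_r\wtot,\ \pa_z\wtot$ and $\wtot/r$, so $\|\na(\wtot\vv e_\th)\|_{L^2}^2=\|\wt\na\wtot\|_{L^2}^2+\|\wtot/r\|_{L^2}^2$. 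Combining the two displays yields \eqref{lem2.3.2}. I would need to double-check that \eqref{Biot} is applicable with $s=1$ for $H^2$ velocity fields — the statement there requires $s>0$, so this is fine, and the homogeneous norm on the left is controlled because $\wt{\vv u}\in H^2$ so there is no low-frequency issue.

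An alternative, more self-contained derivation of \eqref{lem2.3.2} that avoids citing \eqref{Biot} at order $s=1$: write $\wt u^r=-\pa_z\psi/\,$-type relations via a stream function, or more directly, differentiate the relation $-(\pa_r^2+\pa_z^2+\frac1r\pa_r-\frac1{r^2})\wt u^r=\pa_z\wtot$ and $-(\pa_r^2+\pa_z^2+\frac1r\pa_r)\wt u^z=-(\pa_r+\frac1r)\wtot$ (these come from taking the curl of the vorticity), then run energy estimates: multiply by suitable test functions and integrate by parts over $\R^+\times\R$ with weight $r\,dr\,dz$, carefully tracking the boundary terms at $r=0$ (which vanish by the regularity and the $1/r$-type decay encoded in $\wt{\vv u}\in H^2$). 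This produces $\|\na^2\wt{\vv u}\|_{L^2}^2$ on one side and $\|\wt\na\wtot\|_{L^2}^2+\|\wtot/r\|_{L^2}^2$ on the other, up to harmless cross terms absorbed by Cauchy--Schwarz.

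The main obstacle I anticipate is the bookkeeping of the $1/r$ weights and the behaviour near the axis $r=0$: one must justify that all integrations by parts in $r$ have vanishing boundary contributions and that quantities like $\wt u^r/r$ and $\wtot/r$ are genuinely in $L^2$ (this uses the axisymmetry — $\wt u^r$ and $\wtot$ vanish on the axis — together with $\wt{\vv u}\in H^2$). Once the frame computation for $\na$ of an $\vv e_r$- or $\vv e_\th$-valued axisymmetric field is done cleanly, both equivalences follow by applying it twice and invoking \eqref{Biot}; the constants in \eqref{lem2.3.1} and \eqref{lem2.3.2} are then explicit except for the Biot--Savart comparison, which only affects the $\sim$ and not the identities for the frame-norms.
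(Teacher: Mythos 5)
Your proposal is correct, and for \eqref{lem2.3.1} it is essentially the paper's argument made slightly more explicit: the paper records only the pointwise bound $|r^{-1}\wt{u}^r|\leq|\na\wt{\vv u}|$ and invokes \eqref{Biot}, whereas you write out the full moving-frame identity $|\na\wt{\vv u}|^2=|\wt{\na}\wt{u}^r|^2+|\wt{\na}\wt{u}^z|^2+|\wt{u}^r/r|^2$, which gives the left equivalence with constant $1$. For \eqref{lem2.3.2} your main route is genuinely different from the paper's. The paper writes $-\D\wt{\vv u}=\curl\curl\wt{\vv u}=\curl(\wtot\vv e_\th)=-\p_z\wtot\,\vv e_r+(\p_r\wtot+\f{\wtot}{r})\vv e_z$, so it lands on $\bigl\|\p_r\wtot+\f{\wtot}{r}\bigr\|_{L^2}^2+\|\p_z\wtot\|_{L^2}^2$ and must expand the square and kill the cross term $2\int\p_r\wtot\cdot\f{\wtot}{r}\,dx$ by an integration by parts that uses $\wtot\to0$ as $r\to0$ and $r\to\infty$. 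You instead pass through $\|\na^2\wt{\vv u}\|_{L^2}\sim\|\curl\wt{\vv u}\|_{\dH^1}=\|\na(\wtot\vv e_\th)\|_{L^2}$ via \eqref{Biot} with $s=1$ and then apply the same frame computation to the swirl-only field $\wtot\vv e_\th$, which is already diagonal: $|\na(\wtot\vv e_\th)|^2=|\wt{\na}\wtot|^2+|\wtot/r|^2$ pointwise, with no cross term. This buys you a proof with no boundary analysis at the axis (the obstacle you flag at the end is in fact absent from your main route, since $\|\wtot/r\|_{L^2}\leq\|\na^2\wt{\vv u}\|_{L^2}$ comes for free from the identity), at the price of invoking the Fourier-multiplier equivalence $\|\na\cdot\|_{\dH^1}\sim\|\curl\cdot\|_{\dH^1}$ rather than the more elementary $\|\na^2\cdot\|_{L^2}\sim\|\D\cdot\|_{L^2}$; both are covered by \eqref{Biot} as stated. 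Your sketched ``alternative self-contained'' derivation is vaguer and unnecessary, but the main argument stands.
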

\begin{proof}
Notice that $\wtur,\,\wtuz$ and $\vv e_z$ are independent of $\theta$,
we have the point-wise estimate
$$\bigl|r^{-1}\wtur(x)\bigr|=\bigl|r^{-1}{\pa_\theta}
\bigl(\wt{u}^r(r,z)\vv e_r+\wt{u}^z(r,z)\vv e_z\bigr)\bigr|
\leq|\nabla \wt{\vv u}(x)|,\quad\forall\ x\in\R^3.$$
which along with \eqref{Biot} implies the first desired estimate \eqref{lem2.3.1}.

On the other hand, since $\dive\wt{\vv u}=0$,
$\pa_\theta\vv e_r=\vv e_\theta$ and $\pa_\theta\vv e_\theta=-\vv e_r$, one has
\beno\begin{aligned}
&-\D\wt{\vv u}=\curl\curl\wt{u}=\curl(\wtot\vv e_\theta)=-\p_z\wtot\vv e_r+(\p_r\wtot+\f{\wtot}{r})\vv e_z,
\end{aligned}\eeno
which implies
\beq\label{E 4}\begin{aligned}
\|\na^2\wt{\vv u}\|_{L^2}^2
&\sim\bigl\|\p_r\wtot+\f{\wtot}{r}\bigr\|_{L^2}^2+\|\p_z\wtot\|_{L^2}^2\\
&=\|\wt{\na}\wtot\|_{L^2}^2+\bigl\|\f{\wtot}{r}\bigr\|_{L^2}^2
+2\int_{\R^3}\p_r\wtot\cdot\f{\wtot}{r}\,dx.
\end{aligned}\eeq
While the condition $\wt{\vv u}\in H^2$ implies $\wtot\in L^2$
and $r^{-1}\wtot\in L^2$, and thus
$\lim_{r\rightarrow0}\wtot=\lim_{r\rightarrow\infty}\wtot=0.$
As a result, there holds
\beno
2\int_{\R^3}\p_r\wtot\cdot\f{\wtot}{r}\,dx
=2\pi\int_{\R^+\times\R}\pa_r|\wtot|^2\,drdz
=2\pi\int_{\R}|\wtot|^2\big|_{r=0}^\infty\, dz=0,
\eeno
which along with \eqref{E 4} gives the second desired estimate \eqref{lem2.3.2}. It completes the proof of the lemma.
\end{proof}

\begin{lemma}\label{lembaru}
{\sl Let $\bar{\vv u}(x)=\bar{u}^r(r,z)\vv e_r+\barut(r,z)\vv e_\th
+\bar{u}^z(r,z)\vv e_z\in H^1$ be divergence-free, and the corresponding
vorticity $\bar\omega^\th=\p_z\bar u^r-\p_r\bar u^z$. Then we have
\beq\label{A 5}\begin{aligned}
\|\na\bar{\vv u}\|_{L^2}^2&\sim\|\bar\omega^\th\|_{L^2}^2
+\|\wt{\na}\barut\|_{L^2}^2
+\bigl\|\f{\barut}{r}\bigr\|_{L^2}^2\\
&\sim\|\wt{\na}\bar{u}^r\|_{L^2}^2+\|\wt{\na}\bar{u}^z\|_{L^2}^2+\|\wt{\na}\barut\|_{L^2}^2
+\bigl\|\f{\bar{u}^r}{r}\bigr\|_{L^2}^2+\bigl\|\f{\barut}{r}\bigr\|_{L^2}^2,
\end{aligned}\eeq
}\end{lemma}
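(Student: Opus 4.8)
\textbf{Proof proposal for Lemma \ref{lembaru}.}

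The plan is to decompose the full gradient of $\bar{\vv u}$ into the ``swirl'' part and the ``no-swirl'' part, handle each by a variant of Lemma \ref{lem2.3}, and then absorb the various cross-terms. First I would split $\bar{\vv u}=\wt{\vv u}+\barut\vv e_\th$, where $\wt{\vv u}\eqdef\bar u^r\vv e_r+\bar u^z\vv e_z$ is axisymmetric without swirl and divergence-free (the divergence-free condition for $\bar{\vv u}$ is $\pa_r\bar u^r+\bar u^r/r+\pa_z\bar u^z=0$, which involves only $\wt{\vv u}$). Since $\vv e_r,\vv e_\th,\vv e_z$ are mutually orthogonal pointwise, we have $|\na\bar{\vv u}|^2\sim|\na\wt{\vv u}|^2+|\na(\barut\vv e_\th)|^2$ after integration; more carefully, $\curl\bar{\vv u}=\curl\wt{\vv u}+\curl(\barut\vv e_\th)$, and using $\pa_\th\vv e_r=\vv e_\th$, $\pa_\th\vv e_\th=-\vv e_r$ one computes $\curl(\barut\vv e_\th)=-\pa_z\barut\,\vv e_r+(\pa_r\barut+\barut/r)\vv e_z$ and $\curl\wt{\vv u}=\bar\omega^\th\vv e_\th$. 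These two vorticity contributions are pointwise orthogonal (one lies along $\vv e_\th$, the other in the $\vv e_r$–$\vv e_z$ plane), so by the Biot-Savart estimate \eqref{Biot},
\beq\label{splitgrad}
\|\na\bar{\vv u}\|_{L^2}^2\sim\|\curl\bar{\vv u}\|_{L^2}^2
=\|\bar\omega^\th\|_{L^2}^2+\|\pa_z\barut\|_{L^2}^2
+\bigl\|\pa_r\barut+\tfrac{\barut}{r}\bigr\|_{L^2}^2.
\eeq

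Next I would treat the last term on the right of \eqref{splitgrad} exactly as in the proof of \eqref{E 4}: expanding the square gives
\beq\label{crossterm}
\bigl\|\pa_r\barut+\tfrac{\barut}{r}\bigr\|_{L^2}^2
=\|\pa_r\barut\|_{L^2}^2+\bigl\|\tfrac{\barut}{r}\bigr\|_{L^2}^2
+2\int_{\R^3}\pa_r\barut\cdot\tfrac{\barut}{r}\,dx,
\eeq
and the cross-term is $2\pi\int_{\R}|\barut|^2\big|_{r=0}^\infty\,dz$, which vanishes provided $\barut\in L^2$ and $\barut/r\in L^2$ so that $\barut\to0$ as $r\to0$ and $r\to\infty$. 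This gives $\|\na\bar{\vv u}\|_{L^2}^2\sim\|\bar\omega^\th\|_{L^2}^2+\|\wt\na\barut\|_{L^2}^2+\|\barut/r\|_{L^2}^2$, the first line of \eqref{A 5}. For the second line, I would invoke the no-swirl part of Lemma \ref{lem2.3}, namely \eqref{lem2.3.1} applied to $\wt{\vv u}$, which gives $\|\bar\omega^\th\|_{L^2}^2\sim\|\wt\na\bar u^r\|_{L^2}^2+\|\wt\na\bar u^z\|_{L^2}^2+\|\bar u^r/r\|_{L^2}^2$; substituting this into the first line yields the second.

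The one point needing care — and the main (minor) obstacle — is the justification of the vanishing boundary term in \eqref{crossterm}, i.e. that the hypothesis $\bar{\vv u}\in H^1$ genuinely forces $\barut/r\in L^2$ (equivalently $|\na\bar{\vv u}|\gtrsim|\barut|/r$ pointwise, by the same $r^{-1}\pa_\th$ argument used in Lemma \ref{lem2.3} since $\pa_\th(\barut\vv e_\th)=-\barut\vv e_r$), and that together with $\barut\in L^2$ this yields the limits $\barut|_{r=0}=\barut|_{r=\infty}=0$ in the trace sense needed for the integration by parts; as in Lemma \ref{lem2.3} one runs the argument first for smooth compactly supported $\bar{\vv u}$ and passes to the limit. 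Everything else is a direct combination of \eqref{Biot}, the orthogonality of the cylindrical frame, and Lemma \ref{lem2.3}. I should also note that \eqref{A 5} as stated requires only $\bar{\vv u}\in H^1$, whereas Lemma \ref{lem2.3} was stated under $\wt{\vv u}\in H^2$; but its estimate \eqref{lem2.3.1} in fact uses only the $H^1$ regularity (the $H^2$ hypothesis there is needed only for \eqref{lem2.3.2}), so there is no gap.
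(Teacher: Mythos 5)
Your proposal is correct and follows essentially the same route as the paper: compute $\curl\bar{\vv u}$ in the cylindrical frame, invoke the Biot--Savart equivalence \eqref{Biot}, expand $\bigl\|\p_r\barut+\f{\barut}{r}\bigr\|_{L^2}^2$ and kill the cross-term by the same boundary-vanishing argument, then apply \eqref{lem2.3.1} to the swirl-free part to pass to the second line of \eqref{A 5}. Your added remarks — that $\barut/r\in L^2$ follows from $\bar{\vv u}\in H^1$ via the $r^{-1}\pa_\th$ pointwise bound, and that \eqref{lem2.3.1} only needs $H^1$ regularity despite Lemma \ref{lem2.3} being stated under $H^2$ — are correct details the paper leaves implicit.
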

\begin{proof}
Notice that $\curl\bar{\vv u}=(-\p_z\barut)\vv e_r
+(\p_z\bar u^r-\p_r\bar u^z)\vv e_\th+(\p_r\barut+\f{\barut}{r})\vv e_z$,
we have
\beq\label{e 3}\begin{aligned}
\|\na\bar{\vv u}\|_{L^2}^2\thicksim\|\curl\bar{\vv u}\|_{L^2}^2
&=\|\bar{\omega}^\th\|_{L^2}^2
+\bigl\|\p_r\barut+\f{\barut}{r}\bigr\|_{L^2}^2+\|\p_z\barut\|_{L^2}^2\\
&\sim\|\wt{\na}\bar{u}^r\|_{L^2}^2+\|\wt{\na}\bar{u}^z\|_{L^2}^2
+\bigl\|\f{\bar{u}^r}{r}\bigr\|_{L^2}^2
+\bigl\|\p_r\barut+\f{\barut}{r}\bigr\|_{L^2}^2+\|\p_z\barut\|_{L^2}^2,
\end{aligned}\eeq
where we have used Lemma \ref{lem2.3} in the last step.
On the other hand, since $\bar{\vv u}\in H^1$, we have
$\barut\in L^2$ and $r^{-1}\barut\in L^2$, and thus
$\lim_{r\rightarrow0}\barut=\lim_{r\rightarrow\infty}\barut=0$,
which implies
\beno
2\int_{\R^3}\p_r\barut\cdot\f{\barut}{r}\,dx
=2\pi\int_{\R^+\times\R}\int_0^\infty\p_r|\barut|^2\,drdz
=2\pi\int_{\R}|\barut|^2\big|_{r=0}^\infty\, dz=0.
\eeno
As a result, there holds
\beno\begin{aligned}
&\bigl\|\p_r\barut+\f{\barut}{r}\bigr\|_{L^2}^2
=\|\p_r\barut\|_{L^2}^2+\bigl\|\f{\barut}{r}\bigr\|_{L^2}^2
+2\int_{\R^3}\p_r\barut\cdot\f{\barut}{r}\,dx
=\|\p_r\barut\|_{L^2}^2+\bigl\|\f{\barut}{r}\bigr\|_{L^2}^2.
\end{aligned}\eeno
Substituting this into \eqref{e 3} gives the desired estimate \eqref{A 5}. The lemma is proved.
\end{proof}

\subsection{A stability result for N-S}
In this subsection, we shall give a stability result for N-S.
There are numerous works concerning this problem,
here we only list two classical results \cite{GDP, Ponce}.

\begin{proposition}\label{stabH1}
{\sl Let $\vv v$ be a global strong solution to N-S
with initial data $\vv v_0\in H^1$, and there exists some positive constants
$A$ and $B$ such that
\begin{equation}\label{defAB}
\|\vv v\|_{L^\infty(\R^+;L^2)}+\|\nabla\vv v\|_{L^2(\R^+;L^2)}\leq A,\andf
\|\vv v\|_{L^\infty(\R^+;\dH^1)}+\|\nabla\vv v\|_{L^2(\R^+;\dH^1)}\leq B.
\end{equation}
Then there exists some small positive constant $c_0$ such that whenever
$\vv u_0\in H^1$ satisfies
\begin{equation}\label{smalldelta}
\|\vv u_0-\vv v_0\|_{L^2}\|\vv u_0-\vv v_0\|_{\dH^1}
\exp\bigl(CA^2B^2\bigr)<c_0,
\end{equation}
then N-S with $\vv u_0$ as initial data
also has a global strong solution $\vv u$. Moreover, this solution satisfies
for $s=0,1$ that
\begin{equation}\label{ineqstabL2H1}
\|\vv u-\vv v\|_{L^\infty(\R^+;\dH^s)}^2+\|\nabla(\vv u-\vv v)\|_{L^2(\R^+;\dH^s)}^2
\leq C\|\vv u_0-\vv v_0\|_{\dH^s}^2\exp\bigl(CA^2B^2\bigr),
\end{equation}
and
\begin{equation}\label{ineqLinfty}
\|\vv u-\vv v\|_{L^\infty(\R^+;L^\infty)}
\leq\|\vv u_0-\vv v_0\|_{H^2}\exp\bigl(CA^2B^2+CB^4\bigr).
\end{equation}
}\end{proposition}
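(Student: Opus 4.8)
The plan is a standard perturbation-and-continuity argument for the difference $\vv w\eqdef\vv u-\vv v$. Subtracting the two copies of \eqref{NS} and using $\vv u\cdot\na\vv u-\vv v\cdot\na\vv v=\vv u\cdot\na\vv w+\vv w\cdot\na\vv v$, I would first record that $\vv w$ solves
\[
\pa_t\vv w+\vv u\cdot\na\vv w+\vv w\cdot\na\vv v-\D\vv w+\na Q=\vv 0,\qquad\dive\vv w=0,\qquad\vv w|_{t=0}=\vv u_0-\vv v_0 .
\]
Since $\vv u_0\in H^1$, the classical local theory (cf. \cite{GDP, Ponce}) gives a unique strong solution $\vv u$ on a maximal interval $[0,T^*)$, with $T^*=\infty$ as soon as $\|\na\vv u(t)\|_{L^2}$ stays bounded, because the continuation of an $H^1$ solution is controlled by its $\dH^{1/2}$ norm and $\|\vv u\|_{\dH^{1/2}}\lesssim\|\vv u\|_{L^2}^{1/2}\|\vv u\|_{\dH^1}^{1/2}$. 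So it suffices to establish the a priori bounds \eqref{ineqstabL2H1} on $[0,T^*)$. The $s=0$ case is immediate: testing the $\vv w$-equation against $\vv w$ kills the transport and pressure terms by $\dive\vv u=0$, and estimating $|\int\vv w\otimes\vv w:\na\vv v|$ with the $3$-D Gagliardo--Nirenberg inequality $\|\vv w\|_{L^4}^2\lesssim\|\vv w\|_{L^2}^{1/2}\|\na\vv w\|_{L^2}^{3/2}$ plus Young's inequality leaves $\f{d}{dt}\|\vv w\|_{L^2}^2+\|\na\vv w\|_{L^2}^2\lesssim\|\na\vv v\|_{L^2}^4\|\vv w\|_{L^2}^2$; since $\int_0^\infty\|\na\vv v\|_{L^2}^4\,dt\le\|\na\vv v\|_{L^\infty(L^2)}^2\,\|\na\vv v\|_{L^2(L^2)}^2\le A^2B^2$ by \eqref{defAB}, Gr\"onwall gives \eqref{ineqstabL2H1} for $s=0$. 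I will write the resulting bound as $\|\vv w\|_{L^\infty_t(L^2)}^2+\|\na\vv w\|_{L^2_t(L^2)}^2\le\cD^2\eqdef C\|\vv u_0-\vv v_0\|_{L^2}^2\exp(CA^2B^2)$.

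For the $\dH^1$ estimate I would test against $-\D\vv w$ and split $\vv u\cdot\na\vv w=\vv v\cdot\na\vv w+\vv w\cdot\na\vv w$. Estimating the three resulting cubic terms by H\"older's inequality (exponents $6,3,2$) together with $\|\vv f\|_{L^6}\lesssim\|\na\vv f\|_{L^2}$ and $\|\na\vv f\|_{L^3}\lesssim\|\na\vv f\|_{L^2}^{1/2}\|\D\vv f\|_{L^2}^{1/2}$, then absorbing the dissipation via Young, leads to
\[
\f{d}{dt}\|\na\vv w\|_{L^2}^2+\|\D\vv w\|_{L^2}^2\lesssim\bigl(\|\na\vv v\|_{L^2}^4+\|\na\vv v\|_{L^2}\|\D\vv v\|_{L^2}+\|\na\vv w\|_{L^2}^4\bigr)\|\na\vv w\|_{L^2}^2 .
\]
The supercritical self-interaction term $\|\na\vv w\|_{L^2}^4\|\na\vv w\|_{L^2}^2$ is where the smallness hypothesis \eqref{smalldelta} must enter, through a continuity argument. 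Set $M\eqdef 2C\|\vv u_0-\vv v_0\|_{\dH^1}^2\exp(CA^2B^2)$ and $T^{**}\eqdef\sup\{T<T^*:\|\na\vv w\|_{L^\infty_T(L^2)}^2\le M\}$. On $[0,T^{**})$ the $L^2$ bound gives $\int_0^T\|\na\vv w\|_{L^2}^4\,dt\le M\cD^2$, whereas $\int_0^\infty(\|\na\vv v\|_{L^2}^4+\|\na\vv v\|_{L^2}\|\D\vv v\|_{L^2})\,dt\lesssim A^2B^2$ (Cauchy--Schwarz in time on the cross term, with the resulting constant absorbed into the leading constant of the final bound), so Gr\"onwall yields $\|\na\vv w\|_{L^\infty_T(L^2)}^2+\|\D\vv w\|_{L^2_T(L^2)}^2\le C\|\vv u_0-\vv v_0\|_{\dH^1}^2\exp\bigl(CA^2B^2+CM\cD^2\bigr)$. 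Because $M\cD^2\lesssim\|\vv u_0-\vv v_0\|_{\dH^1}^2\|\vv u_0-\vv v_0\|_{L^2}^2\exp(CA^2B^2)<c_0^2$ by \eqref{smalldelta}, taking $c_0$ small forces $\exp(CM\cD^2)\le\f32$, hence the right-hand side is $<M$ on $[0,T^{**})$; a standard bootstrap then gives $T^{**}=T^*$. Combined with $\|\na\vv u\|_{L^\infty_{T^*}(L^2)}\le\sqrt M+B$ and the energy bound on $\|\vv u\|_{L^2}$, this forces $T^*=\infty$, and proves global existence, uniqueness (classical in this class), and \eqref{ineqstabL2H1} for $s=1$.

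Finally, for \eqref{ineqLinfty} --- which is nontrivial only when $\vv u_0-\vv v_0\in H^2$ --- I would propagate one more derivative: testing the $\vv w$-equation against $\D^2\vv w$ and handling the nonlinear terms with H\"older, the $3$-D Sobolev embeddings, the bounds $\vv v\in L^\infty(\dH^1)\cap L^2(\dH^2)$ from \eqref{defAB}, and the $\dH^1$-and-below control of $\vv w$ already obtained (which makes the $\vv w$ self-interaction terms carry small factors), one arrives at a differential inequality for $\|\D\vv w\|_{L^2}^2$ whose Gr\"onwall coefficient has $L^1_t$ norm $\lesssim A^2B^2+B^4$ (the $B^4$ coming from $\int_0^\infty\|\vv v\|_{\dH^1}^2\|\vv v\|_{\dH^2}^2\,dt$); this gives $\|\vv w\|_{L^\infty_t(\dH^2)}\le\|\vv u_0-\vv v_0\|_{H^2}\exp(CA^2B^2+CB^4)$ up to lower-order terms already controlled, and the $3$-D embedding $H^2\hookrightarrow L^\infty$ then yields \eqref{ineqLinfty}. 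The one genuinely delicate step is closing the $\dH^1$ continuity argument: the self-interaction $\vv w\cdot\na\vv w$ is supercritical in this scaling, and it is precisely the product structure $\|\cdot\|_{L^2}\|\cdot\|_{\dH^1}$ in \eqref{smalldelta}, paired with the $L^2$-dissipation bound $\cD$, that makes $M\cD^2$ small enough to run the bootstrap; every other estimate is routine energy estimation.
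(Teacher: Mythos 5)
Your proof of global existence and of \eqref{ineqstabL2H1} is correct and follows essentially the same route as the paper: the same difference equation for $\vv w=\vv u-\vv v$, the same $L^2$ energy estimate with Gr\"onwall coefficient $\|\na\vv v\|_{L^2}^4$, and the same continuity/bootstrap argument at the $\dH^1$ level driven by the product smallness \eqref{smalldelta}. The only cosmetic difference there is how the supercritical self-interaction is closed: the paper keeps it in the form $C\|\vv w\|_{L^2}^{1/2}\|\na\vv w\|_{L^2}^{1/2}\|\D\vv w\|_{L^2}^2$ and absorbs it into the dissipation once \eqref{smalldelta} forces the prefactor below $\f12$, whereas you apply Young to produce the coefficient $\|\na\vv w\|_{L^2}^4$ and control its time integral by $M\cD^2\lesssim c_0^2$; both hinge on exactly the same quantity $\|\vv u_0-\vv v_0\|_{L^2}\|\vv u_0-\vv v_0\|_{\dH^1}\exp(CA^2B^2)$. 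Where you genuinely diverge is the $L^\infty$ bound \eqref{ineqLinfty}: the paper derives an $L^4$ estimate for the vorticity difference by testing its equation against $|\curl\vv w|^2\curl\vv w$ and then interpolates $\|\vv w\|_{L^\infty}\lesssim\|\curl\vv w\|_{L^2}^{1/3}\|\curl\vv w\|_{L^4}^{2/3}$, while you propose a direct $\dH^2$ energy estimate followed by $H^2\hookrightarrow L^\infty$. Your route is viable and arguably more elementary (no vortex-stretching terms to organize), but it is the sketchier half of your argument: to land on the stated exponent $CA^2B^2+CB^4$ you must split $\vv u=\vv v+\vv w$ \emph{before} applying Cauchy--Schwarz in time, so that every Gr\"onwall coefficient integrates to one of $\int_0^\infty\|\D\vv v\|_{L^2}^2\,dt\le B^2$, $\int_0^\infty\|\na\vv v\|_{L^2}\|\D\vv v\|_{L^2}\,dt\le AB$, $\int_0^\infty\|\na\vv v\|_{L^2}^2\|\D\vv v\|_{L^2}^2\,dt\le B^4$, $\int_0^\infty\|\na\vv v\|_{L^2}^4\,dt\le A^2B^2$, or $\int_0^\infty\|\na\vv w\|_{L^2}\|\D\vv w\|_{L^2}\,dt\le\cD\sqrt M\lesssim c_0$, all of which are $\lesssim 1+A^2B^2+B^4$; a crude bound such as $\int_0^\infty\|\na\vv u\|_{L^2}\|\D\vv u\|_{L^2}\,dt\le(A+\cD)(B+\sqrt M)$ would leave uncontrolled cross terms like $A\sqrt M$, since only the product $\cD\sqrt M$ (not the individual factors) is made small by \eqref{smalldelta}. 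This is a bookkeeping point rather than a gap in the idea, but it is precisely the kind of detail the paper's vorticity formulation is designed to sidestep.
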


\begin{proof}
{\bf Step 1. The proof of \eqref{ineqstabL2H1}.}
Let us denote $\du\eqdef\vv u-\vv v$, then we can find
\begin{equation}\label{eqtdu}
\left\{\begin{aligned}
& \pa_t\du+\vv u\cdot\nabla\du+\du\cdot\nabla\vv v
-\D\du+\nabla\Pi=0,\quad \dive\du=0,\\
& \du|_{t=0}=\vv u_0-\vv v_0,
\end{aligned}\right.
\end{equation}
for some properly chosen $\Pi$.
By taking $L^2$ inner product of \eqref{eqtdu} with $\du$, we obtain
\begin{align*}
\f12\f{d}{dt}\|\du\|_{L^2}^2+\|\nabla\du\|_{L^2}^2
&=-\int_{\R^3}(\du\cdot\nabla\vv v)\cdot\du\,dx\\
&\leq\|\du\|_{L^4}^2\|\nabla\vv v\|_{L^2}\\
&\leq\f12\|\nabla\du\|_{L^2}^2
+C\|\du\|_{L^2}^2\|\nabla\vv v\|_{L^2}^4.
\end{align*}
After subtracting $\f12\|\nabla\du\|_{L^2}^2$ on both sides,
then applying Gronwall's inequality leads to
\begin{equation}\begin{split}\label{estiduL2}
\|\du\|_{L^\infty_t(L^2)}^2+\|\nabla\du\|_{L^2_t(L^2)}^2
&\leq\|\vv u_0-\vv v_0\|_{L^2}^2
\exp\bigl(C\|\nabla\vv v\|_{L^2_t(L^2)}^2\|\vv v\|_{L^\infty_t(\dH^1)}^2\bigr)\\
&\leq\|\vv u_0-\vv v_0\|_{L^2}^2\exp\bigl(CA^2B^2\bigr).
\end{split}\end{equation}

While by taking $L^2$ inner product of \eqref{eqtdu} with $-\D\vv h$, we have
\beno\begin{aligned}
\f12\f{d}{dt}\|\na\vv h\|_{L^2}^2+\|\D\vv h\|_{L^2}^2
&=(\vv u\cdot\na\vv h\,|\,\D\vv h)+(\vv h\cdot\na\vv v\,|\,\D\vv h)\\
&=-\sum_{j=1}^3(\p_j(\vv v+\vv h)\cdot\na\vv h\,|\,\p_j\vv h)+(\vv h\cdot\na\vv v\,|\,\D\vv h)\\
&\lesssim\|\na\vv h\|_{L^2}^{\f32}\|\na\vv h\|_{L^6}^{\f32}
+\|\na\vv v\|_{L^2}\bigl(\|\na\vv h\|_{L^4}^2+\|\vv h\|_{L^\infty}\|\D\vv h\|_{L^2}\bigr)\\
&\lesssim\|\vv h\|_{L^2}^{\f12}\|\na\vv h\|_{L^2}^{\f12}\|\D\vv h\|_{L^2}^2
+\|\na\vv v\|_{L^2}\|\na\vv h\|_{L^2}^{\f12}\|\D\vv h\|_{L^2}^{\f32}.
\end{aligned}\eeno
Then by using Young's inequality to the last term, we get
\beq\label{B 7}\begin{split}
\f{d}{dt}\|\na\vv h\|_{L^2}^2+\f32\|\D\vv h\|_{L^2}^2
\leq C\|\vv h\|_{L^2}^{\f12}\|\na\vv h\|_{L^2}^{\f12}\|\D\vv h\|_{L^2}^2+C\|\na\vv v\|_{L^2}^4\|\na\vv h\|_{L^2}^2.
\end{split}\eeq

On the other hand, by the local well-posedness result, the following set is not empty:
$$\cD\eqdef\bigl\{\,T\in\R^+:\mbox{ there holds }
\|\nabla\du\|_{L^\infty_T(L^2)}^2+\|\D\du\|_{L^2_T(L^2)}^2
\leq2\|\nabla(\vv u_0-\vv v_0)\|_{L^2}^2
\exp\bigl(CA^2B^2\bigr)\,\bigr\}.$$
Let us take $T^*=\sup\{T:T\in\cD\}$. If $T^*<\infty$,
then for any $t\leq T^*$, we can use the estimate \eqref{estiduL2}
and the smallness condition \eqref{smalldelta} with $c_0$ sufficiently small to deduce
\begin{align*}
C\|\du\|_{L^\infty_t(L^2)}^{\f12}\|\nabla\du\|_{L^\infty_t(L^2)}^{\f12}
\leq C\|\vv u_0-\vv v_0\|_{L^2}^{\f12}\|\nabla(\vv u_0-\vv v_0)\|_{L^2}^{\f12}
\exp\bigl(CA^2B^2\bigr)\leq Cc_0^{\f12}<\f12.
\end{align*}
By substituting this into \eqref{B 7},
and then using Gronwall's inequality, we achieve
\begin{align*}
\|\nabla\du(t)\|_{L^2}^2+\|\D\du\|_{L^2_t(L^2)}^2
&\leq\|\nabla(\vv u_0-\vv v_0)\|_{L^2}^2
\exp\bigl(C\|\nabla\vv v\|_{L^\infty_t(L^2)}^2
\|\nabla\vv v\|_{L^2_t(L^2)}^2\bigr)\\
&\leq\|\nabla(\vv u_0-\vv v_0)\|_{L^2}^2
\exp\bigl(CA^2B^2\bigr),
\end{align*}
which contradicts to the definition of $T^*$. Thus there
must be $T^*=\infty$, and we have
\begin{equation}\label{estidomL2}
\|\nabla\du\|_{L^\infty(\R^+;L^2)}^2+\|\D\du\|_{L^2(\R^+;L^2)}^2
\leq2\|\nabla(\vv u_0-\vv v_0)\|_{L^2}^2
\exp\bigl(CA^2B^2\bigr).
\end{equation}

\smallskip

{\bf Step 2. The proof of \eqref{ineqLinfty}.}
Let us denote $\dom\eqdef\curl \du$
and $\vv \omega\eqdef\curl\vv v$, which satisfy
\begin{equation}\label{eqtdo}
\left\{\begin{split}
& \pa_t\dom+\vv u\cdot\nabla\dom+\du\cdot\nabla\vv \omega
-\D\dom=\dom\cdot\nabla\vv u+\vv\omega\cdot\nabla\du,\\
& \dom|_{t=0}=\curl(\vv u_0-\vv v_0).
\end{split}\right.
\end{equation}
By taking $L^2$ inner product of \eqref{eqtdo} wit $|\dom|^2\dom$, we get
\begin{equation}\label{estidomL4}
\f14\f{d}{dt}\|\dom\|_{L^4}^4+\bigl\||\dom|\nabla\dom\bigr\|_{L^2}^2
+\f12\bigl\|\nabla|\dom|^2\bigr\|_{L^2}^2
\leq\Bigl|\int_{\R^3}(\dom\cdot\nabla\vv u+\vv\omega\cdot\nabla\du
-\du\cdot\nabla\vv\omega)\cdot|\dom|^2\dom\,dx\Bigr|.
\end{equation}
The terms on the right-hand side can be handled as follows:
\begin{align*}
\Bigl|\int_{\R^3}(\dom\cdot\nabla\vv u)\cdot|\dom|^2\dom\,dx\Bigr|
&\leq\bigl\||\dom|^2\bigr\|_{L^3}\bigl\||\dom|^2\bigr\|_{L^6}
\|\nabla(\vv v+\du)\|_{L^2}\\
&\leq\f18\bigl\|\nabla|\dom|^2\bigr\|_{L^2}^2
+C\bigl\||\dom|^2\bigr\|_{L^2}^2\bigl(\|\vv\omega\|_{L^2}^4+\|\nabla\du\|_{L^2}^4\bigr),
\end{align*}
and by using the Biot-Savart law that
\begin{align*}
\Bigl|\int_{\R^3}(\vv\omega\cdot\nabla\du)\cdot|\dom|^2\dom\,dx\Bigr|
&\leq\|\vv\omega\|_{L^2}\|\nabla\du\|_{L^6}
\bigl\||\dom|^2\bigr\|_{L^6}\|\dom\|_{L^6}\\
&\leq C\|\vv\omega\|_{L^2}\bigl\||\dom|^2\bigr\|_{L^3}
\bigl\||\dom|^2\bigr\|_{L^6}\\
&\leq\f18\bigl\|\nabla|\dom|^2\bigr\|_{L^2}^2
+C\bigl\||\dom|^2\bigr\|_{L^2}^2\|\vv\omega\|_{L^2}^4,
\end{align*}
and by using integration by parts together with the divergence-free
condition that
\begin{align*}
\Bigl|\int_{\R^3}(&\du\cdot\nabla\vv\omega)\cdot|\dom|^2\dom\,dx\Bigr|
=\Bigl|\int_{\R^3}(\vv\omega\otimes\du):\nabla\bigl(|\dom|^2\dom\bigr)\,dx\Bigr|\\
&\leq C\bigl(\bigl\|\nabla|\dom|^2\bigr\|_{L^2}
+\bigl\||\dom|\nabla\dom\bigr\|_{L^2}\bigr)\|\dom\|_{L^{12}}
\|\du\|_{L^{12}}\|\vv\omega\|_{L^3}\\
&\leq C\bigl(\bigl\|\nabla|\dom|^2\bigr\|_{L^2}
+\bigl\||\dom|\nabla\dom\bigr\|_{L^2}\bigr)\bigl\|\nabla|\dom|^2\bigr\|_{L^2}^{\f12}
\|\du\|_{L^4}^{\f12}\|\nabla \du\|_{L^4}^{\f12}
\|\vv\omega\|_{L^2}^{\f12}\|\nabla\vv\omega\|_{L^2}^{\f12}\\
&\leq\f18\bigl(\bigl\|\nabla|\dom|^2\bigr\|_{L^2}^2
+\bigl\||\dom|\nabla\dom\bigr\|_{L^2}^2\bigr)
+C\bigl(\|\du\|_{L^4}^4+\|\nabla\du\|_{L^4}^4\bigr)
\|\vv\omega\|_{L^2}^2\|\nabla\vv\omega\|_{L^2}^2.
\end{align*}
Now by substituting the above three estimates into \eqref{estidomL4},
and using the relation that
$$\|\nabla\du\|_{L^4}\lesssim\|\curl\du\|_{L^4}=\|\dom\|_{L^4},$$
we get
\begin{align*}
\f{d}{dt}\|\dom\|_{L^4}^4+\bigl\||\dom|\nabla\dom\bigr\|_{L^2}^2
+\bigl\|\nabla|\dom|^2\bigr\|_{L^2}^2
\lesssim\|\dom\|_{L^4}^4\bigl(\|\vv\omega\|_{L^2}^2\|\vv\omega\|_{H^1}^2
+\|\nabla\du\|_{L^2}^4\bigr)
+\|\du\|_{L^4}^4\|\vv\omega\|_{L^2}^2\|\nabla\vv\omega\|_{L^2}^2.
\end{align*}
Then Gronwall's inequality together with the bounds
\eqref{defAB} and \eqref{ineqstabL2H1} leads to
\begin{equation}\begin{split}\label{estidomL4}
\|\dom\|_{L^\infty_t(L^4)}^4
\lesssim&\bigl(\|\nabla(\vv u_0-\vv v_0)\|_{L^4}^4
+\|\du\|_{L^\infty_t(L^4)}^4
\|\vv\omega\|_{L^\infty_t(L^2)}^2\|\nabla\vv\omega\|_{L^2_t(L^2)}^2\bigr)\\
&\times\exp\bigl(C\|\vv\omega\|_{L^\infty_t(L^2)}^2\|\vv\omega\|_{L^2_t(H^1)}^2
+C\|\nabla\du\|_{L^\infty_t(L^2)}^2\|\nabla\du\|_{L^2_t(L^2)}^2\bigr)\\
\lesssim&\bigl(\|\vv u_0-\vv v_0\|_{\dH^{\f74}}^4
+B^4\|\vv u_0-\vv v_0\|_{L^2}\|\vv u_0-\vv v_0\|_{\dH^1}^3\bigr)
\exp\bigl(CA^2B^2+CB^4+Cc_0^2\bigr)\\
\lesssim& \|\vv u_0-\vv v_0\|_{H^2}^4
\exp\bigl(CA^2B^2+CB^4\bigr).
\end{split}\end{equation}
Now interpolating between \eqref{estidomL2} and \eqref{estidomL4} gives
\begin{align*}
\|\du\|_{L^\infty_t(L^\infty)}
&\lesssim\|\nabla\du\|_{L^\infty_t(L^2)}^{\f13}
\|\nabla\du\|_{L^\infty_t(L^4)}^{\f23}\\
&\lesssim\|\dom\|_{L^\infty_t(L^2)}^{\f13}
\|\dom\|_{L^\infty_t(L^4)}^{\f23}
\lesssim\|\vv u_0-\vv v_0\|_{H^2}
\exp\bigl(CA^2B^2+CB^4\bigr),
\end{align*}
which is exactly the desired estimate \eqref{ineqLinfty}.
This completes the proof of this proposition.
\end{proof}

\setcounter{equation}{0}

\section{The proof of Theorem \ref{thm1}}

The purpose of this section is to prove Theorem \ref{thm1}
by using perturbation method.

\subsection{Global solvability of \eqref{eqtbaru}}
Let us first introduce the
velocity field $\wt{\vv u}(x)=\wtur(r,z)\vv e_r+\wtuz(r,z)\vv e_z$ satisfying
the following axisymmetric without swirl N-S:
\begin{equation}\label{eqtwtu}
\left\{
\begin{split}
& \pa_t\wt u^r+(\wt u^r\pa_r+\wt u^z\pa_z)\wt u^r
-(\pa_r^2+\pa_z^2+\frac 1r\pa_r-\frac{1}{r^2})\wt u^r+\pa_r\wt P=0,\\
& \pa_t\wt u^z+(\wt u^r\pa_r+\wt u^z\pa_z)\wt u^z
-(\pa_r^2+\pa_z^2+\frac 1r\pa_r)\wt u^z+\pa_z\wt P=0,\\
& \pa_r\wt u^r+\frac 1r\wt u^r+\pa_z\wt u^z=0,\\
& \wt u^r|_{t=0}=\wtur_0=\cM(u_0^r),\
\wt u^z|_{t=0}=\wtuz_0=\cM(u_0^z).
\end{split}
\right.
\end{equation}
\begin{proposition}\label{P 1}
{\sl Under the assumption of Theorem \ref{thm1},
\eqref{eqtwtu} has a unique global solution $\wt{\vv u}$ such that
for any time $t>0$, there hold
\begin{equation}\label{estiwtuL2}
\|\wt{\vv u}\|_{L^\infty_t(L^2)}^2+2\|\nabla\wt{\vv u}\|_{L^2_t(L^2)}^2
\leq\|{\vv u}_0\|_{L^2}^2,
\end{equation}
\begin{equation}\label{estiwtudH1}
\|\wt{\vv u}\|_{L^\infty_t(\dH^1)}^2+\|\nabla\wt{\vv u}\|_{L^2_t(\dH^1)}^2
\lesssim\|{\vv u}_0\|_{\dH^1}^2
+\|{\vv u}_0\|_{L^2}^4
\|{\vv u}_0\|_{\dH^2}^2.
\end{equation}
}\end{proposition}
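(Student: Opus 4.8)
The plan is to deduce \eqref{estiwtuL2} from the basic energy balance, to obtain \eqref{estiwtudH1} from a weighted vorticity estimate that crucially uses the special transport structure \eqref{otr} of $\wtot/r$, and then to upgrade the local solution of the swirl-free axisymmetric system \eqref{eqtwtu} (equivalently, a $2$-D problem for the stream function) to a global one by a continuation argument based on these a priori bounds.

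For \eqref{estiwtuL2}, I take the $L^2$ inner product of the two momentum equations in \eqref{eqtwtu} with $\wt{\vv u}$. Interpreting the Laplace-type operators in \eqref{eqtwtu} as the cylindrical components of $-\D\wt{\vv u}$, the viscous contribution is exactly $\|\na\wt{\vv u}\|_{L^2}^2$ with no boundary term (since $\wt{\vv u}\in H^2$), the convection term vanishes because $\dive\wt{\vv u}=0$, and the pressure term vanishes after integration by parts. This gives $\f{d}{dt}\|\wt{\vv u}\|_{L^2}^2+2\|\na\wt{\vv u}\|_{L^2}^2=0$, hence \eqref{estiwtuL2} and, what matters for the next step, the dissipation bound $\|\na\wt{\vv u}\|_{L^2_t(L^2)}^2\le\f12\|\vv u_0\|_{L^2}^2$.

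For \eqref{estiwtudH1} I work with the scalar vorticity $\wtot=\p_z\wtur-\p_r\wtuz$, i.e.\ $\curl\wt{\vv u}=\wtot\vv e_\theta$. By Lemma \ref{lem2.3} one has $\|\na\wt{\vv u}\|_{L^2}\sim\|\wtot\|_{L^2}$ and $\|\na^2\wt{\vv u}\|_{L^2}^2\sim\|\wt\na\wtot\|_{L^2}^2+\|\wtot/r\|_{L^2}^2$, so it suffices to control these. The point is the structural feature of the no-swirl case: $\zeta\eqdefa\wtot/r$ solves the transport--diffusion equation \eqref{otr} (with $\wtur,\wtuz$ in place of $u^r,u^z$); since $\wt{\vv u}\cdot\na$ transports along a divergence-free field and $\D+\f2r\p_r$ obeys the maximum principle, every $\|\zeta(t)\|_{L^p}$ is non-increasing, and the $L^2$ estimate of \eqref{otr} in fact yields $\|\zeta(t)\|_{L^2}^2+2\|\na\zeta\|_{L^2_t(L^2)}^2\le\|\zeta(0)\|_{L^2}^2$; in particular $\|\zeta\|_{L^\infty_t(L^2)}\le\|\cM(\omega^\theta_0)/r\|_{L^2}\lesssim\|\na^2\wt{\vv u}_0\|_{L^2}\lesssim\|\vv u_0\|_{\dH^2}$ by \eqref{lem2.3.2} and the $L^2$-boundedness of $\cM$ (Lemma \ref{lemM}). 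The equation for $\wtot$ reads $\p_t\wtot+(\wtur\p_r+\wtuz\p_z)\wtot-(\D-\f1{r^2})\wtot=\f{\wtur}{r}\wtot$, so testing it against $\wtot$ (equivalently, testing $\D(\wtot\vv e_\theta)$ against $\wtot\vv e_\theta$) gives
\beq\label{planvorticity}
\f12\f{d}{dt}\|\wtot\|_{L^2}^2+\|\wt\na\wtot\|_{L^2}^2+\bigl\|\f{\wtot}{r}\bigr\|_{L^2}^2
=\int_{\R^3}\f{\wtur}{r}(\wtot)^2\,dx=\int_{\R^3}\wtur\,\zeta\,\wtot\,dx.
\eeq
The right-hand side of \eqref{planvorticity} is the only obstruction: using H\"older together with the Agmon inequality $\|\wtur\|_{L^\infty}\lesssim\|\na\wt{\vv u}\|_{L^2}^{1/2}\|\na^2\wt{\vv u}\|_{L^2}^{1/2}$, the Hardy bound $\|\wtur/r\|_{L^2}\lesssim\|\na\wt{\vv u}\|_{L^2}$ from Lemma \ref{lem2.3}, the relation $\|\na^2\wt{\vv u}\|_{L^2}^2\sim\|\wt\na\wtot\|_{L^2}^2+\|\wtot/r\|_{L^2}^2$, and a Gagliardo--Nirenberg interpolation, one estimates it by $\f12\bigl(\|\wt\na\wtot\|_{L^2}^2+\|\wtot/r\|_{L^2}^2\bigr)$ plus a remainder of the form $C\,\|\na\wt{\vv u}\|_{L^2}^2$ times a quantity controlled by the conserved norms $\|\zeta\|_{L^2}$ (and $\|\wt{\vv u}\|_{L^2}$). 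After absorbing the dissipative part and integrating \eqref{planvorticity} in time, one invokes $\|\na\wt{\vv u}\|_{L^2_t(L^2)}^2\le\f12\|\vv u_0\|_{L^2}^2$, $\|\zeta\|_{L^\infty_t(L^2)}\lesssim\|\vv u_0\|_{\dH^2}$, $\|\na\zeta\|_{L^2_t(L^2)}\lesssim\|\vv u_0\|_{\dH^2}$ and $\|\wt{\vv u}\|_{L^\infty_t(L^2)}\le\|\vv u_0\|_{L^2}$, so that the remainder is $\lesssim\|\vv u_0\|_{L^2}^4\|\vv u_0\|_{\dH^2}^2$ after an elementary interpolation; since $\|\wtot(0)\|_{L^2}\sim\|\na\wt{\vv u}_0\|_{L^2}\lesssim\|\vv u_0\|_{\dH^1}$, this yields $\|\wtot\|_{L^\infty_t(L^2)}^2+\|\wt\na\wtot\|_{L^2_t(L^2)}^2+\|\wtot/r\|_{L^2_t(L^2)}^2\lesssim\|\vv u_0\|_{\dH^1}^2+\|\vv u_0\|_{L^2}^4\|\vv u_0\|_{\dH^2}^2$, which is exactly \eqref{estiwtudH1} after translating through Lemma \ref{lem2.3}.

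Finally, global solvability follows by continuation: the classical theory provides a unique local-in-time strong solution of \eqref{eqtwtu}, and the a priori bounds above show that $\|\wt{\vv u}\|_{L^\infty_t(H^1)}$ and $\|\wt{\vv u}\|_{L^2_t(\dH^2)}$ stay finite on any finite interval, which rules out blow-up and lets the solution extend to all of $\R_+$ while satisfying \eqref{estiwtuL2}--\eqref{estiwtudH1}. The main obstacle is precisely the control of the stretching term $\int_{\R^3}\f{\wtur}{r}(\wtot)^2\,dx$ in \eqref{planvorticity}: without the conservation of $\|\wtot/r\|_{L^p}$ coming from \eqref{otr}, an $\dH^1$ estimate would only reproduce the generic $3$-D inequality $\f{d}{dt}\|\na\wt{\vv u}\|_{L^2}^2\lesssim\|\na\wt{\vv u}\|_{L^2}^6$ and hence just local existence; it is the swirl-free structure that makes the global, polynomial-in-data bound possible.
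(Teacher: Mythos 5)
Your overall strategy coincides with the paper's: the energy identity for \eqref{estiwtuL2}, the transport--diffusion equation for $\zeta\eqdef\wtot/r$ giving a conserved $L^2$ bound, and an $L^2$ vorticity estimate for $\wtot$ closed by that conserved quantity, with global existence by continuation. The gap is in the one step that actually produces the polynomial bound \eqref{estiwtudH1}, namely the stretching term $\int_{\R^3}\f{\wtur}{r}|\wtot|^2\,dx$. You assert that after absorbing the dissipation the remainder has the form $C\|\na\wt{\vv u}\|_{L^2}^2$ times a quantity controlled by the conserved norms $\|\zeta\|_{L^2}$, $\|\wt{\vv u}\|_{L^2}$. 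The Agmon-based splitting you lead with gives, after Young's inequality, $C\|\na\wt{\vv u}\|_{L^2}^2\|\zeta\|_{L^2}^{4/3}$, whose time integral is $\lesssim\|\vv u_0\|_{L^2}^2\|\vv u_0\|_{\dH^2}^{4/3}$; this is \emph{not} dominated by $\|\vv u_0\|_{\dH^1}^2+\|\vv u_0\|_{L^2}^4\|\vv u_0\|_{\dH^2}^2$ (take $\|\vv u_0\|_{L^2}=\delta$, $\|\vv u_0\|_{\dH^1}=1$, $\|\vv u_0\|_{\dH^2}=\delta^{-2}$, which is admissible since $\|\vv u_0\|_{\dH^1}^2\le\|\vv u_0\|_{L^2}\|\vv u_0\|_{\dH^2}$; then the former is $\delta^{-2/3}$ while the latter is $2$). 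To reach $\|\vv u_0\|_{L^2}^4\|\vv u_0\|_{\dH^2}^2$ one must put the dissipation of the $\zeta$-equation, not of the energy, into the remainder: estimate $\bigl|\int\f{\wtur}{r}|\wtot|^2\,dx\bigr|\leq\|\wtur\|_{L^2}\|\wtot\|_{L^6}\|\zeta\|_{L^2}^{1/2}\|\zeta\|_{L^6}^{1/2}$ and apply Young with exponents $(2,4,4)$, so that the remainder is $C\|\wtur\|_{L^2}^4\|\na\zeta\|_{L^2}^2$, which lies in $L^1_t$ with integral $\leq\|\vv u_0\|_{L^2}^4\|\na\zeta\|_{L^2_t(L^2)}^2\lesssim\|\vv u_0\|_{L^2}^4\|\vv u_0\|_{\dH^2}^2$ and requires no Gronwall factor. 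Your final accounting lists exactly the right four quantities, but the pointwise-in-time estimate you describe is not the one that produces them.

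A second, smaller point: the chain $\|\zeta(0)\|_{L^2}\lesssim\|\na^2\wt{\vv u}_0\|_{L^2}\lesssim\|\vv u_0\|_{\dH^2}$ needs more than ``the $L^2$-boundedness of $\cM$''. The second inequality requires that the axisymmetrization $\vv u_0\mapsto\cM(u_0^r)\vv e_r+\cM(u_0^z)\vv e_z$ be bounded on $\dH^2$; this is true because it commutes with the vector Laplacian (the coupling terms $\f{2\pa_\th u^\th}{r^2}$ and $\f{2\pa_\th u^r}{r^2}$, as well as $\f{\pa_\th^2}{r^2}$, are annihilated by $\cM$), but that must be argued, since $\na^2\wt{\vv u}_0$ is not the componentwise $\cM$-average of $\na^2\vv u_0$. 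Note also that the naive scalar estimate $\|\cM(\ot_0/r)\|_{L^2}\leq\|\ot_0/r\|_{L^2}$ is useless here because $\ot_0$ of a non-axisymmetric field need not vanish on the axis. The paper sidesteps all of this by writing $-\f{\ot_0}{r}+\f{\pa_\th\omega_0^r}{r}=\f{\pa_\th}{r}\vv\omega_0\cdot\vv e_r$, using $\cM\bigl(\f{\pa_\th\omega_0^r}{r}\bigr)=0$, and bounding the expression pointwise by $|\na\vv\omega_0|$; you may either adopt that identity or supply the commutation argument.
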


\begin{proof}
The first estimate \eqref{estiwtuL2} is nothing but the energy equality
and the fact that
$$\|\wt{\vv u}_0\|_{L^2}^2=\|\cM(u_0^r)\|_{L^2}^2+\|\cM(u_0^z)\|_{L^2}^2
\leq\|{\vv u}_0\|_{L^2}^2.$$

Next, the Biot-Savart law tells us that we can use
$\curl\wt{\vv u}=\wtot\vv e_\th=(\pa_z\wtur-\pa_r\wtuz)\vv e_\th$
to represent $\widetilde{\vv u}$.
Hence we can reformulate the System \eqref{eqtwtu} as
\begin{equation}\label{eqtwtot}
\left\{
\begin{split}
& \pa_t \wtot+(\wtur\pa_r+\wtuz\pa_z) \wtot
-(\pa_r^2+\pa_z^2+\frac 1r\pa_r-\frac{1}{r^2})\wtot
-\frac{\wtur\wtot}{r}=0,\\
& \wtot|_{t=0}=\wtot_0=\cM(\pa_z u^r_0-\pa_r u^z_0).
\end{split}
\right.
\end{equation}
Then it is not difficult to verify that $\wtot/r$ satisfies
\begin{equation}\label{eqtwtotr}
\pa_t \f{\wtot}{r}+(\wtur\pa_r+\wtuz\pa_z)\f{\wtot}{r}
-(\Delta+\f 2r\pa_r)\f{\wtot}{r}=0.
\end{equation}

For a strong solution of N-S, the decay property at infinity implies $\wtot|_{r=\infty}=0$,
while the smoothness implies $\wtot|_{r=0}=0$. As a result, we have
$$-\int_{\R^3}\f2r\pa_r\f{\wtot}{r}\cdot\f{\wtot}{r}\,dx
=-2\pi\int_{\R^+\times\R}\pa_r\bigl|\f{\wtot}{r}\bigr|^2\,drdz
=2\pi\int_{\R} \lim_{r\rightarrow0}\bigl|\f{\wtot}{r}\bigr|^2\,dz\geq0.$$
In view of this, the $L^2$ energy estimate of $\wtot/r$ in \eqref{eqtwtotr} gives
\begin{equation}\label{estiwtotr'}
\bigl\|\f{\wtot}r\bigr\|_{L^\infty_t(L^2)}
+\bigl\|\nabla\f{\wtot}r\bigr\|_{L^2_t(L^2)}
\leq\bigl\|\f{\wtot_0}r\bigr\|_{L^2}.
\end{equation}
By using \eqref{Biot} and Lemma \ref{lemM}, we have
\begin{align*}
\|\nabla^2\vv u_0\|_{L^2}\sim\|\nabla\vv \omega_0\|_{L^2}
&\geq\bigl\|\f{\pa_\th}r(\vv\omega_0)\cdot\vv e_r\bigr\|_{L^2}
=\bigl\|-\f{\ot_0}r+\f{\pa_\th\omega^r_0}r\bigr\|_{L^2}\\
&\geq\bigl\|\cM\bigl(-\f{\ot_0}r+\f{\pa_\th\omega^r_0}r\bigr)\bigr\|_{L^2}
=\bigl\|\cM\bigl(-\f{\ot_0}r\bigr)\bigr\|_{L^2}
=\bigl\|\f{\wtot_0}r\bigr\|_{L^2},
\end{align*}
where $\vv \omega_0\eqdefa\curl\vv u_0$.
Substituting this into \eqref{estiwtotr'} gives
\begin{equation}\label{estiwtotr}
\bigl\|\f{\wtot}r\bigr\|_{L^\infty_t(L^2)}
+\bigl\|\nabla\f{\wtot}r\bigr\|_{L^2_t(L^2)}
\leq\|\vv u_0\|_{\dH^2}.
\end{equation}

\smallskip

Next, by taking $L^2$ inner product of \eqref{eqtwtot} with $\wtot$, we obtain
\begin{equation}\label{estiwtotL21}
\f12\f{d}{dt}\|\wtot\|_{L^2}^2+\|\nabla\wtot\|_{L^2}^2+\bigl\|\f{\wtot}{r}\|_{L^2}^2
\leq\int_{\R^3}\f{\wtur}r|\wtot|^2\,dx.
\end{equation}
For the right-hand side term, we have
\begin{align*}
\bigl|\int_{\R^3}\f{\wtur}r|\wtot|^2\,dx\bigr|
&\leq\|\wtur\|_{L^2}\|\wtot\|_{L^6}\bigl\|\f{\wtot}{r}\bigr\|_{L^2}^{\f12}
\bigl\|\f{\wtot}{r}\bigr\|_{L^6}^{\f12}\\
&\leq\f12\bigl(\|\nabla\wtot\|_{L^2}^2+\bigl\|\f{\wtot}{r}\bigr\|_{L^2}^2\bigr)
+C\|\wtur\|_{L^2}^4\|\na\bigl(\f{\wtot}{r}\bigr)\|_{L^2}^2.
\end{align*}
Substituting this into \eqref{estiwtotL21} and then integrating in time,
we deduce
\begin{equation}\label{estiwtotL22}
\|\wtot\|_{L^\infty_t(L^2)}^2+\|\nabla\wtot\|_{L^2_t(L^2)}^2
+\|\f{\wtot}{r}\|_{L^2_t(L^2)}^2
\leq\|\wtot_0\|_{L^2}^2+C\|\wtur\|_{L^\infty_t(L^2)}^4
\|\na\bigl(\f{\wtot}{r}\bigr)\|_{L^2_t(L^2)}^2.
\end{equation}

Thanks to the estimates \eqref{estiwtotr} and \eqref{estiwtuL2}
, we finally dedecue from \eqref{estiwtotL22} that
\begin{align*}
\|\wtot\|_{L^\infty_t(L^2)}^2+\|\nabla\wtot\|_{L^2_t(L^2)}^2
+\|\wtot/r\|_{L^2_t(L^2)}^2
\lesssim\|{\vv u}_0\|_{\dH^1}^2
+\|{\vv u}_0\|_{L^2}^4
\|{\vv u}_0\|_{\dH^2}^2,
\end{align*}
which together with Lemma \ref{lem2.3} leads to the second desired
estimate \eqref{estiwtudH1}.
\end{proof}
\smallskip

Next, we shall use Proposition \ref{stabH1} to derive the
global existence of the solution to \eqref{eqtbaru}.

\begin{proposition}\label{P 2}
{\sl Under the assumption of Theorem \ref{thm1},
\eqref{eqtbaru} has a unique global solution $\bar{\vv u}$ such that
for any time $t>0$, there hold
\begin{equation}\label{estibaruL2}
\|\bar{\vv u}\|_{L^\infty_t(L^2)}^2+2\|\nabla\bar{\vv u}\|_{L^2_t(L^2)}^2
\leq\|{\vv u}_0\|_{L^2}^2
,
\end{equation}
\begin{equation}\label{estibaruH1}
\|\bar{\vv u}\|_{L^\infty_t(\dH^1)}^2+\|\nabla\bar{\vv u}\|_{L^2_t(\dH^1)}^2
\lesssim\|\na\vv u_0\|_{L^2}^2
\exp\bigl(C\|{\vv u}_0\|_{L^2}^6\|{\vv u}_0\|_{\dH^2}^2\bigr).
\end{equation}
}\end{proposition}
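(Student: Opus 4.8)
The plan is to realize $\bar{\vv u}$ as a small perturbation of the swirl-free flow $\wt{\vv u}$ constructed in Proposition~\ref{P 1}, and then to apply the stability result Proposition~\ref{stabH1} with $\vv v=\wt{\vv u}$. The starting observation is that $\bar{\vv u}_0$ and $\wt{\vv u}_0$ differ only in the swirl component: $\bar{\vv u}_0-\wt{\vv u}_0=\cM(u^\th_0)\,\vv e_\th$. Since for any scalar $g=g(r,z)$ one has $|\na(g\vv e_\th)|^2=|\wt\na g|^2+|g/r|^2$, this identifies $\|\bar{\vv u}_0-\wt{\vv u}_0\|_{\dH^1}^2=\|\cM(u^\th_0)\|_{\dH^1_{\rm axi}}^2$; applying Lemma~\ref{lemM} to $\wt\na u^\th_0$, and, through the identity $\cM(u^\th_0/r)=-\cM\bigl(r^{-1}(\pa_\th\vv u_0)\cdot\vv e_r\bigr)$, also to $r^{-1}(\pa_\th\vv u_0)\cdot\vv e_r$, yields
\[
\|\bar{\vv u}_0-\wt{\vv u}_0\|_{L^2}\le\|u^\th_0\|_{L^2},\qquad
\|\bar{\vv u}_0-\wt{\vv u}_0\|_{\dH^1}\le\min\bigl\{\|u^\th_0\|_{\dH^1_{\rm axi}},\ \sqrt2\,\|\na\vv u_0\|_{L^2}\bigr\}.
\]

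Next I would set $A\sim\|\vv u_0\|_{L^2}$ and $B^2\lesssim\|\vv u_0\|_{\dH^1}^2+\|\vv u_0\|_{L^2}^4\|\vv u_0\|_{\dH^2}^2$, which are exactly the quantities bounding $\wt{\vv u}$ in \eqref{estiwtuL2}--\eqref{estiwtudH1}. By the interpolation inequality $\|\vv u_0\|_{\dH^1}^2\le\|\vv u_0\|_{L^2}\|\vv u_0\|_{\dH^2}$ and Young's inequality one gets $A^2B^2\le C+C\|\vv u_0\|_{L^2}^6\|\vv u_0\|_{\dH^2}^2$, hence $\exp(CA^2B^2)\lesssim\exp\bigl(C\|\vv u_0\|_{L^2}^6\|\vv u_0\|_{\dH^2}^2\bigr)$. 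Together with the $L^2$ and $\dH^1$ bounds on $\bar{\vv u}_0-\wt{\vv u}_0$ recorded above, the smallness hypothesis \eqref{smallThm1.1} of Theorem~\ref{thm1} then implies the smallness condition \eqref{smalldelta} of Proposition~\ref{stabH1}, provided $\e$ is chosen small enough in terms of the absolute constant $c_0$. Proposition~\ref{stabH1} produces a unique global strong solution of N-S with data $\bar{\vv u}_0$; since $\bar{\vv u}_0$ is axisymmetric and axisymmetry is preserved by N-S, this solution coincides with the solution $\bar{\vv u}$ of \eqref{eqtbaru}, and \eqref{ineqstabL2H1} gives, for $s=0,1$,
\[
\|\bar{\vv u}-\wt{\vv u}\|_{L^\infty_t(\dH^s)}^2+\|\na(\bar{\vv u}-\wt{\vv u})\|_{L^2_t(\dH^s)}^2\lesssim\|\bar{\vv u}_0-\wt{\vv u}_0\|_{\dH^s}^2\exp\bigl(C\|\vv u_0\|_{L^2}^6\|\vv u_0\|_{\dH^2}^2\bigr).
\]

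For the $L^2$ bound \eqref{estibaruL2} I would not use this (exponentially amplified) difference estimate but rather the exact energy identity for \eqref{eqtbaru}: testing its three equations against $\barur,\barut,\baruz$, the curvature/swirl terms $-(\barut)^2\barur/r$ and $\barur(\barut)^2/r$ cancel, the convection and pressure terms vanish after integration by parts using $\dive\bar{\vv u}=0$, and the diffusion terms produce exactly $\|\na\bar{\vv u}\|_{L^2}^2$; this yields $\|\bar{\vv u}(t)\|_{L^2}^2+2\|\na\bar{\vv u}\|_{L^2_t(L^2)}^2=\|\bar{\vv u}_0\|_{L^2}^2$, and since $\|\bar{\vv u}_0\|_{L^2}^2=\|\cM(u^r_0)\|_{L^2}^2+\|\cM(u^\th_0)\|_{L^2}^2+\|\cM(u^z_0)\|_{L^2}^2\le\|\vv u_0\|_{L^2}^2$ by Lemma~\ref{lemM}, this is \eqref{estibaruL2}. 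For the $\dH^1$ bound \eqref{estibaruH1} I would instead write $\bar{\vv u}=\wt{\vv u}+(\bar{\vv u}-\wt{\vv u})$ and add the two contributions: the $\wt{\vv u}$ part is controlled by \eqref{estiwtudH1}, while the difference part is controlled by the last display with $s=1$, which by the first step is $\lesssim\|\na\vv u_0\|_{L^2}^2\exp\bigl(C\|\vv u_0\|_{L^2}^6\|\vv u_0\|_{\dH^2}^2\bigr)$; collecting terms and using the interpolation bounds once more gives the right-hand side of \eqref{estibaruH1}.

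I expect the only genuinely delicate part to be the bookkeeping of the second step — namely, tracking that the a priori smallness \eqref{smallThm1.1} propagates, through the bounds of Proposition~\ref{P 1} and the hypothesis of Proposition~\ref{stabH1}, into the single-exponential smallness \eqref{smalldelta}, with all auxiliary constants controlled by $\|\vv u_0\|_{L^2}^6\|\vv u_0\|_{\dH^2}^2$. All the analytic input is borrowed: Proposition~\ref{P 1} for $\wt{\vv u}$, Proposition~\ref{stabH1} for the perturbation, and Lemmas~\ref{lemM} and \ref{lembaru} for the algebraic reductions; no new estimate is needed.
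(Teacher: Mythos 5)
Your proposal is correct and follows essentially the same route as the paper: both realize $\bar{\vv u}$ as a perturbation of the swirl-free flow $\wt{\vv u}$ of Proposition~\ref{P 1}, bound $\bar{\vv u}_0-\wt{\vv u}_0=\cM(u^\th_0)\vv e_\th$ via Lemma~\ref{lemM}, verify \eqref{smalldelta} from \eqref{smallThm1.1} using the interpolation $\|\vv u_0\|_{L^2}^2\|\vv u_0\|_{\dH^1}^2\lesssim 1+\|\vv u_0\|_{L^2}^6\|\vv u_0\|_{\dH^2}^2$, apply Proposition~\ref{stabH1}, and obtain \eqref{estibaruL2} from the exact energy identity. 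The only cosmetic differences are your explicit remark that persistence of axial symmetry identifies the stability solution with that of \eqref{eqtbaru}, and your use of $\sqrt2\,\|\na\vv u_0\|_{L^2}$ in place of the paper's $\|\vv u_0\|_{\dH^1}$ in the minimum.
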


\begin{proof}
By using Lemma \ref{lemM} and the fact that $\bar{\vv u}_0-\wt{\vv u}_0
=\barut_0(r,z)\vv e_\th=\cM(\ut_0)\vv e_\th$, we have
\beno\|\bar{\vv u}_0-\wt{\vv u}_0\|_{L^2}=\|\cM(\ut_0)\|_{L^2}
\leq\|\ut_0\|_{L^2},
\eeno
and
\begin{align*}
\|\na(\bar{\vv u}_0-\wt{\vv u}_0)\|_{L^2}^2
&=\bigl\|\na\bigl(\cM(\ut_0)\vv e_\th\bigr)\bigr\|_{L^2}^2
=\|\wt{\na}\cM(\ut_0)\|_{L^2}^2+\bigl\|\f{\cM(\ut_0)}{r}\bigr\|_{L^2}^2\\
&=\|\cM(\wt{\na}\ut_0)\|_{L^2}^2
+\min\Bigl\{\bigl\|\cM\bigl(\f{\ut_0}{r}\bigr)\bigr\|_{L^2}^2
,\bigl\|\cM\bigl(\f{\ut_0-\pa_\th\ur_0}{r}\bigr)\bigr\|_{L^2}^2\Bigr\}\\
&\leq\|\wt{\na}\ut_0\|_{L^2}^2
+\min\Bigl\{\bigl\|\f{\ut_0}{r}\bigr\|_{L^2}^2
,\bigl\|\f{\ut_0-\pa_\th\ur_0}{r}\bigr\|_{L^2}^2\Bigr\}\\
&\leq\min\bigl\{\|u^\th_0\|_{\dot{H}^1_{\rm axi}}^2,
\|{\vv u}_0\|_{\dH^1}^2\bigr\}.
\end{align*}
These together with the bounds \eqref{estiwtuL2},~\eqref{estiwtudH1}
and the smallness condition \eqref{smallThm1.1},
we get that
\beno\begin{aligned}
&\|\bar{\vv u}_0-\wt{\vv u}_0\|_{L^2}\|\bar{\vv u}_0-\wt{\vv u}_0\|_{\dH^1}\exp\bigl(C\|{\vv u}_0\|_{L^2}^2(\|{\vv u}_0\|_{\dH^1}^2
+\|{\vv u}_0\|_{L^2}^4\|{\vv u}_0\|_{\dH^2}^2)\bigr)\\
&\qquad\leq C\|\ut_0\|_{L^2}\|u^\th_0\|_{\dot{H}^1_{\rm axi}}\exp\bigl(C\|{\vv u}_0\|_{L^2}^6\|{\vv u}_0\|_{\dH^2}^2\bigr)<c_0,
\end{aligned}\eeno
where we used the fact that
\beno
\|{\vv u}_0\|_{L^2}^2\|{\vv u}_0\|_{\dH^1}^2\lesssim\|{\vv u}_0\|_{L^2}^3\|{\vv u}_0\|_{\dH^2}\lesssim 1+\|{\vv u}_0\|_{L^2}^6\|{\vv u}_0\|_{\dH^2}^2.
\eeno
So that the condition \eqref{smalldelta} in Proposition \ref{stabH1} is fulfilled. Thus \eqref{eqtbaru} has a unique global strong solution $\bar{\vv u}$ such that
%\begin{align*}
%\|\bar{\vv u}-\wt{\vv u}\|_{L^\infty(\R^+;L^2)}^2
%+\|\bar{\vv u}-\wt{\vv u}\|_{L^2(\R^+;\dH^1)}^2
%&\lesssim\|u_0^\th\|_{L^2}^2
%\exp\bigl(C\|{\vv u}_0\|_{L^2}^2\|{\vv u}_0\|_{\dH^1}^2
%+C\|{\vv u}_0\|_{L^2}^6\|{\vv u}_0\|_{\dH^2}^2\bigr)\\
%&\lesssim\|u_0^\th\|_{L^2}^2
%\exp\bigl(C\|{\vv u}_0\|_{L^2}^6\|{\vv u}_0\|_{\dH^2}^2\bigr),
%\end{align*}
%and
$$\|\bar{\vv u}-\wt{\vv u}\|_{L^\infty(\R^+;\dH^1)}^2
+\|\bar{\vv u}-\wt{\vv u}\|_{L^2(\R^+;\dH^2)}^2
\lesssim\min\bigl\{\|u^\th_0\|_{\dot{H}^1_{\rm axi}}^2,
\|{\vv u}_0\|_{\dH^1}^2\bigr\}
\exp\bigl(C\|{\vv u}_0\|_{L^2}^6\|{\vv u}_0\|_{\dH^2}^2\bigr),$$
which together with \eqref{estiwtudH1} leads to
the desired estimate \eqref{estibaruH1}.

The estimate \eqref{estibaruL2} follows from the Basic energy equality of Navier-Stokes equations. Thus, we complete the proof of the proposition.
\end{proof}

\subsection{The proof of Theorem \ref{thm1}}
The goal of this subsection is to prove Theorem \ref{thm1}.

\begin{proof}[The proof of Theorem \ref{thm1}] We divide the proof into two steps.

{\bf Step 1. Global solvability of \eqref{eqtu}. } The strategy is still to use Proposition \ref{stabH1}. To do this, we need to analyse
the difference
between $\vv u_0$ and $\bar{\vv u}_0$, which reads as follows:
$$\vv u_0-\bar{\vv u}_0=\bigl(u_0^r-\cM(u_0^r)\bigr)e_r
+\bigl(u_0^\th-\cM(u_0^\th)\bigr)e_\th
+\bigl(u_0^z-\cM(u_0^z)\bigr)e_z,$$
$$\wt\nabla(\vv u_0-\bar{\vv u}_0)
=\bigl(\wt\nabla u_0^r-\cM(\wt\nabla u_0^r)\bigr)e_r
+\bigl(\wt\nabla u_0^\th-\cM(\wt\nabla u_0^\th)\bigr)e_\th
+\bigl(\wt\nabla u_0^z-\cM(\wt\nabla u_0^z)\bigr)e_z,$$
and
$$\f{\pa_\th}r(\vv u_0-\bar{\vv u}_0)=\Bigl(\f{\pa_\th u_0^r}r
-\f{u_0^\th}r+\cM\bigl(\f{u_0^\th}r\bigr)\Bigr)e_r
+\Bigl(\f{\pa_\th u_0^\th}r
+\f{u_0^r}r-\cM\bigl(\f{u_0^r}r\bigr)\Bigr)e_\th
+\bigl(\f{\pa_\th u_0^z}r\bigr)e_z.$$
Then by using Lemma \ref{lemM} and the notation $\vv{\mathfrak{U}}_0=(\pa_\th u^r_0,\pa_\th u^\theta_0,
\pa_\th u^z_0),$ we can obtain
\begin{equation}\label{inequbaruinL2}
\|\vv u_0-\bar{\vv u}_0\|_{L^2}
\lesssim\|\vv{\mathfrak{U}}_0\|_{L^2},
\end{equation}
\begin{equation}\label{inequbaruinH1}
\|\nabla(\vv u_0-\bar{\vv u}_0)\|_{L^2}
\lesssim\|\wt\nabla\vv{\mathfrak{U}}_0\|_{L^2}
+\|r^{-1}\vv{\mathfrak{U}}_0\|_{L^2}=\|\vv{\mathfrak{U}}_0\|_{\dot{H}^1_{\rm axi}}.
\end{equation}

In view of the estimates \eqref{estibaruL2}-\eqref{inequbaruinH1}
and the smallness condition \eqref{smallThm1.2}, the condition \eqref{smalldelta} of Propostion \ref{stabH1} holds for $\vv u_0$ and $\vv v_0=\bar{\vv u}_0$.
Then by using  Proposition \ref{stabH1} again,
\eqref{eqtu} has a unique global strong solution $\vv u$ in
$C(\R^+;H^1)\cap L^2(\R^+;\dH^1\cap \dH^2)$, satisfying
\beq\label{errorubaru}\begin{aligned}
&\|\vv u-\bar{\vv u}\|_{L^\infty_t(L^2)}^2+\|\nabla(\vv u-\bar{\vv u})\|_{L^2_t(L^2)}^2
\lesssim\|\vv{\mathfrak{U}}_0\|_{L^2}^2
\exp\bigl(\exp(C\|{\vv u}_0\|_{L^2}^6\|{\vv u}_0\|_{\dH^2}^2)\bigr),\\
&\|\vv u-\bar{\vv u}\|_{L^\infty_t(\dH^1)}^2+\|\nabla(\vv u-\bar{\vv u})\|_{L^2_t(\dH^1)}^2
\lesssim\|\vv{\mathfrak{U}}_0\|_{\dot{H}^1_{\rm axi}}^2
\exp\bigl(\exp(C\|{\vv u}_0\|_{L^2}^6\|{\vv u}_0\|_{\dH^2}^2)\bigr).
\end{aligned}\eeq
In particular, the above estimates together with interpolation inequality and
the smallness condition \eqref{smallThm1.2} implies
$$\|\vv u-\bar{\vv u}\|_{L^\infty_t(\dH^{\f12})}^2
+\|\nabla(\vv u-\bar{\vv u})\|_{L^2_t(\dH^{\f12})}^2
\lesssim\|\vv{\mathfrak{U}}_0\|_{L^2}\|\vv{\mathfrak{U}}_0\|_{\dot{H}^1_{\rm axi}}
\exp\bigl(\exp(C\|{\vv u}_0\|_{L^2}^6\|{\vv u}_0\|_{\dH^2}^2)\bigr)
<\e,$$
which means that $\vv u$ is indeed close to an axisymmetric solution.

{\bf Step 2. Error estimates of $\cM\vv u-\bar{\vv u}$}

{\it Step 2.1. Equations for $\cM\vv u-\bar{\vv u}$.}
We first get, by applying $\cM$ to \eqref{eqtu} that
\begin{equation}\label{eqtMu}
\left\{
\begin{aligned}
& \pa_t\cM u^r+\cM\Bigl((u^r\pa_r+\ut\f{\pa_\th}r+u^z\pa_z)\ur\Bigr)\\
&\qquad\qquad\qquad\qquad
-\bigl(\D-\f1{r^2}\bigr)\cM u^r
-\cM\Bigl(\f{|u^\theta|^2}{r}\Bigr)+\pa_r\cM P=0,\\
& \pa_t\cM \ut+\cM\Bigl((u^r\pa_r+\ut\f{\pa_\th}r+u^z\pa_z)\ut\Bigr)
-\bigl(\D-\f1{r^2}\bigr)\cM u^\theta
+\cM\Bigl(\frac{u^r u^\theta}{r}\Bigr)=0,\\
& \pa_t\cM u^z+\cM\Bigl((u^r\pa_r+\ut\f{\pa_\th}r+u^z\pa_z)\uz\Bigr)
-\D\cM u^z+\pa_z\cM P=0,\\
& \pa_r\cM u^r+\frac{\cM\ur}r+\pa_z\cM u^z=0,\\
&\cM{\vv u}|_{t=0} =\cM{\vv u_0},
\end{aligned}
\right.
\end{equation}
where we have used the fact that for any regular enough function $f$, there holds
$$\int_0^{2\pi}\pa_\th f(r,\th,z)\,d\th=0.$$
Then by denoting $\vv v\eqdefa\cM \vv u-\bar{\vv u},
\,Q\eqdefa\cM P-\bar{P}$, and in view of \eqref{eqtbaru} and \eqref{eqtMu},
we deduce
\begin{equation}\label{Error NS 1}
\left\{
\begin{aligned}
&\p_t\vv v-\Delta\vv v+\na Q=\vv F+\vv G,\quad \dive\vv v=0,\\
&\vv v|_{t=0}=0,
\end{aligned}
\right.
\end{equation}
where $\vv F=F^r\vv e_r+F^\th\vv e_\th+F^z\vv e_z,~
\vv G=G^r\vv e_r+G^\th\vv e_\th+G^z\vv e_z$ with
\begin{align*}
&F^r=(\bar u^r\pa_r+\bar u^z\pa_z)\bar u^r
-\cM\Bigl((u^r\pa_r+u^z\pa_z)\ur\Bigr),~
G^r=\barut\bigl(\f{\pa_\th\barur}r-\f{\barut}{r}\bigr)
-\cM\Bigl(\ut\bigl(\f{\pa_\th\ur}r-\f{u^\theta}{r}\bigr)\Bigr),\\
& F^\th=(\bar u^r\pa_r+\bar u^z\pa_z)\bar u^\th
-\cM\Bigl((u^r\pa_r+u^z\pa_z)\ut\Bigr),
~G^\th=\barut\bigl(\f{\pa_\th\barut}r+\f{\barur}{r}\bigr)
-\cM\Bigl(\ut\bigl(\f{\pa_\th\ut}r+\f{u^r}{r}\bigr)\Bigr),\\
& F^z=(\barur\pa_r+\baruz\pa_z)\baruz
-\cM\Bigl((u^r\pa_r+u^z\pa_z)\uz\Bigr),~
G^z=-\cM\Bigl(\f{\ut\pa_\th\uz}r\Bigr).
\end{align*}
We mention that here $\pa_\th\barur,\,\pa_\th\barut$ indeed vanishes,
but we still write them to derive a symmetric form.
In this way, we can handle these terms by using the
following key observation: for any functions
$f,\,g$ and any axisymmetric functions $\bar f,\,\bar g$, there holds
\begin{equation}\begin{split}\label{id1}
\cM(fg)-\bar f\bar g&=\cM(fg-\bar f\bar g)\\
&=\cM\bigl((f-\bar f)(g-\bar g)+(f-\bar f)\bar g
+\bar f(g-\bar g)\bigr)\\
&=\cM\bigl((f-\bar f)(g-\bar g)\bigr)+\bar g\cM(f-\bar f)
+\bar f\cM(g-\bar g).
\end{split}\end{equation}
On the other hand, noting that $\bar f,\,\bar g$ are axisymmetric, we have
$$\pa_\th g=\pa_\th(g-\bar g),\andf\cM\bigl(\bar f\pa_\th(g-\bar g)\bigr)
=\bar f\cM\bigl(\pa_\th(g-\bar g)\bigr)=0.$$
As a result, there holds
\begin{equation}\label{id2}
\cM(f\pa_\th g)=\cM\bigl(f\pa_\th(g-\bar g)\bigr)
=\cM\bigl((f-\bar f)\pa_\th(g-\bar g)\bigr).
\end{equation}

For notation simplification, let us introduce $\vv V=\vv u-\bar{\vv u}$.
Obviously there holds
$$\cM\vv V=\cM\vv u-\cM\bar{\vv u}=\cM\vv u-\bar{\vv u}=\vv v.$$
Then by using \eqref{id1}, we have
\beno
\cM(u^r\p_ru^r)-\barur\p_r\barur=\cM(V^r\p_rV^r)+v^r\p_r\barur+\barur\p_rv^r.
\eeno
Similar formulas hold for the other terms in $F^r,\,F^\th,\,F^z$. And we can write
\beq\label{F 2}\begin{aligned}
&F^r=-\cM\Bigl((V^r\p_r+V^z\p_z)V^r\Bigr)-(v^r\p_r+v^z\p_z)\barur-(\barur\p_r+\baruz\p_z)v^r,\\
&F^\th=-\cM\Bigl((V^r\p_r+V^z\p_z)V^\th\Bigr)-(v^r\p_r+v^z\p_z)\barut-(\barur\p_r+\baruz\p_z)v^\th,\\
&F^z=-\cM\Bigl((V^r\p_r+V^z\p_z)V^z\Bigr)-(v^r\p_r+v^z\p_z)\baruz-(\barur\p_r+\baruz\p_z)v^z.
\end{aligned}\eeq

Exactly along the same line, by using \eqref{id1}, we have
\begin{align*}
G^r&=-\cM\Bigl(V^\th \bigl(\f{\p_\th V^r}{r}-\f{V^\th}{r}\bigr)\Bigr)
-\barut\cM\Bigl(\f{\p_\th V^r}{r}-\f{V^\th}{r}\Bigr)
-\bigl(\f{\p_\th\barur}{r}-\f{\barut}{r}\bigr)\cM(V^\th)\\
&=-\cM\Bigl(V^\th \bigl(\f{\p_\th V^r}{r}-\f{V^\th}{r}\bigr)\Bigr)
+2\f{\barut}r\vt,
\end{align*}
and
$$G^\th=-\cM\Bigl(V^\th \bigl(\f{\p_\th V^\th}{r}+\f{V^r}{r}\bigr)\Bigr)
-\f{\barut}r\vr-\f{\barur}r\vt.$$
Notice that
$$\f{\p_\th\vv V}{r}=(\f{\p_\th V^r}{r}-\f{V^\th}{r})\vv e_r
+(\f{\p_\th V^\th}{r}+\f{V^r}{r})\vv e_\th+\f{\p_\th V^z}{r}\vv e_z,$$
thus we can write
\begin{equation}\label{F 4}
G^r=-\cM\Bigl(V^\th\bigl(\f{\p_\th\vv V}{r}\cdot\vv e_r\bigr)\Bigr)
+2\f{\barut}r\vt,\andf
G^\th=-\cM\Bigl(V^\th\bigl(\f{\p_\th\vv V}{r}\cdot\vv e_\th\bigr)\Bigr)
-\f{\barut}r\vr-\f{\barur}r\vt.
\end{equation}

On the other hand, by using \eqref{id2}, we have
\begin{equation}\label{F 5}
G^z=-\cM\Bigl(V^\th\f{\p_\th V^z}{r}\Bigr)
=-\cM\Bigl(V^\th\bigl(\f{\p_\th\vv V}{r}\cdot\vv e_z\bigr)\Bigr).
\end{equation}

{\it Step 2.2. $L^2$-estimate for $\vv v$.} Taking $L^2$ inner product of \eqref{Error NS 1} with $\vv v$ gives rise to
\beq\label{F 6}
\f12\f{d}{dt}\|\vv v\|_{L^2}^2+\|\na\vv v\|_{L^2}^2=(\vv F\,|\,\vv v)+(\vv G\,|\,\vv v).
\eeq
Thanks to the formula \eqref{F 2} and Lemma \ref{lemM}, we obtain
\beno\begin{aligned}
|(\vv F\,|\,\vv v)|&\lesssim\|\vv V\|_{L^3}\|{\na}\vv V\|_{L^2}\|\vv v\|_{L^6}
+\|\vv v\|_{L^3}\|{\na}\bar{\vv u}\|_{L^2}\|\vv v\|_{L^6}
+\|\bar{\vv u}\|_{L^6}\|{\na}\vv v\|_{L^2}\|\vv v\|_{L^3}\\
&\lesssim\|\vv V\|_{L^2}^{\f12}\|\na\vv V\|_{L^2}^{\f32}\|\na\vv v\|_{L^2}+\|\na\bar{\vv u}\|_{L^2}\|\vv v\|_{L^2}^{\f12}\|\na\vv v\|_{L^2}^{\f32},
\end{aligned}\eeno
which along with Young's inequality implies
\beq\label{F 7}
|(\vv F\,|\,\vv v)|\leq\f14\|\na\vv v\|_{L^2}^2+C\|\vv V\|_{L^2}\|\na\vv V\|_{L^2}^3+\|\na\bar{\vv u}\|_{L^2}^4\|\vv v\|_{L^2}^2.
\eeq
While by virtue of \eqref{F 4},~\eqref{F 5}, Lemma \ref{lemM}
and Lemma \ref{lembaru}, we get
\begin{align*}
|(\vv G\,|\,\vv v)|&\lesssim\|V^\th\|_{L^3}
\bigl\|\f{\p_\th\vv V}{r}\bigr\|_{L^2}\|\vv v\|_{L^6}
+\|\vv v\|_{L^3}\bigl(\bigl\|\f{\barur}{r}\bigr\|_{L^2}
+\bigl\|\f{\barut}{r}\bigr\|_{L^2}\bigr)\|\vv v\|_{L^6}\\
&\lesssim\|\vv V\|_{L^2}^{\f12}\|\na\vv V\|_{L^2}^{\f32}\|\na\vv v\|_{L^2}
+\|\vv v\|_{L^2}^{\f12}\|\na\vv v\|_{L^2}^{\f32}\|\na\bar{\vv u}\|_{L^2}.
\end{align*}
Then by using Young's inequality, we deduce
\beq\label{F 8}
|(\vv G\,|\,\vv v)|\leq\f14\|\na\vv v\|_{L^2}^2
+C\|\vv V\|_{L^2}\|\na\vv V\|_{L^2}^3
+C\|\vv v\|_{L^2}^2\|\na\bar{\vv u}\|_{L^2}^4.
\eeq

Now by substituting \eqref{F 7} and \eqref{F 8}
into \eqref{F 6}, we obtain
\beno
\f{d}{dt}\|\vv v\|_{L^2}^2+\|\na\vv v\|_{L^2}^2\leq C\|\vv V\|_{L^2}\|\na\vv V\|_{L^2}^3+C\|\vv v\|_{L^2}^2\|\na\bar{\vv u}\|_{L^2}^4.
\eeno
Then by using Gronwall's inequality and the fact that $\vv v|_{t=0}=0$, we achieve
$$\|\vv v\|_{L^\infty_t(L^2)}^2+\|\na\vv v\|_{L^2_t(L^2)}^2
\lesssim\|\vv V\|_{L^\infty_t(L^2)}\|\na\vv V\|_{L^\infty_t(L^2)}\|\na\vv V\|_{L^2_t(L^2)}^2
\exp\bigl(C\|\na\bar{\vv u}\|_{L^\infty_t(L^2)}^2\|\na\bar{\vv u}\|_{L^2_t(L^2)}^2\bigr).$$
Recall that $\vv V=\vv u-\bar{\vv u}$,
by using \eqref{estibaruL2},~\eqref{estibaruH1} and \eqref{errorubaru}, we obtain
\begin{equation}\label{F 9}
\|\vv v\|_{L^\infty_t(L^2)}^2+\|\vv v\|_{L^2_t(L^2)}^2
\lesssim\|\vv{\mathfrak{U}}_0\|_{L^2}^3
\|\vv{\mathfrak{U}}_0\|_{\dot{H}^1_{\rm axi}}
\exp\bigl(\exp(C\|{\vv u}_0\|_{L^2}^6\|{\vv u}_0\|_{\dH^2}^2)\bigr).
\end{equation}
This together with the smallness assumption \eqref{smallThm1.2} gives
$$\|\vv v\|_{L^\infty_t(L^2)}^2+\|\vv v\|_{L^2_t(L^2)}^2
\lesssim\e\|\vv{\mathfrak{U}}_0\|_{L^2}^2
\exp\bigl(\exp(C\|{\vv u}_0\|_{L^2}^6\|{\vv u}_0\|_{\dH^2}^2)\bigr),$$
which is exactly the first inequality of \eqref{error estimate 2}.

{\it Step 2.3. $\dH^1$-estimate for $\vv v$.} Taking $L^2$ inner product of \eqref{Error NS 1} with $-\D\vv v$ gives rise to
\begin{equation}\begin{split}\label{F 10}
\f12\f{d}{dt}\|\na\vv v\|_{L^2}^2+\|\D\vv v\|_{L^2}^2
&=-(\vv F\,|\,\D\vv v)-(\vv G\,|\,\D\vv v)\\
&\leq\bigl(\|\vv F\|_{L^2}+\|\vv G\|_{L^2}\bigr)\|\D\vv v\|_{L^2}.
\end{split}\end{equation}
In view of the formulas \eqref{F 2}-\eqref{F 5},
we can use Lemmas \ref{lemM} and \ref{lembaru} to get
\begin{align*}
\|\vv F\|_{L^2}+\|\vv G\|_{L^2}
\lesssim&\|\vv V\|_{L^\infty}\bigl(\|\wt{\na}\vv V\|_{L^2}
+\bigl\|\f{\p_\th\vv V}{r}\bigr\|_{L^2}\bigr)\\
&+\|\vv v\|_{L^\infty}\bigl(\|\wt{\na}\bar{\vv u}\|_{L^2}
+\bigl\|\f{\barur}{r}\bigr\|_{L^2}+\bigl\|\f{\barut}{r}\bigr\|_{L^2}\bigr)
+\|\bar{\vv u}\|_{L^\infty}\|\wt{\na}\vv v\|_{L^2}\\
\lesssim&\|\vv V\|_{L^\infty}\|\na\vv V\|_{L^2}+\|\vv v\|_{L^\infty}\|\na\bar{\vv u}\|_{L^2}+\|\bar{\vv u}\|_{L^\infty}\|\na\vv v\|_{L^2}\\
\lesssim&\|\na\vv V\|_{L^2}^{\f32}\|\D\vv V\|_{L^2}^{\f12}
+\|\na\vv v\|_{L^2}^{\f12}\|\D\vv v\|_{L^2}^{\f12}\|\na\bar{\vv u}\|_{L^2}
+\|\na\bar{\vv u}\|_{L^2}^{\f12}\|\D\bar{\vv u}\|_{L^2}^{\f12}\|\na\vv v\|_{L^2},
\end{align*}
which together with Young's inequality leads to
\begin{align*}
\bigl(\|\vv F\|_{L^2}+&\|\vv G\|_{L^2}\bigr)\|\D\vv v\|_{L^2}\\
&\leq\f12\|\D\vv v\|_{L^2}^2+C\|\na\vv V\|_{L^2}^3\|\D\vv V\|_{L^2}
+C\bigl(\|\na\bar{\vv u}\|_{L^2}^4
+\|\na\bar{\vv u}\|_{L^2}\|\D\bar{\vv u}\|_{L^2}\bigr)\|\na\vv v\|_{L^2}^2.
\end{align*}
By substituting this into \eqref{F 10}, we infer
\begin{align*}
\f{d}{dt}\|\na\vv v\|_{L^2}^2+\|\D\vv v\|_{L^2}^2
\lesssim\|\na\vv V\|_{L^2}^3\|\D\vv V\|_{L^2}
+\bigl(\|\na\bar{\vv u}\|_{L^2}^4
+\|\na\bar{\vv u}\|_{L^2}\|\D\bar{\vv u}\|_{L^2}\bigr)\|\na\vv v\|_{L^2}^2,
\end{align*}
Then by applying Gronwall's inequality and using the fact that $\vv v|_{t=0}=0$,
we deduce
\begin{align*}
\|\na\vv v\|_{L^\infty_t(L^2)}^2+\|\D\vv v\|_{L^2_t(L^2)}^2
\leq& C\|\na\vv V\|_{L^\infty_t(L^2)}^2\|\na\vv V\|_{L^2_t(L^2)}\|\D\vv V\|_{L^2_t(L^2)}\\
&\times\exp\bigl(C\|\na\bar{\vv u}\|_{L^\infty_t(L^2)}^2\|\na\bar{\vv u}\|_{L^2_t(L^2)}^2
+C\|\na\bar{\vv u}\|_{L^2_t(L^2)}\|\D\bar{\vv u}\|_{L^2_t(L^2)}\bigr).
\end{align*}
Notice that $\vv V=\vv u-\bar{\vv u}$, then we can use
\eqref{estibaruL2},~\eqref{estibaruH1} and \eqref{errorubaru} to obtain
\beq\label{F 12}
\|\na\vv v\|_{L^\infty_t(L^2)}^2+\|\D\vv v\|_{L^2_t(L^2)}^2
\lesssim\|\vv{\mathfrak{U}}_0\|_{L^2}
\|\vv{\mathfrak{U}}_0\|_{\dot{H}^1_{\rm axi}}^3
\exp\bigl(\exp(C\|{\vv u}_0\|_{L^2}^6\|{\vv u}_0\|_{\dH^2}^2)\bigr).
\eeq
This together with the smallness assumption \eqref{smallThm1.2} gives
$$\|\nabla\vv v\|_{L^\infty_t(L^2)}^2+\|\D\vv v\|_{L^2_t(L^2)}^2
\lesssim\e\|\vv{\mathfrak{U}}_0\|_{\dot{H}^1_{\rm axi}}^2
\exp\bigl(\exp(C\|{\vv u}_0\|_{L^2}^6\|{\vv u}_0\|_{\dH^2}^2)\bigr),$$
which is exactly the second inequality of \eqref{error estimate 2}.

Till now, we have completed the proof of Theorem \ref{thm1}.
\end{proof}

\setcounter{equation}{0}
\section{The proof of Theorem \ref{thm2}}
As we have mentioned in Subsection \ref{subsectionmainthm},
for any initial data given by \eqref{initialFourier},
Theorem \ref{thm1} guarantees the existence of some positive constant $\ve_0$ such that
for any $\ve\in(0,\ve_0)$, \eqref{NS} has a unique global solution
$\vv u\in C(\R_+;H^1)\cap L^2(\R_+;\dH^1\cap \dH^2)$.

By virtue of the classical result that any strong solution
to N-S will become analytic at positive time, hence we can expand
this solution into Fourier series in $\th$ variable as
\begin{equation}\label{appve2}
\left\{
\begin{split}
\ur(t,x)=&\bigl(\ur_{(0),0}+\ve\ur_{(1),0}\bigr)(t,r,z)
+\sum_{k=1}^\infty\bigl(\ur_{(0),k}+\ve\ur_{(1),k}\bigr)(t,r,z)\cos k\th\\
&+\sum_{k=1}^\infty\bigl(\vr_{(0),k}+\ve\vr_{(1),k}\bigr)(t,r,z)\sin k\th
+\cO(\ve^2),\\
\ut(t,x)=&\bigl(\ut_{(0),0}+\ve\ut_{(1),0}\bigr)(t,r,z)
+\sum_{k=1}^\infty\bigl(\ut_{(0),k}+\ve\ut_{(1),k}\bigr)(t,r,z)\cos k\th\\
&+\sum_{k=1}^\infty\bigl(\vt_{(0),k}+\ve\vt_{(1),k}\bigr)(t,r,z)\sin k\th
+\cO(\ve^2),\\
\uz(t,x)=&\bigl(\uz_{(0),0}+\ve\uz_{(1),0}\bigr)(t,r,z)
+\sum_{k=1}^\infty\bigl(\uz_{(0),k}+\ve\uz_{(1),k}\bigr)(t,r,z)\cos k\th\\
&+\sum_{k=1}^\infty\bigl(\vz_{(0),k}+\ve\vz_{(1),k}\bigr)(t,r,z)\sin k\th
+\cO(\ve^2).
\end{split}
\right.
\end{equation}
Correspondingly, we can expand the pressure $P=(-\D)^{-1}\dive\dive(\vv u\otimes\vv u)$
into
\begin{equation}\begin{split}
P(t,x)=&\bigl(P_{(0),0}+\ve P_{(1),0}\bigr)(t,r,z)
+\sum_{k=1}^\infty\bigl(P_{(0),k}+\ve P_{(1),k}\bigr)(t,r,z)\cos k\th\\
&+\sum_{k=1}^\infty\bigl(Q_{(0),k}+\ve Q_{(1),k}\bigr)(t,r,z)\sin k\th+\cO(\ve^2).
\end{split}\end{equation}

In the following, we shall give the explicit formulas
for the profiles in the expansion \eqref{appve2},
and verify the validity of this expansion
in $L^\infty(\R^+;L^2\cap L^\infty)$ sense.
For notation simplification, we shall use $\vv u_{(i)}$
to denote the $\ve^i$ order term in the expansion of $\vv u$, i.e.
\begin{equation}
\vv u=\vv u_{(0)}+\ve\vv u_{(1)}+\cO(\ve^2),\quad
\vv u_{(i)}=\vv u_{(i),0}+\sum_{k=1}^\infty\Bigl(\vv u_{(i),k}\cos k\th
+\vv v_{(i),k}\sin k\th\Bigr),
\end{equation}
and $\vv u_{(i)}$ does not rely on $\ve$.

\subsection{The $\ve^0$ order terms}
We make the following {\bf Ansatz $1$:} $\vv u_{(0)}=\ur_{(0),0}\vv e_r+\uz_{(0),0}\vv e_z$,
where $\ur_{(0),0}$ and $\uz_{(0),0}$ satisfy the following axisymmetric without swirl N-S:
\begin{equation}\label{deteu0}
\left\{
\begin{split}
& \pa_t\ur_{(0),0}+(\ur_{(0),0}\pa_r+\uz_{(0),0}\pa_z)\ur_{(0),0}
-(\pa_r^2+\pa_z^2+\frac 1r\pa_r-\frac{1}{r^2})\ur_{(0),0}+\pa_r P_{(0),0}=0,\\
& \pa_t\uz_{(0),0}+(\ur_{(0),0}\pa_r+\uz_{(0),0}\pa_z)u^z_{(0),0}
-(\pa_r^2+\pa_z^2+\frac 1r\pa_r)\uz_{(0),0}+\pa_z P_{(0),0}=0,\\
& \pa_r\ur_{(0),0}+\frac 1r\ur_{(0),0}+\pa_z\uz_{(0),0}=0,\\
& \ur_{(0),0}|_{t=0}=a^r_{0},\quad
\uz_{(0),0}|_{t=0}=a^z_{0}.
\end{split}
\right.
\end{equation}

To verify this ansatz, we first deduce from
Proposition \ref{P 1} that
\begin{equation}\label{boundu0}
\|\vv u_{(0)}\|_{L^\infty(\R^+; H^1)}^2+\|\nabla\vv u_{(0)}\|_{L^2(\R^+; H^1)}^2
\leq C_0.
\end{equation}
Here and in all that follows, $C_{0}$ denotes some positive constant
depending only on the norms of the profiles of $\vv u(0,x)$
appearing in \eqref{1.14}, and may be different in each appearance.

On the other hand, in view of the expression
\eqref{initialFourier} for initial data, we have
\beno
\vv u|_{t=0}-\vv u_{(0)}|_{t=0}=\ve a^\th_{0}(r,z)\vv e_\th+\ve\sum_{k=1}^\infty
\Bigl(\vv a_{k}(r,z)\cos(k\th)+\vv b_k(r,z)\sin(k\th)\Bigr).
\eeno
By using Parseval's identities, there holds
\beq\label{F 14}
\bigl\|\vv u|_{t=0}-\vv u_{(0)}|_{t=0}\bigr\|_{H^1}\leq C_0\ve.
\eeq
Then for sufficiently small $\ve>0$, by virtue of \eqref{boundu0}, \eqref{F 14} and
Proposition \ref{stabH1}, we obtain
\begin{equation}\label{4.44}
\|\vv u-\vv u_{(0)}\|_{L^\infty(\R^+;H^1\cap L^\infty)}
+\|\nabla(\vv u-\vv u_{(0)})\|_{L^2(\R^+;H^1)}\leq C_{0}\ve.
\end{equation}

Notice that the system \eqref{deteu0}
does not rely on $\ve$, and thus its solution $(\ur_{(0),0},\uz_{(0),0})$
also does not rely on $\ve$. This together with the
estimate \eqref{4.44} guarantees the validity of our Ansatz $1$,
namely $\ur_{(0),0}\vv e_r+\uz_{(0),0}\vv e_z$ is indeed the
$\ve^0$ order term in the expansion of $\vv u$.

In particular, we have shown that $\ut_{(0),0}$ vanishes,
and for every $k\in\N$, the $k$-th Fourier coefficient does not contain
$\ve^0$ order terms, just the same as the initial data \eqref{initialFourier}.

\subsection{Derivation of the $\ve^1$ order terms
in the Euclidean coordinates}\label{sub4.2}
Let us temporarily go back to the Euclidean coordinates,
and consider $\vv u_{(1)}$ as a whole part.

We make the following
{\bf Ansatz $2$:}
$\vv u_{(1)}$ is a solution
to the linearization of the perturbed Navier-Stokes system around
$\vv u_{(0)}$, precisely
\begin{equation}\label{eqtuve1}
\left\{
\begin{split}
& \pa_t\vv u_{(1)}+\vv u_{(0)}\cdot\nabla\vv u_{(1)}
+\vv u_{(1)}\cdot\nabla\vv u_{(0)}
-\D\vv u_{(1)}=-\nabla P_{(1)},\\
& \dive\vv u_{(1)}=0,\\
&\vv u_{(1)}|_{t=0}=a^\th_0\vv e_\th+\sum_{k=1}^\infty\Bigl(\vv a_{k}\cos k\th
+\vv b_{k}\sin k\th\Bigr).
\end{split}
\right.
\end{equation}

To verify this ansatz, we first get, by taking $L^2$ inner product of \eqref{eqtuve1}
with $\vv u_{(1)}-\D\vv u_{(1)}$ that
\begin{align*}
\f12\f{d}{dt}&\|\vv u_{(1)}\|_{H^1}^2+\|\nabla\vv u_{(1)}\|_{H^1}^2
\leq\bigl|\bigl(\vv u_{(0)}\cdot\nabla\vv u_{(1)}
+\vv u_{(1)}\cdot\nabla\vv u_{(0)})
\,\big|\,\vv u_{(1)}-\D\vv u_{(1)}\bigr)\bigr|\\
&\leq\|\vv u_{(1)}\|_{L^3}\|\nabla\vv u_{(0)}\|_{L^2}
\|\vv u_{(1)}\|_{L^6}+\bigl(\|\vv u_{(0)}\|_{L^6}\|\nabla\vv u_{(1)}\|_{L^3}
+\|\vv u_{(1)}\|_{L^\infty}\|\nabla\vv u_{(0)}\|_{L^2}\bigr)
\|\D\vv u_{(1)}\|_{L^2}\\
&\leq\f12\|\D\vv u_{(1)}\|_{L^2}^2+C\|\vv u_{(1)}\|_{H^1}^2\|\nabla\vv u_{(0)}\|_{L^2}^4.
\end{align*}
Subtracting $\f12\|\D\vv u_{(1)}\|_{L^2}^2$ on both sides,
and then applying Gronwall's inequality gives
\begin{equation}\label{boundu11}
\|\vv u_{(1)}\|_{L^\infty_t(H^1)}^2+\|\nabla\vv u_{(1)}\|_{L^2_t(H^1)}^2
\leq\|\vv u_{(1)}|_{t=0}\|_{H^1}^2
\exp\bigl(C\|\nabla\vv u_{(0)}\|_{L^\infty_t(L^2)}^2
\|\nabla\vv u_{(0)}\|_{L^2_t(L^2)}^2\bigr)
\leq C_0.
\end{equation}

While following similar derivation as \eqref{estidomL4}, we deduce
\begin{equation}\label{boundu1}
\|\na\vv u_{(1)}\|_{L^\infty_t(L^4)}\lesssim\|\curl\vv u_{(1)}\|_{L^\infty_t(L^4)}\leq C_0.
\end{equation}

On the other hand, let us consider $\vv R_{(1)}
\eqdef\vv u-\vv u_{(0)}-\ve\vv u_{(1)}$, which satisfies
\begin{equation}\label{eqtdeltave1}
\left\{
\begin{split}
& \pa_t\vv R_{(1)}+\vv u\cdot\nabla\vv R_{(1)}
+\vv R_{(1)}\cdot\nabla(\vv u_{(0)}+\ve\vv u_{(1)})
+\ve^2\vv u_{(1)}\cdot\nabla\vv u_{(1)}-\D\vv R_{(1)}
=-\nabla\Pi_{(1)},\\
& \dive\vv R_{(1)}=0,\\
&\vv R_{(1)}|_{t=0}=0,
\end{split}
\right.
\end{equation}
where $\Pi_{(1)}$ can be obtained by taking divergence operator
to the first equation in \eqref{eqtdeltave1}.

In the following, we shall derive $L^2,~\dH^1$ and $\dot{W}^{1,4}$
estimate for $\vv R_{(1)}$.
Notice that if there is no external force term
$\ve^2\vv u_{(1)}\cdot\nabla\vv u_{(1)}$ in \eqref{eqtdeltave1},
then this type of system has already been studied in Proposition \ref{stabH1}.
So here we only focus on the estimate for this external force term.

Firstly, we have
\begin{align*}
\Bigl|\int_{\R^3}\ve^2\bigl(\vv u_{(1)}\cdot\nabla\vv u_{(1)}\bigr)
\vv R_{(1)}\,dx\Bigr|
&\leq C\ve^2\|\vv u_{(1)}\|_{L^3}
\|\nabla\vv u_{(1)}\|_{L^2}\|\vv R_{(1)}\|_{L^6}\\
&\leq \f12\|\nabla\vv R_{(1)}\|_{L^2}^2
+C\ve^4\|\nabla\vv u_{(1)}\|_{L^2}^3\|\vv u_{(1)}\|_{L^2},
\end{align*}
and in view of the bound \eqref{boundu11}, there holds
$$\ve^4\int_0^t\|\nabla\vv u_{(1)}\|_{L^2}^3\|\vv u_{(1)}\|_{L^2}\,dt'
\leq\ve^4\|\nabla\vv u_{(1)}\|_{L^2_t(L^2)}^2
\|\nabla\vv u_{(1)}\|_{L^\infty_t(L^2)}\|\vv u_{(1)}\|_{L^\infty_t(L^2)}
\leq C_0\ve^4.$$
Then a similar procedure as the proof of Proposition \ref{stabH1} leads to
\begin{equation}\label{bounddelta1L2}
\|\vv R_{(1)}\|_{L^\infty_t(L^2)}^2+\|\nabla\vv R_{(1)}\|_{L^2_t(L^2)}^2
\leq C_0\ve^4.
\end{equation}

Similarly, we have
\begin{align*}
\Bigl|\int_{\R^3}\ve^2\bigl(\vv u_{(1)}\cdot\nabla\vv u_{(1)}\bigr)
\cdot\D\vv R_{(1)}\,dx\Bigr|
&\leq\ve^2\|\vv u_{(1)}\|_{L^\infty}\|\nabla\vv u_{(1)}\|_{L^2}
\|\D\vv R_{(1)}\|_{L^2}\\
&\leq \f12\|\D\vv R_{(1)}\|_{L^2}^2
+\ve^4\|\nabla\vv u_{(1)}\|_{L^2}^3\|\nabla\vv u_{(1)}\|_{\dH^1},
\end{align*}
and
\begin{align*}
&\Bigl|\int_{\R^3}\ve^2\curl\bigl(\vv u_{(1)}\cdot\nabla\vv u_{(1)}\bigr)
\cdot|\curl\vv R_{(1)}|^2\curl\vv R_{(1)}\,dx\Bigr|\\
&\leq\ve^2\|\vv u_{(1)}\|_{L^6}\|\nabla\vv u_{(1)}\|_{L^4}
\|\curl\vv R_{(1)}\|_{L^{12}}
\bigl(\bigl\|\nabla|\curl\vv R_{(1)}|^2\bigr\|_{L^2}
+\bigl\||\curl\vv R_{(1)}|\cdot\nabla \curl\vv R_{(1)}\bigr\|_{L^2}\bigr)\\
&\leq \f14\bigl(\bigl\|\nabla|\curl\vv R_{(1)}|^2\bigr\|_{L^2}^2
+\bigl\||\curl\vv R_{(1)}|\cdot\nabla \curl\vv R_{(1)}\bigr\|_{L^2}^2\bigr)
+\ve^8\|\na\vv u_{(1)}\|_{L^2}^4\|\na\vv u_{(1)}\|_{L^4}^4.
\end{align*}
In view of the bounds \eqref{boundu11} and \eqref{boundu1}, there holds
$$\ve^4\int_0^t\|\nabla\vv u_{(1)}\|_{L^2}^3\|\nabla\vv u_{(1)}\|_{\dH^1}\,dt'
\leq C_0\ve^4,\andf
\ve^8\int_0^t\|\na\vv u_{(1)}\|_{L^2}^4\|\na\vv u_{(1)}\|_{L^4}^4\,dt'
\leq C_0\ve^8.$$
Then a similar procedure as the proof of Proposition \ref{stabH1} leads to
\begin{equation}\label{boundnabladelta1L2L4}
\|\vv R_{(1)}\|_{L^\infty_t(\dH^1)}^2
+\|\nabla\vv R_{(1)}\|_{L^2_t(\dH^1)}^2
\leq C_0\ve^4,\andf
\|\nabla\vv R_{(1)}\|_{L^\infty_t(L^4)}^4\leq C_0\ve^8.
\end{equation}

Now in view of the estimates \eqref{bounddelta1L2}
and \eqref{boundnabladelta1L2L4},
together with the interpolation inequality
$\|f\|_{L^\infty}\lesssim\|\nabla f\|_{L^2}^{\f13}
\|\nabla f\|_{L^4}^{\f23}$, we achieve
\begin{equation}\label{bounddelta1H1}
\|\vv R_{(1)}\|_{L^\infty_t(H^1\cap L^\infty)}^2
+\|\nabla\vv R_{(1)}\|_{L^2_t(H^1)}^2
\leq C_0\ve^4.
\end{equation}

In view of the estimate \eqref{bounddelta1H1} for the remainder
$\vv R_{(1)}$,
and the fact that $\vv u_{(1)}$ determined by \eqref{eqtuve1}
does not rely on $\ve$, we have verified our Ansatz $2$.

\begin{rmk}\label{rmk4.1}
Exactly along the same line, we can prove by induction that
for any $n\geq2$, $\vv u_{(n)}$ is a solution
to the following linearized system:
\begin{equation*}
\left\{
\begin{split}
& \pa_t\vv u_{(n)}+\vv u_{(0)}\cdot\nabla\vv u_{(n)}
+\vv u_{(n)}\cdot\nabla\vv u_{(0)}
-\D\vv u_{(n)}+\nabla P_{(n)}=-\sum_{i=1}^{n-1}
\vv u_{(i)}\cdot\nabla\vv u_{(n-i)},\\
& \dive\vv u_{(n)}=0,\\
&\vv u_{(n)}|_{t=0}=0.
\end{split}
\right.
\end{equation*}
And the remainder term $\vv R_{(n)}
\eqdef\vv u-\vv u_{(0)}-\ve\vv u_{(1)}-\cdots-\ve^n\vv u_{(n)}$ satisfies
$$\|\vv R_{(n)}\|_{L^\infty_t(H^1\cap L^\infty)}
\leq C_0\ve^{n+1}.$$
\end{rmk}

\subsection{Verification of $u^r_{(1),0}=u^z_{(1),0}=0$}
Noticing that $\vv u_{(0)}=\ur_{(0),0}\vv e_r+\uz_{(0),0}\vv e_z$
is axisymmetric without swirl, it is not difficult to deduce from \eqref{eqtuve1}
that $\bigl(u^r_{(1),0},u^z_{(1),0}\bigr)$ satisfies
\beq\label{eq for ur uz 1 0}
\left\{
\begin{aligned}
&\p_t\ur_{(1),0}+\vv u_{(0)}\cdot\na\ur_{(1),0}
+\wt{\vv u}_{(1),0}\cdot\wt\na\ur_{(0),0}-(\D-\f{1}{r^2})\ur_{(1),0}+\p_r P_{(1),0}=0,\\
&\p_t\uz_{(1),0}+\vv u_{(0)}\cdot\na\uz_{(1),0}+\wt{\vv u}_{(1),0}\cdot\wt\na\uz_{(0),0}-\D\uz_{(1),0}+\p_z P_{(1),0}=0,\\
&\pa_r\ur_{(1),0}+\frac 1r\ur_{(1),0}+\pa_z\uz_{(1),0}=0,\\
&\ur_{(1),0}|_{t=0} =\uz_{(1),0}|_{t=0}=0,
\end{aligned}
\right.
\eeq
where $\wt{\vv u}_{(1),0}\eqdef\ur_{(1),0}\vv e_r+\uz_{(1),0}\vv e_z$,
and the divergence-free condition here follows from
\beno\begin{aligned}
\dive\vv u_{(1),0}=&\p_ru^r_{(1),0}+\f{u^r_{(1),0}}{r}+\p_zu^z_{(1),0}\\
=&\cM\Bigl(\p_r\ur_{(1)}+\f{\ur_{(1)}}{r}
+\f{\p_\th\ut_{(1)}}{r}+\pa_z\uz_{(1)}\Bigr)=\cM(\dive\vv u_{(1)})=0.
\end{aligned}\eeno

By taking $L^2$ inner product of \eqref{eq for ur uz 1 0} with $\bigl(u^r_{(1),0},u^z_{(1),0}\bigr)$, we get
\beq\label{H 1}\f12\f{d}{dt}\|\wt{\vv u}_{(1),0}\|_{L^2}^2
+\|\na\wt{\vv u}_{(1),0}\|_{L^2}^2
=-\bigl(\wt{\vv u}_{(1),0}\cdot\wt\na\ur_{(0),0}\,|\,u^r_{(1),0}\bigr)
-\bigl(\wt{\vv u}_{(1),0}\cdot\wt\na\uz_{(0),0}\,|\,u^z_{(1),0}\bigr),
\eeq
where we used the fact that $\f{\p_\th}{r}P_{(1),0}(t,r,z)=0$ and then the following equality
\beno
-\bigl(\wt{\na} P_{(1),0}\,|\,\wt{\vv u}_{(1),0}\bigr)
=-\bigl(\na P_{(1),0}\,|\,\vv u_{(1),0}\bigr)=\bigl(P_{(1),0}\,|\,\dive\vv u_{(1),0}\bigr)=0.
\eeno

For the terms on the right-hand side of \eqref{H 1}, we have
\beno\begin{aligned}
\bigl|\bigl(\wt{\vv u}_{(1),0}\cdot\na\ur_{(0),0}\,|\,u^r_{(1),0}\bigr)\bigr|
+\bigl|\bigl(\wt{\vv u}_{(1),0}\cdot&\na\uz_{(0),0}\,|\,u^z_{(1),0}\bigr)\bigr|
\leq C\|\na\vv u_{(0)}\|_{L^2}\|\wt{\vv u}_{(1),0}\|_{L^4}^2\\
&\leq C\|\na\vv u_{(0)}\|_{L^2}
\|\wt{\vv u}_{(1),0}\|_{L^2}^{\f12}\|\na\wt{\vv u}_{(1),0}\|_{L^2}^{\f32}\\
&\leq\f12\|\na\wt{\vv u}_{(1),0}\|_{L^2}^2
+C\|\wt{\vv u}_{(1),0}\|_{L^2}^2\|\na\vv u_{(0)}\|_{L^2}^4.
\end{aligned}\eeno
As a result, we deduce
$$\f{d}{dt}\|\wt{\vv u}_{(1),0}\|_{L^2}^2
+\|\na\wt{\vv u}_{(1),0}\|_{L^2}^2
\leq C\|\wt{\vv u}_{(1),0}\|_{L^2}^2\|\na\vv u_{(0)}\|_{L^2}^4.$$
Then by applying Gronwall's inequality, together with the fact that
initially $\wt{\vv u}_{(1),0}|_{t=0}=0$, we deduce that
$\wt{\vv u}_{(1),0}$ actually vanishes for all time.
This completes the proof of Theorem \ref{thm2}.

\section{The proof of Theorem \ref{thm3}}\label{sec5}

As the initial data \eqref{inF} considered here is a special case of \eqref{initialFourier},
Theorem \ref{thm1} guarantees the existence of a unique global solution
$\vv u\in C(\R_+;H^1)\cap L^2(\R_+;\dH^1\cap \dH^2)$ for sufficiently small $\ve$.
And this solution can be expanded as \eqref{1.18}.
Hence the aim in the following
is to show that $\ur$ and $\uz$ remain even in $\th$,
while $\ut$ keeps odd in $\th$ for all time, namely
$$\vv u_{(1),0}=0,\quad\ut_{(j),0}=0,\quad\vr_{(j),k}=\ut_{(j),k}=\vz_{(j),k}=0,\quad\forall\
j\in\N,~ k\in\N.$$

\subsection{Verification of $\vv u_{(1),0}=\vv 0$}
For the initial data $\vv u(0,x)$ given by \eqref{inF}, we have
\beno
\cM(\vv u|_{t=0})=a^r_{0}(r,z)\vv e_r+a^z_{0}(r,z)\vv e_z.
\eeno
Thus the quantity $\bar{\vv u}$ defined by \eqref{eqtbaru},
is exactly $\ur_{(0),0}\vv e_r+\uz_{(0),0}\vv e_z$,
which sloves the axisymmetric without swirl N-S \eqref {deteu0} for the case here.

On the other hand, noticing that $\ut|_{t=0}\thicksim\ve$ and
$$\pa_\th\ur|_{t=0}=-\ve\sum_{k=1}^\infty
k a^r_{k}\sin k\th,\quad
\pa_\th\ut|_{t=0}=\ve\sum_{k=1}^\infty
k b^\th_{k}\cos k\th,\quad
\pa_\th\uz|_{t=0}=-\ve\sum_{k=1}^\infty
k a^z_{k}\sin k\th,$$
we can get by using the refined estimate \eqref{error estimate 2}
in Theorem \ref{thm1} that
$$\|\cM(\vv u)-\ur_{(0),0}\vv e_r-\uz_{(0),0}\vv e_z
\|_{L^\infty(\R^+;L^2)}\leq C_0\ve^2.$$
This together with the fact that
$\cM(\vv u)=\vv u_{(0),0}+\ve\vv u_{(1),0}+\ve^2\vv u_{(2),0}+\cdots$ implies
\beq\label{S 3}
\vv u_{(0),0}=\ur_{(0),0}\vv e_r+\uz_{(0),0}\vv e_z,\andf
\vv u_{(1),0}=0.
\eeq

\subsection{Verification of $\vr_{(1),k}=\ut_{(1),k}=\vz_{(1),k}=0$}\label{sub5.2}
In view of \eqref{S 3}, we can write
\begin{equation}\label{5.2}
\vv u_{(1)}=\sum_{k=1}^\infty
\Bigl(\vv u_{(1),k}\cos k\th+\vv v_{(1),k}\sin k\th\Bigr),
\end{equation}
and the corresponding pressure
$$P_{(1)}=\sum_{k=1}^\infty
\Bigl(P_{(1),k}\cos k\th+Q_{(1),k}\sin k\th\Bigr).$$
And from the proof of Subsection \ref{sub4.2}, we know that
$(\vv u_{(1)},P_{(1)})$ satisfies \eqref{eqtuve1}.

Then it is crucial to notice that $\vv u_{(0)}$ is axisymmetric without swirl,
which makes the couplings between each component of $\vv u_{(1)}$
in $(4.8)$ not so strong.
In fact, by rewriting \eqref{eqtuve1} according to
the basis $\{\cos k\th,\sin k\th:k\in\N\}$, it is not difficult to find that
we can decompose $\vv u_{(1)}$ and $P_{(1)}$ into two parts:
$\vv u_{(1)}^\prime+\vv u_{(1)}^{\prime\prime}$ and
$P_{(1)}^{\prime}+P_{(1)}^{\prime\prime}$ respectively, where
$$\vv u_{(1)}^{\prime}\eqdef\sum_{k=1}^\infty\Bigl(\ur_{(1),k}\cos k\th\vv e_r
+\vt_{(1),k}\sin k\th\vv e_\th
+\uz_{(1),k}\cos k\th\vv e_z\Bigr),\quad
P_{(1)}^{\prime}\eqdef\sum_{k=1}^\infty P_{(1),k}\cos k\th,$$
and
$$\vv u_{(1)}^{\prime\prime}\eqdef\sum_{k=1}^\infty\Bigl(\vr_{(1),k}\sin k\th\vv e_r
+\ut_{(1),k}\cos k\th\vv e_\th
+\vz_{(1),k}\sin k\th\vv e_z\Bigr),\quad
P_{(1)}^{\prime\prime}\eqdef\sum_{k=1}^\infty Q_{(1),k}\sin k\th,$$
such that $\vv u_{(1)}^\prime$ and $P_{(1)}^{\prime}$ satisfy
the following self-contained system:
\begin{equation}\label{deteu1}
\left\{
\begin{aligned}
&\pa_t\ur_{(1),k}+(\ur_{(0),0}\pa_r+\uz_{(0),0}\pa_z)\ur_{(1),k}
+(\ur_{(1),k}\pa_r+\uz_{(1),k}\pa_z)\ur_{(0),0}\\
&\qquad\qquad\qquad
-(\pa_r^2+\pa_z^2+\frac 1r\pa_r-\frac{1+k^2}{r^2})\ur_{(1),k}
+2k\f{\vt_{(1),k}}{r^2}+\pa_r P_{(1),k}=0,\\
&\pa_t\vt_{(1),k}+(\ur_{(0),0}\pa_r+\uz_{(0),0}\pa_z)\vt_{(1),k}+\f{\ur_{(0),0}\vt_{(1),k}}r\\
&\qquad\qquad\qquad
-(\pa_r^2+\pa_z^2+\frac 1r\pa_r-\frac{1+k^2}{r^2})\vt_{(1),k}
+2k\f{\ur_{(1),k}}{r^2}-\f{k}r P_{(1),k}=0,\\
&\pa_t\uz_{(1),k}+(\ur_{(0),0}\pa_r+\uz_{(0),0}\pa_z)\uz_{(1),k}
+(\ur_{(1),k}\pa_r+\uz_{(1),k}\pa_z)\uz_{(0),0}\\
&\qquad\qquad\qquad
-(\pa_r^2+\pa_z^2+\frac 1r\pa_r-\frac{k^2}{r^2})\uz_{(1),k}+\pa_z P_{(1),k}=0,\\
& \pa_r\ur_{(1),k}+\frac 1r\ur_{(1),k}+\pa_z\uz_{(1),k}
+\f{k}r\vt_{(1),k}=0,\\
& \ur_{(1),k}|_{t=0}=a^r_{k},\quad
\vt_{(1),k}|_{t=0}=b^\th_{k},\quad
\uz_{(1),k}|_{t=0}=a^z_{k},
\end{aligned}
\right.
\end{equation}
while $\vv u_{(1)}^{\prime\prime}$ and $P_{(1)}^{\prime\prime}$ satisfy
the following self-contained system:
\begin{equation}\label{deteu12}
\left\{
\begin{aligned}
&\pa_t\vr_{(1),k}+(\ur_{(0),0}\pa_r+\uz_{(0),0}\pa_z)\vr_{(1),k}
+(\vr_{(1),k}\pa_r+\vz_{(1),k}\pa_z)\ur_{(0),0}\\
&\qquad\qquad\qquad
-(\pa_r^2+\pa_z^2+\frac 1r\pa_r-\frac{1+k^2}{r^2})\vr_{(1),k}
-2k\f{\ut_{(1),k}}{r^2}+\pa_r Q_{(1),k}=0,\\
&\pa_t\ut_{(1),k}+(\ur_{(0),0}\pa_r+\uz_{(0),0}\pa_z)\ut_{(1),k}+\f{\ur_{(0),0}\ut_{(1),k}}r\\
&\qquad\qquad\qquad
-(\pa_r^2+\pa_z^2+\frac 1r\pa_r-\frac{1+k^2}{r^2})\ut_{(1),k}
-2k\f{\vr_{(1),k}}{r^2}+\f{k}r Q_{(1),k}=0,\\
&\pa_t\vz_{(1),k}+(\ur_{(0),0}\pa_r+\uz_{(0),0}\pa_z)\vz_{(1),k}
+(\vr_{(1),k}\pa_r+\vz_{(1),k}\pa_z)\uz_{(0),0}\\
&\qquad\qquad\qquad
-(\pa_r^2+\pa_z^2+\frac 1r\pa_r-\frac{k^2}{r^2})\vz_{(1),k}+\pa_z Q_{(1),k}=0,\\
& \pa_r\vr_{(1),k}+\frac 1r\vr_{(1),k}+\pa_z\vz_{(1),k}
-\f{k}r\ut_{(1),k}=0,\\
& \vr_{(1),k}|_{t=0}=0,\quad\ut_{(1),k}|_{t=0}=0,\quad\vz_{(1),k}|_{t=0}=0.
\end{aligned}
\right.
\end{equation}

Then by
taking $L^2$ inner product of \eqref{deteu12} with
$(\vr_{(1),k},\ut_{(1),k},\vz_{(1),k})$, we obtain
\begin{equation}\begin{aligned}\label{5.4}
&\f12\f{d}{dt}\|(\vr_{(1),k},\ut_{(1),k},\vz_{(1),k})\|_{L^2}^2
+\|\nabla(\vr_{(1),k},\ut_{(1),k},\vz_{(1),k})\|_{L^2}^2
+k^2\bigl\|\f{\vz_{(1),k}}r\bigr\|_{L^2}^2\\
&\quad+(1+k^2)\bigl(\bigl\|\f{\vr_{(1),k}}r\bigr\|_{L^2}^2
+\bigl\|\f{\ut_{(1),k}}r\bigr\|_{L^2}^2\bigr)
=\int_{\R^3}\Bigl(4k\f{\ut_{(1),k}\vr_{(1),k}}{r^2}
-\f{\ur_{(0),0}|\ut_{(1),k}|^2}r\\
&\qquad-(\vr_{(1),k}\pa_r+\vz_{(1),k}\pa_z)\ur_{(0),0}\cdot\vr_{(1),k}
-(\vr_{(1),k}\pa_r+\vz_{(1),k}\pa_z)\uz_{(0),0}\cdot\vz_{(1),k}\Bigr)\,dx.
\end{aligned}\end{equation}
It is worth mentioning that, not the same as the most cases
in doing estimates in PDE,
the constant in this inequality is very important. Precisely, we have
\begin{align*}
4k\Bigl|\int_{\R^3}\f{\ut_{(1),k}\vr_{(1),k}}{r^2}\,dx\Bigr|
&\leq4k\bigl\|\f{\vr_{(1),k}}r\bigr\|_{L^2}\bigl\|\f{\ut_{(1),k}}r\bigr\|_{L^2}\\
&\leq2k\bigl(\bigl\|\f{\vr_{(1),k}}r\bigr\|_{L^2}^2
+\bigl\|\f{\ut_{(1),k}}r\bigr\|_{L^2}^2\bigr)
\leq(1+k^2)\bigl(\bigl\|\f{\vr_{(1),k}}r\bigr\|_{L^2}^2
+\bigl\|\f{\ut_{(1),k}}r\bigr\|_{L^2}^2\bigr).
\end{align*}
On the other hand, by using Sobolev's embedding theorem and Young's inequality,
we get
\begin{align*}
&\Bigl|\int_{\R^3}\Bigl((\vr_{(1),k}\pa_r+\vz_{(1),k}\pa_z)\ur_{(0),0}\cdot\vr_{(1),k}
+\f{\ur_{(0),0}|\ut_{(1),k}|^2}r+(\vr_{(1),k}\pa_r+\vz_{(1),k}\pa_z)
\uz_{(0),0}\cdot\vz_{(1),k}\Bigr)\,dx\Bigr|\\
&\leq\bigl(\|\vr_{(1),k}\|_{L^4}^2+\|\vz_{(1),k}\|_{L^4}^2\bigr)
\bigl(\|\wt\nabla\ur_{(0),0}\|_{L^2}+\|\wt\nabla\uz_{(0),0}\|_{L^2}\bigr)
+\|\ut_{(1),k}\|_{L^4}^2\bigl\|\f{\ur_{(0),0}}r\bigr\|_{L^2}\\
&\leq\f12\|\nabla(\vr_{(1),k},\ut_{(1),k},\vz_{(1),k})\|_{L^2}^2
+C\|(\vr_{(1),k},\ut_{(1),k},\vz_{(1),k})\|_{L^2}^2
\|\nabla\vv u_{(0)}\|_{L^2}^4,
\end{align*}
where in the last step we have used the fact that
$\vv u_{(0)}=\ur_{(0),0}\vv e_r+\uz_{(0),0}\vv e_z$, so that
$$\|\nabla\vv u_{(0)}\|_{L^2}
\sim\|\wt\nabla\ur_{(0),0}\|_{L^2}+\|\wt\nabla\uz_{(0),0}\|_{L^2}
+\bigl\|\f{\ur_{(0),0}}r\bigr\|_{L^2}.$$
By substituting the above estimates into \eqref{5.4}, we deduce
$$\f{d}{dt}\|(\vr_{(1),k},\ut_{(1),k},\vz_{(1),k})\|_{L^2}^2
+\|\nabla(\vr_{(1),k},\ut_{(1),k},\vz_{(1),k})\|_{L^2}^2
\leq C\|(\vr_{(1),k},\ut_{(1),k},\vz_{(1),k})\|_{L^2}^2
\|\nabla\vv u_{(0)}\|_{L^2}^4.$$
Then by applying Gronwall's inequality, and using the fact that
$$\int_0^\infty\|\nabla\vv u_{(0)}(t')\|_{L^2}^4\,dt'
\leq\|\nabla\vv u_{(0)}\|_{L^\infty(\R^+;L^2)}^2
\|\nabla\vv u_{(0)}\|_{L^2(\R^+;L^2)}^2<\infty,$$
which is guaranteed by \eqref{boundu0},
and initially $\vr_{(1),k}|_{t=0}=\ut_{(1),k}|_{t=0}=\vz_{(1),k}|_{t=0}=0$,
we obtain that
\beno
\vr_{(1),k}=\ut_{(1),k}=\vz_{(1),k}=0,\quad\forall\ t>0.
\eeno
 In another word,
we can further reduce the expansion for $\vv u_{(1)}$
in \eqref{5.2} to be
\begin{equation}\label{expuvereduce}
\vv u_{(1)}=\sum_{k=1}^\infty\Bigl(\ur_{(1),k}\cos k\th\vv e_r
+\vt_{(1),k}\sin k\th\vv e_\th
+\uz_{(1),k}\cos k\th\vv e_z\Bigr),
\end{equation}
where $\ur_{(1),k},~\vt_{(1),k}$ and $\uz_{(1),k}$ are determined by
\eqref{deteu1}.

\subsection{Verification of $\ut_{(j),0}=\vr_{(j),k}=\ut_{(j),k}=\vz_{(j),k}=0$, for any $
j\in\N,~ k\in\N$}  {We shall prove this result by the induction method.}
Assume that for any $n\geq2$ and any $1\leq j\leq n-1$,
\beno
\ut_{(j),0}=\vr_{(j),k}=\ut_{(j),k}=\vz_{(j),k}=0,\quad\forall\ k\in\N,
\eeno
i.e.,
$\vv u_{(j)}$ has the following form
\begin{equation}\label{5.7}
\vv u_{(j)}=\ur_{(j),0}\vv e_r+\uz_{(j),0}\vv e_z
+\sum_{k=1}^\infty\Bigl(\ur_{(j),k}\cos k\th\vv e_r
+\vt_{(j),k}\sin k\th\vv e_\th
+\uz_{(j),k}\cos k\th\vv e_z\Bigr),
\end{equation}
then our aim is to show that $\vv u_{(n)}$ can also be written as this form, i.e.
\beno
\ut_{(n),0}=\vr_{(n),k}=\ut_{(n),k}=\vz_{(n),k}=0,\quad\forall\ k\in\N,
\eeno

Indeed, due to Remark \ref{rmk4.1}, we know that in the Euclidean coordinates,
$\vv u_{(n)}$ satisfies
\begin{equation}\label{eqtuve2}
\left\{
\begin{aligned}
&\pa_t\vv u_{(n)}+\vv u_{(0)}\cdot\nabla\vv u_{(n)}
+\vv u_{(n)}\cdot\nabla\vv u_{(0)}
-\D\vv u_{(n)}+\nabla P_{(n)}=-\sum_{j=1}^{n-1}
\vv u_{(j)}\cdot\nabla\vv u_{(n-j)},\\
& \dive\vv u_{(n)}=0,\\
&\vv u_{(n)}|_{t=0}=0.
\end{aligned}
\right.
\end{equation}
This system has zero-valued initial data. However, due to the external force term
$-\sum_{j=1}^{n-1}\vv u_{(j)}\cdot\nabla\vv u_{(n-j)}$,
the solution $\vv u_{(n)}$ in general does not vanish.

Noticing that $\vv u_{(j)}$ has the form \eqref{5.7} for $1\leq j\leq n-1$,
we can get
\begin{equation}\begin{split}\label{5.9}
\vv u_{(j)}\cdot\na\vv u_{(n-j)}=&\sum_{k_1,k_2=0}^\infty
\Bigl\{(\ur_{(j),k_1}\p_r+\uz_{(j),k_1}\p_z)
\bigl(\ur_{(n-j),k_2}\vv e_r+\uz_{(n-j),k_2}\vv e_z\bigr)
\cdot\cos k_1\th\cdot\cos k_2\th\\
&\quad+(\ur_{(j),k_1}\p_r+\uz_{(j),k}\p_z)\vt_{(n-j),k_2}\vv e_\th
\cdot\cos k_1\th\cdot\sin k_2\th\\
&\quad+\f{\vt_{(j),k_1}\ur_{(n-j),k_2}}{r}
\bigl(-\vv e_r k_2\sin k_1\th\cdot\sin k_2\th
+\vv e_\th\sin k_1\th\cdot\cos k_2\th\bigr)\\
&\quad+\f{\vt_{(j),k_1}\vt_{(n-j),k_2}}{r}\bigl(\vv e_\th k_2\sin k_1\th\cdot\cos k_2\th
-\vv e_r\sin k_1\th\cdot\sin k_2\th\bigr)\Bigr\}.
\end{split}\end{equation}
In particular, we can see that $(\vv u_{(j)}\cdot\na\vv u_{(n-j)})\cdot\vv e_r$
and $(\vv u_{(j)}\cdot\na\vv u_{(n-j)})\cdot\vv e_z$ is even in $\th$,
while $(\vv u_{(j)}\cdot\na\vv u_{(n-j)})\cdot\vv e_\th$ is odd in $\th$.
As a result, this external force
$-\sum_{j=1}^{n-1}\vv u_{(j)}\cdot\nabla\vv u_{(n-j)}$
does not appear in the equations for
$(\vr_{(n),k},\ut_{(n),k},\vz_{(n),k})$ for $k\geq 1$. Thus the equations for
$(\vr_{(n),k},\ut_{(n),k},\vz_{(n),k})$ are exactly the same as that
for $(\vr_{(1),k},\ut_{(1),k},\vz_{(1),k})$ (see \eqref{deteu12}) stated as follows:
\begin{equation*}
\left\{
\begin{aligned}
&\pa_t\vr_{(n),k}+(\ur_{(0),0}\pa_r+\uz_{(0),0}\pa_z)\vr_{(n),k}
+(\vr_{(n),k}\pa_r+\vz_{(n),k}\pa_z)\ur_{(0),0}\\
&\qquad\qquad\qquad
-(\pa_r^2+\pa_z^2+\frac 1r\pa_r-\frac{1+k^2}{r^2})\vr_{(n),k}
-2k\f{\ut_{(n),k}}{r^2}+\pa_r Q_{(n),k}=0,\\
&\pa_t\ut_{(n),k}+(\ur_{(0),0}\pa_r+\uz_{(0),0}\pa_z)\ut_{(n),k}+\f{\ur_{(0),0}\ut_{(n),k}}r\\
&\qquad\qquad\qquad
-(\pa_r^2+\pa_z^2+\frac 1r\pa_r-\frac{1+k^2}{r^2})\ut_{(n),k}
-2k\f{\vr_{(n),k}}{r^2}+\f{k}r Q_{(n),k}=0,\\
&\pa_t\vz_{(n),k}+(\ur_{(0),0}\pa_r+\uz_{(0),0}\pa_z)\vz_{(n),k}
+(\vr_{(n),k}\pa_r+\vz_{(n),k}\pa_z)\uz_{(0),0}\\
&\qquad\qquad\qquad
-(\pa_r^2+\pa_z^2+\frac 1r\pa_r-\frac{k^2}{r^2})\vz_{(n),k}+\pa_z Q_{(n),k}=0,\\
& \pa_r\vr_{(n),k}+\frac 1r\vr_{(n),k}+\pa_z\vz_{(n),k}
-\f{k}r\ut_{(n),k}=0,\\
& \vr_{(n),k}|_{t=0}=0,\quad\ut_{(n),k}|_{t=0}=0,\quad\vz_{(n),k}|_{t=0}=0.
\end{aligned}
\right.
\end{equation*}
Then the same procedure as what we have done at the end of Subsection \ref{sub5.2}
shows that for $k\geq 1$, $(\vr_{(n),k},\ut_{(n),k},\vz_{(n),k})$
indeed vanishes for all time, i.e.
\beno
\vr_{(n),k}=\ut_{(n),k}=\vz_{(n),k}=0,\quad\forall\ t>0.
\eeno

Let us turn to study the $0$-th Fourier coefficients
$\ur_{(n),0}\vv e_r+\ut_{(n),0}\vv e_\th+\uz_{(n),0}\vv e_z$.
In view of the expression \eqref{5.9}, and the following identities
\begin{align*}
&2\cos k_1\th\cdot\cos k_2\th=\cos (k_1+k_2)\th+\cos (k_1-k_2)\th,\\
&2\sin k_1\th\cdot\sin k_2\th=\cos (k_1-k_2)\th-\cos (k_1+k_2)\th,\\
&2\sin k_1\th\cdot\cos k_2\th=\sin (k_1+k_2)\th+\sin (k_1-k_2)\th,
\end{align*}
and the fact that $\vv u_{(0)}=\ur_{(0),0}\vv e_r+\uz_{(0),0}\vv e_z$
is axisymmetric without swirl, we can obtain
\beq\label{H 2}
\pa_t\ut_{(n),0}+\vv u_{(0)}\cdot\nabla\ut_{(n),0}
+\f{\ur_{(0),0}\ut_{(n),0}}r
-(\D-\frac{1}{r^2})\ut_{(n),0}=0,\quad\ut_{(n),0}|_{t=0}=0,
\eeq
and
\begin{equation}\label{5.10}
\left\{
\begin{aligned}
&\pa_t\ur_{(n),0}+\vv u_{(0)}\cdot\nabla\ur_{(n),0}
+(\ur_{(n),0}\pa_r+\uz_{(n),0}\pa_z)\ur_{(0),0}\\
&\qquad\qquad\qquad\qquad\qquad\qquad-(\D-\frac{1}{r^2})\ur_{(n),0}
+\pa_r P_{(n),0}=F^r_{(n),0},\\
&\pa_t\uz_{(n),0}+\vv u_{(0)}\cdot\nabla\uz_{(n),0}
+(\ur_{(n),0}\pa_r+\uz_{(n),0}\pa_z)\uz_{(0),0}
-\D\uz_{(n),0}+\pa_z P_{(n),0}=F^z_{(n),0},\\
& \pa_r\ur_{(n),0}+\frac 1r\ur_{(n),0}+\pa_z\uz_{(n),0}=0,\\
& \ur_{(n),0}|_{t=0}=0,\quad\uz_{(n),0}|_{t=0}=0,
\end{aligned}
\right.
\end{equation}
where $F^r_{(n),0},~F^z_{(n),0}$ are the external force terms given by
\begin{align*}
F^r_{(n),0}=-\f12&\sum_{k=1}^\infty\sum_{j=1}^{n-1}
\bigl(\ur_{(j),k}\pa_r+\uz_{(j),k}\pa_z\bigr)\ur_{(n-j),k}
+\f12\sum_{k=1}^\infty\sum_{j=1}^{n-1}\f{\vt_{(j),k}\vt_{(n-j),k}}r,\\
&F^z_{(n),0}=-\f12\sum_{k=1}^\infty\sum_{j=1}^{n-1}\bigl(\ur_{(j),k}\pa_r
+\uz_{(j),k}\pa_z\bigr)\uz_{(n-j),k}.
\end{align*}
In particular, this reflects the fact that if two profiles have same frequency,
then their product would contribute to the average of $\vv u$ in $\th$ variable.

Taking $L^2$ inner product of \eqref{H 2} with $\ut_{(n),0}$, it is easy to get that
\beno
\ut_{(n),0}=0,\quad\forall\ t>0.
\eeno

Therefore, $\vv u_{(n)}$ can also be written in the following form:
$$\vv u_{(n)}=\ur_{(n),0}\vv e_r+\uz_{(n),0}\vv e_z
+\sum_{k=1}^\infty\Bigl(\ur_{(n),k}\cos k\th\vv e_r
+\vt_{(n),k}\sin k\th\vv e_\th
+\uz_{(n),k}\cos k\th\vv e_z\Bigr).$$
This completes the proof of Theorem \ref{thm3} by induction.

\medskip

\noindent {\bf Acknowledgments.}
Y. Liu is supported by NSF of China under grant 12101053,
and the Fundamental Research Funds
for the Central Universities under Grant 310421118.
L. Xu is supported by NSF of China under grant 11671383 and 12171019.

\end{document}